\documentclass[10pt]{amsart}
\usepackage{amsfonts}
\usepackage{array}
\usepackage{tabularx}
\usepackage{arydshln}
\usepackage{amsmath}

\usepackage{amssymb}
\usepackage{physics}
\usepackage{graphicx}
\usepackage{caption}
\usepackage{subcaption}
\usepackage{multirow}
\usepackage{footmisc}
\usepackage{xcolor}

\providecommand{\U}[1]{\protect\rule{.1in}{.1in}}

\usepackage{xcolor}
\usepackage{latexsym}
\usepackage{amsmath}
\usepackage{amssymb}
\usepackage{mathrsfs}
\usepackage{graphicx}
\usepackage{color}
\usepackage{pgfpages}
\usepackage{ifthen}
\usepackage{leftidx,tensor}
\usepackage[T1]{fontenc}
\usepackage[latin1]{inputenc}
\usepackage{mathtools}
\usepackage{comment}
\usepackage{dsfont}
\usepackage[nocompress]{cite}
\usepackage[shortlabels]{enumitem}
\usepackage{aliascnt}
\usepackage[bookmarks=true,pdfstartview=FitH, pdfborder={0 0 0}, colorlinks=true,citecolor=red, linkcolor=blue]{hyperref}
\makeatother
\usepackage{nicefrac}
\newtheorem{thm}{Theorem}[section]
\newaliascnt{cor}{thm}
\newaliascnt{prop}{thm}
\newaliascnt{lem}{thm}
\newtheorem{cor}[cor]{Corollary}
\newtheorem{prop}[prop]{Proposition}
\newtheorem{lem}[lem]{Lemma}
\aliascntresetthe{cor}
\aliascntresetthe{prop}
\aliascntresetthe{lem}

\newaliascnt{defn}{thm}
\newaliascnt{asu}{thm}
\newaliascnt{con}{thm}
\newtheorem{defn}[defn]{Definition}
\newtheorem{asu}[asu]{Assumption}

\aliascntresetthe{defn}
\aliascntresetthe{asu}
\aliascntresetthe{con}

\newcounter{stp}
\newcounter{stpi}
\newcounter{stpci}
\newcounter{stpiii}

\newaliascnt{rem}{thm}
\newaliascnt{exa}{thm}
\newaliascnt{masu}{thm}
\newaliascnt{nota}{thm}
\newaliascnt{sett}{thm}
\newtheorem{rem}[rem]{Remark}

\aliascntresetthe{rem}
\aliascntresetthe{exa}
\aliascntresetthe{masu}
\aliascntresetthe{nota}
\aliascntresetthe{sett}
\numberwithin{equation}{section}

\setlist[enumerate]{font = \normalfont}

\renewcommand {\S}	{\mathbb{S}}

\newcommand {\R}	{\mathbb{R}}

\renewcommand{\d}{\, \mathrm{d}}

\renewcommand{\H}{\mathrm{H}}

	\renewcommand{\phi}{\varphi}
	
	\renewcommand{\bar}[1]{\overline{#1}}

	\renewcommand{\div}{\mathrm{div} \, }

	\newcommand{\rC}{\mathrm{C}}
	\newcommand{\rL}{\mathrm{L}}
	\newcommand{\rW}{\mathrm{W}}
	\newcommand{\rH}{\H}

\title[Variational Structure of EBMs with Heterogeneous Insolation]{Variational Structure of Energy-Balance Climate Models with Heterogeneous Insolation}

\author{Gianmarco Del Sarto}
\address{Technische Universit\"{a}t Darmstadt\\
Fachbereich Mathematik\\
	Schlossgartenstr.\ 7\\
	64289 Darmstadt\\
	Germany}
\email{delsarto@mathematik.tu-darmstadt.de}

\author{Giuseppe Floridia}
\address{Sapienza Universit\`a di Roma\\
Dipartimento di Scienze di Base e Applicate per l'Ingegneria\\
	Via Antonio Scarpa 16\\
	00161 Roma\\
	Italy}
\email{giuseppe.floridia@uniroma1.it}
\begin{document}

\maketitle

\begin{abstract}
We analyse stationary Sellers-type energy-balance climate models with heterogeneous insolation $Q_\varepsilon = Q_*+\varepsilon Q_1$, both on the interval $(-1,1)$ with degenerate diffusion $\kappa(x)=1-x^2$ and on $\mathbb{S}^2$ with uniformly elliptic diffusion. Assuming that the homogeneous model corresponding to $\varepsilon = 0$ admits three constant equilibria $\theta_S <\theta_M < \theta_W$ satisfying suitable non-degeneracy and spectral conditions, we prove that, for sufficiently small $\vert \varepsilon \vert $, these equilibria continue to three stationary branches $ u_S^\varepsilon$, $u_M^\varepsilon$, and $u_W^\varepsilon$.
The cold and warm branches, $u_S^\varepsilon$ and $u_W^\varepsilon$ respectively, are strict local minimisers of the associated
energy functional, while the intermediate branch $u_M^\varepsilon$ is non-degenerate and has
Morse index one. We further establish the Palais-Smale condition and obtain
a mountain-pass critical point $u^\varepsilon_{MP}$ at the minimax level between the two local minima. Under an
additional isolation assumption for the homogeneous problem at the minimax
level, this critical point coincides with $u_M^\varepsilon$. Lastly, a
Legendre-Galerkin discretisation of the one-dimensional model numerically
illustrates the three stationary branches, their discrete Morse indices, and
the corresponding energy landscape.
\end{abstract}

\tableofcontents

\section{Introduction and main results}
\subsection{Motivation and modelling background}
\noindent
Energy-balance models (EBMs) are a class of reduced climate models in which
the Earth's temperature evolves according to the balance between the radiation
absorbed and emitted by the planet. They were introduced independently in the late 1960s by Budyko and Sellers \cite{Budyko1969,Sellers1969}, and have been widely used for their ability to reproduce important qualitative features of the
Earth's climate, see \cite{North1981}. A classical feature of EBMs is the
possible coexistence of two different stable climate states: a warm state, very similar to the present climate of the Earth, and a \emph{Snowball Earth} state in which ice sheets extend over
all the planetary surface. These two states are separated by an
unstable climate
\cite{North1975,LucariniGhil2020}.
This bistable picture is further motivated by palaeoclimatic evidence for
severe low-latitude glaciations during the Neoproterozoic
\cite{Hoffmann1998,Hoffmann2002}.

On a spatial domain $\mathcal{M}$, an EBM for
the unknown Earth's absolute temperature $u=u(t,\xi)$ takes the form
\begin{equation}
\partial_t u =\div _{\mathcal{M}} (\kappa \nabla_{\mathcal{M}}u ) + Q_\varepsilon(\xi)\beta_{\mathrm{ph}}(u) +q
-\mathfrak e\, u|u|^3,
\qquad \xi \in\mathcal{M}.
\label{eq:introduction-parabolic-ebm}
\end{equation}
Here $\kappa$ is an effective heat-transport coefficient, $Q_\varepsilon$ is the
incoming solar radiation, and $\beta_{\mathrm{ph}}$ is the co-albedo, that is the fraction of the incoming radiation that is absorbed. In the terminology adopted throughout this paper, we consider a
Sellers-type EBM: the physical co-albedo
$\beta_{\mathrm{ph}}$ is assumed to be continuously differentiable, as specified below. This contrasts with
Budyko-type models, where $\beta_{\mathrm{ph}}$ is modelled by a discontinuous function. The term $q$ is an additive effective radiative forcing, which describes in particular the greenhouse-gas effect in the radiation balance, see \cite{NorthKim2017,Bastiaansen2022}. Finally, $\mathfrak e\,u|u|^3$ is the outgoing long-wave radiative
flux prescribed by the Stefan-Boltzmann law, where $
\mathfrak e :=\varepsilon_{\mathrm{em}}\sigma_{\mathrm{SB}}$ is the effective Stefan-Boltzmann coefficient, obtained as the product of
the emissivity $\varepsilon_{\mathrm{em}}$ and the Stefan-Boltzmann
constant $\sigma_{\mathrm{SB}}$. In this paper, the insolation is decomposed as
$$
Q_\varepsilon(\xi) =Q_*+\varepsilon Q_1(\xi).
$$
The constant $Q_*>0$ represents a spatially homogeneous reference
insolation, whereas $Q_1$ describes its spatial variation. The parameter $\varepsilon \in \R$ controls the amplitude of the heterogeneity.

A time-independent solution of \eqref{eq:introduction-parabolic-ebm}
represents a stationary climate configuration. Such a profile solves the
elliptic equation
\begin{equation}
\div_{\mathcal M}(\kappa\nabla_{\mathcal{M}}u ) + Q_\varepsilon(\xi)\beta_{\mathrm{ph}}(u)
+q - \mathfrak{e}\,u|u|^3= 0.
\label{eq:introduction-stationary-ebm}
\end{equation}
The previous stationary equation has a natural variational structure. Let
$B$ be a primitive of a suitable extension of the physical co-albedo to $\mathbb{R}$, introduced below. Define
\begin{equation}
G_\varepsilon^{\mathcal M}(u) = \int_{\mathcal M} \left( \mathfrak{e}\frac{|u|^5}{5} - Q_\varepsilon B(u) - qu \right) \d \mu_{\mathcal M} + \frac{1}{2}
\int_{\mathcal M} \kappa| \nabla_{\mathcal M}u |^2 \d \mu_{\mathcal M},
\label{eq:introduction-ebm-energy}
\end{equation}
where $d \mu_{\mathcal{M}}$ denotes the Riemannian volume measure on $\mathcal{M}.$ The first integral is the local radiative potential, while the second
penalises spatial temperature variations and represents the energetic cost of
heat transport. The Euler-Lagrange equation associated with
\eqref{eq:introduction-ebm-energy} is precisely \eqref{eq:introduction-stationary-ebm}. Thus, stationary climate configurations are critical points of the energy functional $G_{\varepsilon}^{\mathcal{M}}$, and the energy landscape provides a natural variational classification of these equilibria. Strict local minimisers represent variationally stable climate states, while saddle points represent unstable climate states. This variational viewpoint for
one-dimensional EBMs, and its relation to the time-dependent model, is
developed in \cite{North1979variational,DelSarto2024}.

The aim of this paper is to establish the existence and variational structure
of multiple stationary solutions of
\eqref{eq:introduction-stationary-ebm}. We consider the two most common types of EBM: a one-dimensional zonally averaged model, in which
$\mathcal{M}$ is an interval parametrising latitude, and a two-dimensional
model posed on the full sphere $\mathbb{S}^2$. Particular emphasis is placed
on spatially heterogeneous EBMs, for which the insolation $Q_\varepsilon(\xi)$ depends explicitly on position. Our objective is to
give a rigorous variational analysis of the persistence, under small spatial
heterogeneities, of the classical configuration consisting of cold, warm,
and intermediate stationary climates.

A one-dimensional energy-balance model (1D EBM) is obtained by imposing zonal symmetry. The resulting
temperature therefore depends only on time and latitude. Setting $
(-1,1) \ni
x=\sin \varphi,$ where $\varphi$ denotes latitude, 1D EBM has the form
\begin{equation}
\partial_t u = \partial_x \left((1-x^2)\partial_x u\right) + Q_\varepsilon(x) \beta_{\mathrm{ph}}(u) + q - \mathfrak{e} u |u|^3, \qquad x \in(-1,1).
\label{eq:introduction-parabolic-degenerate-ebm}
\end{equation}
The coefficient $
\kappa(x):=1-x^2 $ vanishes at $x=\pm 1$, which represent the two poles. Thus the diffusion is
degenerate at the endpoints, and the polar behaviour is encoded through the
degenerate operator and its natural weighted energy space rather than through
an independently imposed classical boundary condition.

In a two-dimensional energy-balance model (2D EBM), the temperature is allowed to vary over the full
surface of the Earth. Modelling the Earth as
the unit sphere, we consider, for $\omega \in \S^2$,
\begin{equation}
\partial_t u = \div_{\S^2} (\kappa \nabla_{\S^2}u )
+ Q_\varepsilon(\omega)\beta_{\mathrm{ph}}(u) + q - \mathfrak{e} u|u|^3, \qquad \omega \in \mathbb S^2,
\label{eq:introduction-parabolic-spherical-ebm}
\end{equation}
where $
\kappa\geq k_*>0.$ Note that for constant spherical diffusivity $\kappa$, the 1D EBM \eqref{eq:introduction-parabolic-degenerate-ebm} is the zonally symmetric reduction of the 2D EBM \eqref{eq:introduction-parabolic-ebm}, obtained by assuming that the temperature and the insolation are independent of longitude.

\subsection{Related literature and contribution}
\noindent 
EBMs have been studied since the classical works
of Budyko, Sellers, and North
\cite{Budyko1969,Sellers1969,North1975,North1981}. The possibility of
multiple stationary climates, with cold and warm states separated by an
intermediate unstable state, and the corresponding bifurcation and stability
questions have been investigated in \cite{Ghil1976,North90}. A variational formulation of
Budyko-Sellers models was introduced in
\cite{North1979variational}. More recently, \cite{DelSarto2024} established,
under sufficient conditions including a large-diffusion hypothesis, the
existence of two local minimisers and a mountain-pass saddle for a spatially
heterogeneous 1D EBM with \emph{non-degenerate} diffusion.

The present paper complements these results by giving a perturbative
Morse-theoretic description of three prescribed homogeneous equilibria. For
insolation of the form
$$
Q_\varepsilon=Q_*+\varepsilon Q_1,
$$
we continue the cold, middle, and warm equilibria in the operator-domain
topology and prove that their variational character persists: the cold and
warm branches are strict local minimisers, whereas the middle branch is
non-degenerate and has Morse index one. We also establish the
Palais-Smale condition and construct a mountain-pass critical point between
the two minima. Under an additional homogeneous isolation assumption at the
minimax level, the globally constructed mountain-pass point is shown to
coincide with the locally continued middle branch. The local Morse-index
property and the global minimax characterisation are thus kept logically
distinct.

The one-dimensional part is formulated for the strongly degenerate Legendre
diffusion
$$
-\partial_x\left((1-x^2)\partial_x u\right),
$$
whose weighted functional framework has previously appeared in the analysis
of degenerate and stochastic EBMs
\cite{Diaz1997,Floridia2014,Diaz2022}. Multiplicity, S-shaped branches, and
stability have also been studied for discontinuous Budyko models, mainly by D\'iaz et al., see
\cite{Diaz1997b,Diaz1998,Diaz2019}. In contrast, the present work
uses a smooth co-albedo and combines weighted compactness, spectral
perturbation, Morse-index persistence, and mountain-pass theory.

Budyko-type 2D EBMs  on compact Riemannian manifolds and on the sphere
have been considered in
\cite{Diaz1997b,DiazTello1999,Bermejo2009} by D\'iaz and collaborators. These works address stationary multiplicity, parabolic well-posedness, and
numerical approximation in global two-dimensional models. More
recently, Sellers-type EBMs have been coupled to the primitive equations,
with global strong well-posedness and time-periodic solutions established in
\cite{DelSartoHieberZochling2026,DelSartoHieberPalma}. In contrast, the contribution
of the spherical part of the present paper is to establish the same
perturbative Morse and minimax structure as in the degenerate interval, for a uniformly elliptic diffusivity on $\S^2$.

Finally, Legendre spectral expansions are classical in zonally averaged EBMs
\cite{North1975}, and convergent Galerkin-Legendre methods for degenerate
climate equations have been analysed in \cite{Plocinicz}. Our
Legendre-Galerkin computations are intended to illustrate the deformation
of the three stationary branches, their Morse-index, and the associated energy landscape.

\subsubsection*{Notation}

\noindent
Given a Banach space $X$, we denote its norm by
$ \| \cdot\|_X$. In particular, when the underlying domain is clear from the
context, $\|\cdot\|_p$ denotes the norm in $\rL^p$. For a sequence
$(u_n)_n\subset X$, the notation $u_n \to u$ in $X$ means strong
convergence, while $u_n \rightharpoonup u$ in $X$ means weak
convergence, see \cite{Brezis}.

For the 1D EBM, setting $\mathcal{I}:=(-1,1)$, $
\kappa(x):=1-x^2,$ we introduce the weighted Sobolev space
$$
\rH^1_\kappa(\mathcal{I}) :=\left\{ u \in  \rL^2(\mathcal{I}): u \text{ is locally absolutely continuous in } \mathcal{I}, \ \sqrt{\kappa}\,u'\in \rL^2(\mathcal{I}) \right\}.
$$
It is endowed with the norm
$$
\|u\|_{1,\kappa}^2 := \| u \|_2^2 + |u|_{1,\kappa}^2, \qquad |u|_{1,\kappa}^2 := \int_{\mathcal{I}}\kappa(x)|u'(x)|^2 \d x.
$$
This is the natural energy space associated with the degenerate quadratic
form $ u \mapsto \int_{\mathcal{I}}\kappa(x)|u'(x)|^2\d x$,
 see \cite{Kufner,Floridia2014} for more details.

For the 2D EBM, we denote by $ \mathbb{S}^2 :=\{ \omega \in \R^3 : |\omega|=1\}$ the unit sphere and, for $\omega\in\S^2$, by $
T_\omega\S^2 := \{ v \in \R^3 : v \cdot\omega=0\} $ its tangent plane. The Riemannian metric $g$ induced by the Euclidean
scalar product is given by
$$
g_\omega(v,w):=v\cdot w, \qquad v, w \in T_\omega \S^2.
$$
We denote by $ \d\omega$ the surface measure on $\S^2$. In spherical coordinates $( \theta , \phi)$, where
$\theta \in (0,\pi)$ is the colatitude and
$ \phi \in(0,2\pi)$ is the longitude, it holds $
 \d\omega= \sin \theta \d \theta \d \phi. $ Moreover, a tangent vector field $X$ satisfies
$X(\omega)\in T_\omega\S^2$ for every $\omega \in \S^2$.
For $u\in \rC^\infty(\S^2)$, the tangential gradient
$\nabla_{\S^2}u$ is characterised by
$$
g_\omega(\nabla_{\S^2}u(\omega),X(\omega)) = \d u_\omega [X(\omega)]
$$
for every smooth tangent vector field $X$. The spherical divergence is
defined by the integration-by-parts identity
$$
\int_{S^2} g(X,\nabla_{\S^2}\varphi )\d \omega = - \int_{\S^2} \div_{\S^2}X\, \varphi \d \omega, \qquad \varphi\in \rC^\infty(\S^2),
$$
and the Laplace-Beltrami operator is $
\Delta_{\S^2}u :=\div _{\S^2} (\nabla_{\mathbb S^2}u).$ We refer to the monograph \cite{Abate2012,Aubin1998} for more information on differential
geometry.

The Sobolev space $\rH^1(\S^2)$ is defined as the completion of
$\rC^\infty(\S^2)$ with respect to the norm
$$
\|u\|_{\rH^1(\S^2)}^2 := \int_{\S^2}|u|^2\d \omega
+ \int_{\S^2} |\nabla_{\S^2}u|^2\d \omega.
$$
When $\kappa\in \rL^\infty(\S^2)$ and $\kappa \geq k_*>0$, as in our framework, uniform ellipticity gives
$$
k_*
\int_{\S^2} |\nabla_{\S^2}u |^2\d \omega \leq \int_{\S^2}\kappa|\nabla_{\S^2}u|^2\d \omega \leq \|\kappa\|_{\infty }
\int_{\S^2} | \nabla_{\S^2}u|^2\d \omega.
$$
Hence the weighted and unweighted Dirichlet semi-norms are equivalent, see
\cite{Aubin1998} for more details on analysis on Riemannian manifolds.

Finally, let $X$ be a real Banach space and $\mathcal{E} \in \rC^2(X;\R)$. We write
$$
D\mathcal{E}(u)[v] \quad \text {and} \quad D^2\mathcal E(u)[v,w]
$$
for the first and second variations of $\mathcal{E}$ at $u$, evaluated
in the directions $v,w\in X$. We also use $\mathcal{E}'(u) = D\mathcal{E}(u)$ when the derivative is regarded as an
element of $X^*$. A point $u_*\in X$ is critical if
$$
D\mathcal{E}(u_*)[v]=0 \qquad \forall v \in X.
$$
Suppose now that $X$ is a real Hilbert space and that $u_*$ is a critical
point of $\mathcal{E}$. The point $u_*$ is non-degenerate if
$$
D^2\mathcal{E}(u_*)[v,w]=0 \quad \forall w \in X \quad  \Rightarrow \quad v=0. $$
The Morse index of $u_*$ is
$$
\mathrm{ind}_{\mathcal E}(u_*) := \sup \{ \dim (E): E \subset X \text{ is a linear subspace, }\,
D^2\mathcal{E}(u_*)[v,v]<0 \, \forall  v \in E\setminus\{0\} \} \in \mathbb{N}_0\cup\{+\infty\}.
$$
Thus Morse index zero means that the Hessian has no strictly negative
direction, whereas Morse index one means that its maximal negative subspace
is one-dimensional. A non-degenerate critical point of Morse index one is
therefore a variational saddle point with exactly one negative direction.
By contrast, Morse index zero alone does not imply strict local minimality. In the applications below, strict local minimality follows from coercivity of
the Hessian. We refer to \cite{Chang,Jabri2003} for more details on critical points in infinite dimensions and Morse index.

\subsection{Main results}
\label{subsec: main results}
\noindent
We use the geometry parameter $ \mathcal{M} \in\{ \mathcal{I}, \S^2\}$, with $
\mathcal{I}=(-1,1),$ and write $ \d\mu_{\mathcal{I}}:=\d x, \d \mu_{\S^2}:=\d\omega.$
The corresponding energy spaces are
\begin{equation}
X_{\mathcal M} :=\begin{cases} \rH^1_\kappa(-1,1),
& \mathcal M=\mathcal{I},\\
\rH^1(\S^2), & \mathcal{M}=\S^2.
\end{cases}
\label{eq: energy spaces def}
\end{equation}
In the interval case, $\kappa(x)=1-x^2$. In the spherical case, we assume $ \kappa \in \rW^{1,\infty}(\S^2)$ and $
\kappa\geq k_*>0.$ For $u,v\in X_{\mathcal M}$, define
$$
a_{\kappa,\mathcal M}(u,v) := \begin{cases}
\int_{\mathcal{I}}
\kappa(x)u'(x)v'(x)\d x, \qquad  & \mathcal{M}=\mathcal{I},\\
\int_{\S^2} \kappa \nabla_{\mathbb{S}^2}u\cdot \nabla_{\S^2}v\d \omega, & \mathcal{M}=\S^2.
\end{cases}
$$
Let $A_{\kappa,\mathcal M}$ be the non-negative self-adjoint operator on $\rL^2(\mathcal{M}, \d \mu_{\mathcal{M}})$ associated with this form. Its first positive eigenvalue is
\begin{equation}
\lambda_1^{\kappa,\mathcal{M}} := \inf_{\substack{ v \in X_{\mathcal M}\setminus\{0\}\\
\int_{\mathcal M}v\d \mu_{\mathcal M}=0}} \frac{a_{ \kappa, \mathcal{M}}(v,v)} {\| v \|_{\rL^2(\mathcal{M})}^2} >0.
\label{eq:main-results-first-positive-eigenvalue}
\end{equation}
When the geometry is fixed, we write simply $A_\kappa$ and
$\lambda_1^\kappa$. Observe that the physical co-albedo $\beta_{\mathrm{ph}}$ is defined only for
non-negative (absolute) temperatures. However, the variational functional $G_\varepsilon^{\mathcal{M}}$ in \eqref{eq:introduction-ebm-energy} is considered
on the whole energy space $X_\mathcal{M}$ in \eqref{eq: energy spaces def}, whose elements are not a priori constrained to be
non-negative. We therefore assume that $\beta_{\mathrm{ph}}$ admits an extension
\begin{equation}
\beta \in \rC_b^1(\R), \qquad \beta|_{[0,\infty)} = \beta_{\mathrm{ph}}, \qquad 0 \leq \beta \leq 1, \qquad \beta' \geq 0,
\label{eq:extension coalbedo}
\end{equation}
and we fix one such extension. Let $B$ be a primitive of $\beta$, so that the energy functional is given by
$$
G_\varepsilon^{\mathcal{M}}(u) = \int_{\mathcal M} \left( \mathfrak{e} \frac{|u|^5}{5} - Q_\varepsilon B(u) - qu
\right)\d \mu_{\mathcal{M}} + \frac{1}{2} a_{\kappa, \mathcal{M}}(u,u).
$$
When the geometry is fixed, we omit the superscript $\mathcal{M}$ in $G_\varepsilon^\mathcal{M}$. Recalling that $Q_\varepsilon = Q_* + \varepsilon Q_1$, define the homogeneous scalar
potential
\begin{equation}
R(\theta ) := \mathfrak e\frac{|\theta |^5}{5}-Q_*B(\theta )-q\theta , \qquad \theta \in \R.
\label{eq:main-results-homogeneous-potential}
\end{equation}

Our first assumption captures the classical double-well structure of homogeneous energy-balance models, corresponding to the coexistence of cold and warm climate states separated by an intermediate transition state. This bistable picture is well established in the EBM literature and is also supported by physical and palaeoclimatic considerations; see \cite{Marotzke,Hoffmann1998,Hoffmann2002}. The curvature conditions below express this geometry variationally: the cold and warm equilibria are local minima, while the Hessian at the intermediate equilibrium has one negative constant direction and is positive on the mean-zero subspace.

\begin{asu}[Homogeneous non-degenerate three-equilibrium structure]
\label{ass:homogeneous-three-well}
There exist three numbers $ 0 < \theta_S < \theta_M<\theta_W$ such that
\begin{equation}
R'(\theta_i)=0, \qquad i \in\{S , M , W\}.
\label{eq:main-results-three-critical-points}
\end{equation}
Setting
\begin{equation}
\mu_i := R''(\theta_i) = 4\mathfrak{e} \theta_i^3- Q_*\beta'(\theta_i), \qquad i \in \{ S, M ,W\},
\label{eq:mu-i-definition-degenerate}
\end{equation}
we assume, for every geometric setting $\mathcal M$ under consideration, that
\begin{equation}
\mu_S>0, \qquad \mu_W>0, \qquad -\lambda_1^{\kappa,\mathcal M}< \mu_M < 0.
\label{eq:main-results-unified-morse-condition}
\end{equation}
\end{asu}
\noindent 
In climatic terms, the condition $\mu_M<0$ means that the constant
perturbation mode is energetically unstable: a small spatially uniform
warming or cooling decreases the energy in either direction away from the
middle equilibrium. On the other hand,
$-\lambda_1^{\kappa,\mathcal M}<\mu_M$ implies that the Hessian is positive
on the mean-zero subspace, since
$\lambda_1^{\kappa,\mathcal M}+\mu_M>0$. Thus, perturbations that
redistribute temperature without changing its spatial average increase the
energy. Hence the Hessian at the homogeneous middle equilibrium has a
one-dimensional negative subspace, corresponding to variations of the global
mean temperature.

The previous assumption is local and determines the Morse structure of the
three homogeneous equilibria. To relate the intermediate equilibrium to the
global energy barrier between the cold and warm states, we impose the
following additional assumption on the homogeneous problem.

\begin{asu}[Homogeneous mountain-pass isolation]
\label{ass:homogeneous-mountain-pass-isolation}
Let $ u_i^0\equiv\theta_i,$ for $
i\in \{S,M,W\}.$ For every geometric setting $\mathcal M$ under consideration, define $
\Gamma_0^{\mathcal M} := \{ \gamma\in \rC([0,1];X_{\mathcal M}): \gamma(0)=u_S^0,\  \gamma(1)=u_W^0 \}$ and
\begin{equation}
c_0^{\mathcal M} := \inf_{\gamma\in\Gamma_0^{\mathcal M}} \sup_{t\in[0,1]} G_0^{\mathcal M}(\gamma(t)).
\label{eq:main-results-unified-homogeneous-minimax-level}
\end{equation}
We assume that the homogeneous middle equilibrium realises this minimax
level,
\begin{equation}
c_0^{\mathcal M} = G_0^{\mathcal M}(u_M^0),
\label{eq:main-results-unified-middle-realises-level}
\end{equation}
and that it is the unique critical point at that level, that is
\begin{equation}
(G_0^{\mathcal M})'(u)=0, \qquad
G_0^{\mathcal M}(u)=c_0^{\mathcal M} \quad \Longrightarrow \quad u=u_M^0.
\label{eq:main-results-unified-level-isolation}
\end{equation}
\end{asu}
\noindent 
We can now state the main result for the one-dimensional degenerate model.

\begin{thm}
\label{thm:main-result-degenerate}
Assume \autoref{ass:homogeneous-three-well} for
$\mathcal M=\mathcal I$ and $Q_1 \in \rC([-1,1])$. Set $
Q_\varepsilon (x) = Q_* + \varepsilon Q_1 (x),$ with $ Q_* >0.$ Then there exists $\varepsilon_0>0$ such that, for every $|\varepsilon|<\varepsilon_0$, the stationary equation
$$
-(\kappa u')' + \mathfrak{e} u|u|^3 - Q_\varepsilon\beta(u) - q = 0, \qquad \kappa(x)=1-x^2,
$$
admits three solutions $
u_S^\varepsilon, u_M^\varepsilon, u_W^\varepsilon \in D(A_\kappa).$ For every $i\in\{S,M,W\}$,
$$
u_i^\varepsilon \to \theta_i \qquad \text{in }D(A_\kappa) \quad \text{as } \varepsilon \to 0.
$$
Moreover, $u_S^\varepsilon$ and $u_W^\varepsilon$ are strict local
minimisers of $G_\varepsilon$ in $\rH^1_\kappa(\mathcal{I})$, while
\(u_M^\varepsilon\) is non-degenerate and has Morse index one.

Define $ \Gamma_\varepsilon^{\mathcal I} :=\left\{ \gamma \in \rC\bigl([0,1], \rH^1_\kappa( \mathcal{I})\bigr):
\gamma(0)=u_S^\varepsilon,\  \gamma(1)=u_W^\varepsilon
\right\}$ and $ c_\varepsilon^{\mathcal I}:=
\inf_{\gamma\in\Gamma_\varepsilon^{\mathcal I}} \sup_{t\in[0,1]}
G_\varepsilon(\gamma(t)).$ Then there exists a mountain-pass critical point
$u_{MP}^\varepsilon\in \rH^1_\kappa(\mathcal{I})$ such that
$$
G_\varepsilon'(u_{MP}^\varepsilon)=0, \qquad G_\varepsilon(u_{MP}^\varepsilon) = c_\varepsilon^{\mathcal I}, \qquad  c_\varepsilon^{\mathcal I}
> \max\left\{ G_\varepsilon(u_S^\varepsilon), G_\varepsilon(u_W^\varepsilon) \right\}.
$$

If, in addition \autoref{ass:homogeneous-mountain-pass-isolation} holds for $\mathcal{M}=\mathcal{I}$, then, after possibly decreasing
$\varepsilon_0$, every mountain-pass critical point
$u_{\mathrm{MP}}^\varepsilon$ at the minimax level
$c_\varepsilon^{\mathcal I}$ coincides with the perturbative middle branch, that is $
u_{\mathrm{MP}}^\varepsilon=u_M^\varepsilon.$

\end{thm}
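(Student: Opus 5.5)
The proof has three layers: local continuation of the three homogeneous equilibria, a global mountain-pass argument, and an identification step linking the two.

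\emph{Continuation.} Write the stationary equation as $F(\varepsilon,u):=A_\kappa u+\mathfrak e\,u|u|^3-Q_\varepsilon\beta(u)-q=0$, with $F:\R\times D(A_\kappa)\to \rL^2(\mathcal I)$. The key functional-analytic input is that $D(A_\kappa)\hookrightarrow \rC([-1,1])$, i.e.\ the domain of the Legendre operator embeds in the bounded continuous functions. To see this, I would expand $u$ in Legendre polynomials $P_n$, which are the eigenfunctions of $A_\kappa$ with eigenvalues $n(n+1)$. Since $\|P_n\|_\infty=1$ and $\|P_n\|_2^2=2/(2n+1)$, Cauchy--Schwarz gives $\sum_n|\hat u_n|\lesssim \|u\|_{D(A_\kappa)}$, and this series dominates the sup norm. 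With this embedding, the Nemytskii maps $u\mapsto u|u|^3$ and $u\mapsto\beta(u)$ are $\rC^1$ from $D(A_\kappa)$ to $\rL^2$ (using $\beta\in \rC^1_b$), so $F$ is $\rC^1$ and $F(0,\theta_i)=0$. The linearisation is $D_uF(0,\theta_i)=A_\kappa+\mu_i$. On the constants it acts as multiplication by $\mu_i\neq0$; on the mean-zero modes its spectrum is $\{n(n+1)+\mu_i\}_{n\ge1}$, which is bounded away from $0$ because $\lambda_1^\kappa=2$ and $\mu_i>-\lambda_1^\kappa$. Hence $D_uF(0,\theta_i)$ is an isomorphism $D(A_\kappa)\to \rL^2$, and the implicit function theorem produces $\rC^1$ branches $u_i^\varepsilon$ with $u_i^\varepsilon\to\theta_i$ in $D(A_\kappa)$.

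\emph{Morse structure.} The Hessian at $u_i^\varepsilon$ is the form $a_\kappa(v,v)+\int(4\mathfrak e|u_i^\varepsilon|^3-Q_\varepsilon\beta'(u_i^\varepsilon))v^2$. Since $u_i^\varepsilon\to\theta_i$ uniformly, the potential converges to $\mu_i$ in $\rL^\infty$, so the Hessian is an $\rL^2$-bounded perturbation of $a_\kappa+\mu_i$.
\begin{itemize}
\item For $i=S,W$, the form $a_\kappa+\mu_i$ is coercive on $\rH^1_\kappa$, since $\mu_i>0$. Coercivity survives the small perturbation, so the Hessian is coercive at $u_i^\varepsilon$. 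A second-order Taylor expansion then gives strict local minimality in $\rH^1_\kappa$. This requires $G_\varepsilon\in \rC^2$ near $u_i^\varepsilon$, which I would obtain from $\rH^1_\kappa\hookrightarrow \rL^p$ for every $p<\infty$: the degeneracy of $\kappa$ is only logarithmic, so this embedding holds by a Hardy-type inequality. The quintic term is then $\rC^2$ on $\rH^1_\kappa$ via the $\rL^5$/$\rL^6$ bounds.
\item For $i=M$, the form is negative on the constants and coercive on the mean-zero subspace. This splitting is stable under small perturbations, giving a non-degenerate critical point with Morse index exactly one; equivalently, the spectrum of the self-adjoint operator has exactly one eigenvalue below $0$ and none at $0$.
\end{itemize}

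\emph{Mountain pass.} I would first prove the Palais--Smale condition.
\begin{itemize}
\item Coercivity: $G_\varepsilon(u)\ge \tfrac12 a_\kappa(u,u)+c\|u\|_5^5-C\|u\|_1-C$, because $|B(u)|\le |B(0)|+|u|$. So PS sequences are bounded in $\rH^1_\kappa$.
\item Compactness: the embedding $\rH^1_\kappa\hookrightarrow \rL^2$ is compact, since the Legendre eigenvalues tend to infinity. Interpolating, bounded sequences have subsequences converging strongly in every $\rL^p$.
\item Strong convergence: testing $G'_\varepsilon(u_n)-G'_\varepsilon(u)$ against $u_n-u$ and using the monotonicity of $s\mapsto s|s|^3$ yields $u_n\to u$ in $\rH^1_\kappa$.
\end{itemize}
Since $u_S^\varepsilon$ is a strict local minimiser with coercive Hessian, there is a radius $r$ such that $G_\varepsilon\ge G_\varepsilon(u_S^\varepsilon)+\delta$ on the sphere $\|u-u_S^\varepsilon\|=r$. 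Any path in $\Gamma_\varepsilon^{\mathcal I}$ must cross this sphere, because $u_W^\varepsilon$ lies outside it. The same holds at $u_W^\varepsilon$. Hence $c_\varepsilon^{\mathcal I}$ is strictly above both minima, and the mountain-pass theorem yields $u_{MP}^\varepsilon$.

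\emph{Identification.} This is the main obstacle. I would argue by contradiction: suppose there are $\varepsilon_n\to0$ and critical points $v_n$ at level $c_{\varepsilon_n}$ with $v_n\neq u_M^{\varepsilon_n}$.
\begin{itemize}
\item Level convergence: $c_{\varepsilon}\to c_0$. For the upper bound, take a near-optimal path for $G_0$ and concatenate it with short segments from $u_S^\varepsilon$ to $\theta_S$ and from $\theta_W$ to $u_W^\varepsilon$; note that $|G_\varepsilon-G_0|\le|\varepsilon|\,\|Q_1\|_\infty\int|B(u)|$, which is uniformly small on bounded sets. The lower bound is symmetric.
\item Compactness of the $v_n$: the $v_n$ are bounded. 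A uniform-in-$\varepsilon$ version of the PS argument above gives a subsequence $v_n\to v$ in $\rH^1_\kappa$, where $G_0'(v)=0$ and $G_0(v)=c_0$. By Assumption~\ref{ass:homogeneous-mountain-pass-isolation}, $v=\theta_M$.
\item Upgrading the topology: elliptic regularity bootstraps this convergence to $D(A_\kappa)$, since the nonlinearity is bounded in $\rL^2$ once the $v_n$ are bounded in $\rL^\infty$. The $\rL^\infty$ bound I expect to be the delicate point. I would get it from the equation together with the $\rL^p$ bounds, or from a maximum-principle argument comparing with large constants.
\item Conclusion: the local uniqueness part of the implicit function theorem now forces $v_n=u_M^{\varepsilon_n}$ for large $n$, which is the desired contradiction.
\end{itemize}
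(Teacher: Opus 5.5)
Your proposal is correct and follows essentially the paper's route. The steps are: the implicit function theorem for $A_\kappa u+\mathfrak e u|u|^3-Q_\varepsilon\beta(u)-q$ from $D(A_\kappa)$ to $\mathrm{L}^2$; an $\mathrm{L}^\infty$-small perturbation of the Hessian for the Morse structure; Palais--Smale via monotonicity of $s\mapsto s|s|^3$ together with the two-minima mountain pass; and identification via continuity of $c_\varepsilon$, compactness of the mountain-pass points (the paper shows they form a Palais--Smale sequence for $G_0$), an upgrade to $D(A_\kappa)$, and local uniqueness from the implicit function theorem. For the upgrade, your comparison with large constants is exactly the paper's test with $(u_n-M)^+$ and $-(u_n+M)^-$.

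The only real variation is your Legendre-series proof of $D(A_\kappa)\hookrightarrow \mathrm{C}([-1,1])$. The paper instead integrates the flux $\kappa u'$ from its vanishing endpoint traces to get $|u'(x)|\lesssim \|A_\kappa u\|_2(1\mp x)^{-1/2}$. Both work, and, as in the paper's spherical case, the upgrade could also use $\mathrm{H}^1_\kappa\hookrightarrow \mathrm{L}^8$ directly instead of an $\mathrm{L}^\infty$ bound.
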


\begin{proof}
The existence of $u_S^{\varepsilon}, u_M^\varepsilon, u_W^\varepsilon$, their convergence, and the Morse index assertions follow from
\autoref{thm: existence three climate degenerate small eps}. The part regarding the existence of the mountain-pass point $u_{MP}^\varepsilon$ follows from
\autoref{cor:mountain-pass-between-branches-degenerate}. Under the additional
isolation assumption, the identification follows from
\autoref{prop:identification-mp-morse-degenerate}.
\end{proof}
\noindent 
We next state the corresponding result for the spherical model.

\begin{thm}
\label{thm:main-result-sphere}
Let $ \kappa\in \rW^{1,\infty}(\S^2),$ $\kappa \geq k_*>0.$ Assume \autoref{ass:homogeneous-three-well} for
$\mathcal M=\S^2$. Let $Q_1\in \rC(\S^2)$, and set $
Q_\varepsilon(\omega)=Q_*+\varepsilon Q_1(\omega),$ with  $Q_* >0$. Then there exists $\varepsilon_0>0$ such that, for every $|\varepsilon|<\varepsilon_0$, the stationary equation
$$
-\div_{\S^2} (\kappa\nabla_{\S^2}u) +\mathfrak{e} u|u|^3-Q_\varepsilon \beta(u) -q=0
$$
admits three solutions $
u_S^\varepsilon, u_M^\varepsilon, u_W^\varepsilon \in D(A_\kappa)=\rH^2(\S^2).$ For every $i\in\{S,M,W\}$,
$$
u_i^\varepsilon \to \theta_i \qquad \text{in }D(A_\kappa)=\rH^2(\S^2) \quad\text{as } \varepsilon \to 0.
$$
Moreover, $u_S^\varepsilon$ and $u_W^\varepsilon$ are strict local
minimisers of $G_\varepsilon$ in $\rH^1(\S^2)$, while
$u_M^\varepsilon$ is non-degenerate and has Morse index one.

Define $ \Gamma_\varepsilon^{\S^2} :=\{ \gamma\in \rC ([0,1]; H^1(\mathbb{S}^2) ): \gamma(0) = u_S^\varepsilon, \  \gamma(1)=u_W^\varepsilon \}$ and
$ c_\varepsilon^{\S^2} := \inf_{\gamma \in \Gamma_\varepsilon^{\S^2}}
\sup_{t\in[0,1]} G_\varepsilon(\gamma(t)).$ Then there exists a mountain-pass critical point $u_{MP}^\varepsilon\in \rH^1(\S^2)$ such that
$$
G_\varepsilon'(u_{MP}^\varepsilon)=0, \qquad G_\varepsilon(u_{MP}^\varepsilon) =c_\varepsilon^{\mathbb S^2}, \qquad  c_\varepsilon^{\S^2} > \max\{
G_\varepsilon(u_S^\varepsilon), G_\varepsilon (u_W^\varepsilon)
\}.
$$

If, in addition
\autoref{ass:homogeneous-mountain-pass-isolation} holds for $\mathcal M=\mathbb S^2$, then, after possibly decreasing $\varepsilon_0$, the mountain-pass critical point $u_{\mathrm{MP}}^\varepsilon$ provided above coincides with the
perturbative middle branch, that is$
u_{\mathrm{MP}}^\varepsilon=u_M^\varepsilon.$
\end{thm}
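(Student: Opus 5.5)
We prove the spherical statement by the same three-step scheme as \autoref{thm:main-result-degenerate}, using uniform ellipticity in place of the weighted Legendre framework.

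\textbf{Step 1: continuation of the three equilibria.} Since $\kappa\in\rW^{1,\infty}(\S^2)$ and $\kappa\ge k_*>0$, elliptic regularity on the compact manifold gives $D(A_\kappa)=\rH^2(\S^2)$ with equivalent graph norm. Moreover, $\rH^2(\S^2)\hookrightarrow \rC(\S^2)$ is a Banach algebra.

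We consider the map
\[
F(\varepsilon,u):=A_\kappa u+\mathfrak e\,u|u|^3-(Q_*+\varepsilon Q_1)\beta(u)-q,
\]
from $\R\times\rH^2(\S^2)$ to $\rL^2(\S^2)$. Since $t\mapsto t|t|^3$ is $\rC^2$ and $\beta\in\rC^1_b$, the Nemytskii operators are $\rC^1$ from $\rC(\S^2)$ to $\rC(\S^2)$. Hence $F$ is $\rC^1$, and $F(0,\theta_i)=0$ because $R'(\theta_i)=0$.

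The linearisation is $D_uF(0,\theta_i)=A_\kappa+\mu_i$. It has spectrum $\{\mu_i\}\cup\{\lambda+\mu_i:\lambda\ge\lambda_1^{\kappa}\}$, which excludes $0$ by \eqref{eq:main-results-unified-morse-condition}. Therefore $A_\kappa+\mu_i$ is an isomorphism from $\rH^2$ onto $\rL^2$. The implicit function theorem then yields $\rC^1$ branches $u_i^\varepsilon$ with $u_i^\varepsilon\to\theta_i$ in $\rH^2$.

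\textbf{Step 2: Morse structure.} The Hessian is
\[
D^2G_\varepsilon(u)[v,v]=a_\kappa(v,v)+\int_{\S^2}\bigl(4\mathfrak e|u|^3-Q_\varepsilon\beta'(u)\bigr)v^2\d\omega .
\]
The potential $V_i^\varepsilon=4\mathfrak e|u_i^\varepsilon|^3-Q_\varepsilon\beta'(u_i^\varepsilon)$ converges to $\mu_i$ in $\rL^\infty$. This follows from uniform convergence of $u_i^\varepsilon$ together with uniform continuity of $\beta'$ on compacts, which I would check, or assume $\beta'$ continuous.

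For $i\in\{S,W\}$ we obtain $D^2G_\varepsilon(u_i^\varepsilon)[v,v]\ge \tfrac12\min(1,\mu_i)\|v\|_{\rH^1}^2$. Since $G_\varepsilon$ is $\rC^2$ on $\rH^1(\S^2)$, this coercivity gives strict local minimality by Taylor expansion. The $\rC^2$ regularity holds because $\rH^1(\S^2)\hookrightarrow\rL^p$ for all $p<\infty$ in dimension two, so $|u|^5$ is a $\rC^2$ functional, and $\beta'$ is bounded and continuous.

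For $i=M$, we split $v=c+w$ with $w$ of mean zero. The quadratic form is close to $\mu_M c^2|\S^2|+a_\kappa(w,w)+\mu_M\|w\|_2^2$, and this limit form is negative on constants and coercive on mean-zero functions by $\lambda_1^\kappa+\mu_M>0$. A perturbation argument then gives exactly one negative direction and no kernel. An explicit alternative is that the bounded self-adjoint operator with compact resolvent has eigenvalues continuous in $\varepsilon$.

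\textbf{Step 3: mountain pass and identification.} This is the main obstacle.

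\emph{Palais--Smale condition.} Coercivity of $G_\varepsilon$ follows from the $|u|^5$ term, since $B$ grows at most linearly. Hence PS sequences are bounded in $\rH^1$. Rellich compactness of $\rH^1\hookrightarrow\rL^p$ combined with the monotone structure of $A_\kappa$ then gives strong convergence.

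\emph{Strict barrier.} Strict local minimality with a uniform neighbourhood gives $\inf_{\|u-u_S^\varepsilon\|=r}G_\varepsilon>G_\varepsilon(u_S^\varepsilon)$, and the same for $u_W^\varepsilon$. The mountain pass theorem then gives a critical point at level $c_\varepsilon>\max G_\varepsilon(u_{S,W}^\varepsilon)$.

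\emph{Identification.} We argue by contradiction. Suppose $\varepsilon_n\to0$ and there are mountain-pass points $u_n\ne u_M^{\varepsilon_n}$. First, $c_{\varepsilon_n}\to c_0$: the endpoints move continuously, and $|G_\varepsilon-G_0|\le|\varepsilon|\|Q_1\|_\infty\int|B(u)|$, which is uniformly small on bounded sets. Since near-optimal paths can be taken in a bounded set by coercivity, the levels converge.

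The $u_n$ are bounded and satisfy $G_0'(u_n)\to0$, so by Palais--Smale for $G_0$ a subsequence converges to a critical point of $G_0$ at level $c_0$. By \eqref{eq:main-results-unified-level-isolation} this limit is $u_M^0$. Elliptic bootstrapping upgrades $\rH^1$ convergence to $\rH^2$ convergence. Finally, the local uniqueness in the implicit function theorem forces $u_n=u_M^{\varepsilon_n}$, which is a contradiction.
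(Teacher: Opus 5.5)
Your proposal is correct and follows essentially the same route as the paper. It uses the implicit function theorem for $\mathcal F$ on $\R\times D(A_\kappa)=\rH^2(\S^2)$ with $A_\kappa+\mu_i$ invertible, and persistence of the Morse structure via $\|a_i^\varepsilon-\mu_i\|_{\infty}\to 0$. It then uses the Palais--Smale condition with the two-minima mountain-pass theorem, and identifies the saddle via $c_\varepsilon\to c_0$, Palais--Smale for $G_0$, the isolation assumption, the $\rL^2$-convergence of $A_\kappa u_n$ (to upgrade to $D(A_\kappa)$), and local uniqueness from the implicit function theorem.

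One small slip: the coercivity constant for $i\in\{S,W\}$ should be something like $\tfrac12\min\{k_*,\mu_i\}$ rather than $\tfrac12\min\{1,\mu_i\}$, since only $a_\kappa(v,v)\ge k_*\|\nabla_{\S^2}v\|_2^2$ is available. This does not affect the argument.
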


\begin{proof}
The persistence, convergence, and Morse index assertions follow from
\autoref{thm: three climate sphere}. The mountain-pass assertion follows from
\autoref{cor:MP point between min sphere}. Under the additional isolation
assumption, the identification follows from
\autoref{prop:identification-mp-morse-sphere}.
\end{proof}
\noindent
Finally, the numerical experiments in \autoref{sec:numerical section} complement the
analytical results for the 1D EBM. We introduce a
Legendre-Galerkin discretisation, for which the degenerate diffusion
operator is diagonal in coefficient space, and use parameter continuation in $\varepsilon$ to approximate the cold, middle, and warm stationary
branches. We then compute the discrete energies and the eigenvalues of the
discrete Hessian in order to illustrate the persistence of the Morse-index
pattern and the corresponding energy landscape. These computations thus provide
a numerical illustration of the analytical theory.

\subsubsection*{Organisation of the paper}
\noindent 
The remainder of the paper is organised as follows. In
\autoref{sec: 1D degenerate}, we study the 1D EBM with
degenerate diffusion. We prove the persistence of the cold, middle, and
warm stationary branches under small heterogeneous perturbations, establish
the Palais-Smale condition for $G_\varepsilon^\mathcal{I}$, and construct a mountain-pass critical point $u^\varepsilon_{MP}$
between the two strict local minimisers $u_S^\varepsilon, u_W^{\varepsilon}$. Under the additional isolation
assumption in \autoref{ass:homogeneous-mountain-pass-isolation}, the point $u^\varepsilon_{MP}$ is identified with the non-degenerate
Morse-index-one middle branch $u^\varepsilon_M$. In \autoref{sec: spherical model}, we develop the analogous
theory on $\S^2$, adapting the argument to the uniformly elliptic
spherical setting. Lastly, \autoref{sec:numerical section} illustrates the
one-dimensional theory through a Legendre-Galerkin discretisation, numerical
continuation of the stationary branches $u_i^\varepsilon$, for $i \in \{S, M, W \}$, and computations of the discrete
energy $G_\varepsilon^{\mathcal{I}}$ and Hessian spectrum.

\section{The one-dimensional degenerate model}
\label{sec: 1D degenerate}
\noindent 
Throughout this section, the geometry is fixed to $\mathcal M=\mathcal I=(-1,1)$. To simplify the notation, we suppress the
superscript $\mathcal I$ and write just $ G_\varepsilon,
\Gamma_\varepsilon, c_\varepsilon, a_\kappa, A_\kappa, \lambda_1^\kappa.
$
\subsection{Variational setting and preliminary properties}
\label{sec: functional definition}
Let $\beta$ the extension to $\R$ of the physical coalbedo $\beta_{\mathrm{ph}}$ satisfying \eqref{eq:extension coalbedo} Denote by $B(\theta ):= B_0 + \int_0 ^\theta \beta (r) \d r$ a primitive of $\beta$, and note that
\begin{equation}
0 \leq  \abs{B(\theta )} \leq \abs{B_0} + \abs{\theta } \quad \forall \theta .
\label{eq: inequality B(u)}
\end{equation}
We start by recalling a classical result regarding the embeddings for $H^1_\kappa (\mathcal{I})$.
\begin{lem}
For every $p\in[2,\infty)$, the embedding $
\rH^1_\kappa(\mathcal I)\hookrightarrow \rL^p(\mathcal I)$ is continuous. Moreover, the embedding $
\rH^1_\kappa(\mathcal I)\hookrightarrow \rL^2(\mathcal I)$ is compact.
\label{lemma: compact embedding weighted sobolev}
\end{lem}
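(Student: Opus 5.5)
The plan is to prove the two assertions separately. For continuity into $\rL^p$ the key tool is a pointwise estimate of $u$ that degenerates only logarithmically at the poles. For compactness into $\rL^2$ we use the spectral theory of the Legendre operator.

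\emph{Continuity.} Let $u\in \rH^1_\kappa(\mathcal I)$. Since $\kappa\geq 3/4$ on $[-1/2,1/2]$, $u$ belongs to $\rH^1(-1/2,1/2)$ with norm bounded by $C\|u\|_{1,\kappa}$. By the one-dimensional Sobolev embedding, $|u(x_0)|\le C\|u\|_{1,\kappa}$ for some $x_0\in[-1/2,1/2]$; in fact this holds for every such $x_0$. For $x\in(x_0,1)$ we write $u(x)=u(x_0)+\int_{x_0}^x u'$ and apply Cauchy--Schwarz with the weight $\kappa$:
\[
|u(x)-u(x_0)|\le |u|_{1,\kappa}\Bigl(\int_{x_0}^x \frac{\d s}{1-s^2}\Bigr)^{1/2}\le C|u|_{1,\kappa}\bigl(1+|\log(1-x)|\bigr)^{1/2}.
\]
The symmetric estimate holds near $x=-1$. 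Hence $|u(x)|\le C\|u\|_{1,\kappa}\,(1+|\log(1-x^2)|)^{1/2}$ on $\mathcal I$. The function $(1+|\log(1-x^2)|)^{p/2}$ is integrable on $(-1,1)$ for every finite $p$, because powers of logarithms are integrable near the endpoints. Integrating the $p$-th power therefore gives $\|u\|_p\le C_p\|u\|_{1,\kappa}$.

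\emph{Compactness.} Let $(u_n)$ be bounded in $\rH^1_\kappa$. We split $\mathcal I$ into $I_\delta=(-1+\delta,1-\delta)$ and its complement. On $I_\delta$ the weight satisfies $\kappa\ge\delta$, so $(u_n)$ is bounded in $\rH^1(I_\delta)$. By Rellich's theorem it is precompact in $\rL^2(I_\delta)$. On the complement we use Hölder together with the uniform $\rL^4$ bound from the first part:
\[
\int_{\mathcal I\setminus I_\delta}|u_n|^2\le |\mathcal I\setminus I_\delta|^{1/2}\|u_n\|_4^2\le C\delta^{1/2}.
\]
This tail is uniformly small. A diagonal argument over $\delta=1/k$ then yields a subsequence that is Cauchy in $\rL^2(\mathcal I)$.

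As an alternative argument, $A_\kappa$ is the Legendre operator with eigenvalues $k(k+1)\to\infty$ and the orthonormal Legendre polynomials as eigenbasis. The form domain, which is $\rH^1_\kappa$, therefore embeds compactly into $\rL^2$. The only subtlety is identifying the form domain with the space defined above, so the splitting argument is the preferable route.

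The main obstacle is the logarithmic pointwise bound near the poles, since it is where the degeneracy enters. Once that bound is in place, both the continuity and the uniform tail estimate follow directly.
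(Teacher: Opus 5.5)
Your proof is correct. The paper gives no argument for this lemma: both embeddings are quoted from Lemma 3.2 and Proposition 3.8 of \cite{Floridia2014}, so your route is genuinely different in being self-contained.

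You derive everything from the pointwise envelope
\[
|u(x)|\le C\|u\|_{1,\kappa}\bigl(1+|\log(1-x^2)|\bigr)^{1/2},\qquad x\in\mathcal I,
\]
obtained from the interior $\rH^1$ bound on $[-1/2,1/2]$ and Cauchy--Schwarz against $\kappa^{-1}$. Compactness then comes from Rellich on $(-1+\delta,1-\delta)$, where $\kappa\ge\delta$, together with the uniform $\rL^4$ tail and a diagonal extraction. All of these steps check out.

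Your route buys several things:
\begin{enumerate}
\item It does not depend on the weighted Poincar\'e inequality, which the paper deduces from this lemma.
\item The same tail argument with an $\rL^{2p}$ bound gives compactness into every $\rL^p$ with $p<\infty$.
\item It shows precisely how $\rH^1_\kappa(\mathcal I)\hookrightarrow\rL^\infty(\mathcal I)$ fails. The exponent $1/2$ is sharp, since $(1-\log(1-x))^{\alpha}$ lies in $\rH^1_\kappa(\mathcal I)$ and is unbounded for every $0<\alpha<1/2$.
\end{enumerate}
The third point is why the paper must pass to $D(A_\kappa)$ in its $\rL^\infty$ lemma: there the vanishing flux $\kappa u'$ at $\pm1$ removes the logarithmic loss. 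The citation buys only brevity.

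On your spectral alternative: the form domain of $A_\kappa$ is $\rH^1_\kappa(\mathcal I)$ by construction. What genuinely needs checking is that $A_\kappa$ is the diagonal Legendre operator, that is, that the boundary terms $\kappa P_n' v$ vanish at $\pm1$ for all $v\in\rH^1_\kappa(\mathcal I)$. Your envelope gives this at once. After that, completeness of the Legendre polynomials in $\rL^2(\mathcal I)$ and self-adjointness identify the spectrum as $\{n(n+1)\}$.
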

\begin{proof}
   The continuous embedding into $\rL^p(\mathcal I)$, for every
$p\in[2,\infty)$, follows from \cite[Lemma 3.2]{Floridia2014}.
The compact embedding into $\rL^2(\mathcal I)$ follows from
\cite[Proposition 3.8]{Floridia2014}.
\end{proof}
\noindent 
We now recall the weighted Poincar\'e inequality, which will be used in the following
to identify the spectral gap of the degenerate diffusion operator.
\begin{lem}[Weighted Poincar\'e inequality]
\label{lem: weighted poincare}
For $\kappa (x) = 1-x^2$, define $
V:=\left \lbrace v\in \rH^1_\kappa(\mathcal{I})\; :\; \int_{\mathcal{I}} v(x)\d x = 0\right \rbrace .$ Then there exists $C>0$ such that
\begin{equation}\label{eq: weighted poincare}
\|v\|_{2}^2 \leq C \int_{\mathcal{I}} \kappa(x)\, \vert v'(x) \vert ^2\d x, \qquad\forall v\in V.
\end{equation}
\end{lem}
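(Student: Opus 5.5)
We plan to argue by contradiction, combining the compactness from \autoref{lemma: compact embedding weighted sobolev} with the fact that the only functions in $\rH^1_\kappa(\mathcal I)$ whose weighted Dirichlet energy vanishes are the constants.

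Suppose \eqref{eq: weighted poincare} fails for every $C>0$. Then there is a sequence $(v_n)_n\subset V$ with
$$
\|v_n\|_2 = 1, \qquad \int_{\mathcal I}\kappa |v_n'|^2 \d x < \frac1n .
$$
Hence $(v_n)_n$ is bounded in $\rH^1_\kappa(\mathcal I)$. Since this space is a Hilbert space, we extract a subsequence with $v_n\rightharpoonup v$ in $\rH^1_\kappa(\mathcal I)$. By the compact embedding of \autoref{lemma: compact embedding weighted sobolev}, we may also assume $v_n\to v$ in $\rL^2(\mathcal I)$. Consequently
$$
\|v\|_2=1, \qquad \int_{\mathcal I} v\d x = \lim_n \int_{\mathcal I} v_n \d x = 0,
$$
where the second identity holds because $\mathcal I$ is bounded, so the mean is $\rL^2$-continuous.

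Next we pass to the limit in the energy. The map $u\mapsto \sqrt\kappa\, u'$ is bounded and linear from $\rH^1_\kappa(\mathcal I)$ into $\rL^2(\mathcal I)$. It therefore preserves weak convergence, giving $\sqrt\kappa\, v_n'\rightharpoonup \sqrt\kappa\, v'$ in $\rL^2(\mathcal I)$. Weak lower semicontinuity of the $\rL^2$ norm then yields
$$
\int_{\mathcal I}\kappa|v'|^2\d x \le \liminf_n \int_{\mathcal I}\kappa |v_n'|^2\d x = 0 .
$$
Since $\kappa>0$ in the open interval, we get $v'=0$ almost everywhere in $\mathcal I$.

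The remaining step, which we expect to be the only delicate one, is to conclude that $v$ is constant. The point is that $v$ is merely locally absolutely continuous, and $\kappa$ degenerates at the endpoints. Still, on each compact subinterval $[a,b]\subset\mathcal I$ we have $v(y)-v(x)=\int_x^y v'\d t=0$. Since $\mathcal I$ is connected, $v$ equals a constant $c$ on all of $\mathcal I$. The mean-zero condition then forces $2c=0$, so $c=0$ and $v=0$. This contradicts $\|v\|_2=1$, which proves the lemma.

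As an alternative, or as a way to identify the optimal constant, one can expand in Legendre polynomials $P_n$. These satisfy $-((1-x^2)P_n')'=n(n+1)P_n$ and form an orthogonal basis of $\rL^2(\mathcal I)$. Functions in $V$ have no $P_0$ component, which gives $C=1/2$. We would, however, use the compactness argument, since it only relies on \autoref{lemma: compact embedding weighted sobolev}.
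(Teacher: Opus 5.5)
Your proof is correct and follows the paper's argument: a contradiction with a normalised sequence, weak compactness in $\rH^1_\kappa(\mathcal{I})$ together with the compact embedding into $\rL^2(\mathcal{I})$, and identification of the limit as a mean-zero constant, hence $0$. Your use of weak lower semicontinuity of $u\mapsto\int_{\mathcal I}\kappa|u'|^2\,dx$ to get $v'=0$ is a slightly cleaner justification than the paper's local $\rL^2(K)$ step, and your Legendre remark correctly gives the sharp constant $C=1/2$.
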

\begin{proof}
    First, let's observe that $V$ is a closed linear subspace of $\rH^1_\kappa (\mathcal{I})$. Indeed, it can be checked that the map $p \colon \rH^1_\kappa (\mathcal{I}) \to \mathbb{R}$, with $p(u):= \int_{\mathcal{I}} u(x) \d x $ is linear and bounded, and $V= p^{-1}(\{0\}).$

    Now, assume by contradiction that the proof of the lemma is false. Thus, there exists a sequence $(u_k)_k \subseteq V$ such that
    $$
    \norm{u_k}_2^2 > k \abs{u_k}_{1,\kappa}^2.
    $$
    Set $v_k:= u_k / \norm{u_k}_2$, and observe that: (i) $\norm{v_k}_2 = 1$, (ii) $\abs{v_k}_{1,\kappa} = \frac{\abs{u_k}_{1,\kappa}}{\norm{u_k}_2} < \frac{1}{\sqrt{k}}$, and (iii) $\norm{v_k}^2_{1,\kappa} = 1 + \abs{v_k}_{1,\kappa}^2$. From (ii) and (iii) it follows that $\norm{v_k}_{1,\kappa} \to 1$. Hence $(v_k)_k \subset V$ is a bounded sequence and thus, up to subsequences, there exists $v \in V$ such that $v_k\rightharpoonup v \quad\text{weakly in }\rH^1_\kappa$. On the other hand, by the boundedness of $(v_k)_k$ in $V$, and the compact embedding $\rH^1_\kappa \hookrightarrow \rL^2$ from \autoref{lemma: compact embedding weighted sobolev}, it also holds, up to subsequences, that $v_k \to v$ in $\rL^2$. Since $\norm{v_k}_2 =1$ for all $k$, then also $\norm{v}_2 = 1$. Further, since $\abs{v_k}_{1,\kappa}^2 = \int_{\mathcal{I}} (1-x^2) (v'_k(x))^2 \d x \to 0$, then $v'_k \to 0$ in $\rL^2(K)$, for any compact $K \subset \subset \mathcal{I}.$ Thus $v' = 0$ a.e., and hence $v \equiv c$, for some constant $c$. But $v \in V$, and thus $c=0$. This is a contradiction with the fact that $\norm{v}_2 = 1.$
\end{proof}
\noindent 
We can now apply the direct method of the calculus of variations. In particular, the
fifth-order radiative term provides coercivity, while the compact embedding
from \autoref{lemma: compact embedding weighted sobolev} allows us to pass to the limit in the lower-order terms. The monotonicity
of $B$ then permits a truncation argument showing that, for $q \geq 0$, a
global minimiser is non-negative.
\begin{prop}
\label{prop: existence minim}
Assume that $q \geq 0$ and that $Q_\varepsilon\in \rC([-1,1])$ satisfies  $Q_\varepsilon \geq 0$. Then the functional
$$
G_\varepsilon(u)=\int_{\mathcal{I}} \Big(    \mathfrak{e}\frac{\abs{u}^5}{5} - Q_\varepsilon(x)B(u)-q u\Big)\d x
+\tfrac12\int_{\mathcal{I}} \kappa (x)(u'(x))^2\d x 
$$
admits a global minimiser $u_0 = u_0 (\varepsilon,q) \in \rH^1_\kappa (\mathcal{I})$. Moreover $
u_0\in \rC^2(\mathcal{I})$ and $ u_0\geq 0$ in $\mathcal{I}.
$
\end{prop}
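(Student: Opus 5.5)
We apply the direct method in $\rH^1_\kappa(\mathcal I)$. The first step is well-definedness and weak lower semicontinuity. By \autoref{lemma: compact embedding weighted sobolev}, $\rH^1_\kappa\hookrightarrow \rL^5$ continuously, so the quintic term is finite. By \eqref{eq: inequality B(u)}, $|Q_\varepsilon B(u)|\le \|Q_\varepsilon\|_\infty(|B_0|+|u|)$, so the $B$-term and the $qu$ term are finite on $\rL^1(\mathcal I)$.

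The second step is coercivity. Using Young's inequality,
$$
\|Q_\varepsilon\|_\infty |u| + q|u| \le \frac{\mathfrak e}{10}|u|^5 + C, \qquad \mathfrak e\frac{|u|^5}{5}\ge \frac{\mathfrak e}{10}|u|^5 \ge c|u|^2 - C',
$$
where the second inequality absorbs an $\rL^2$ term since $\mathcal I$ has finite measure. This gives
$$
G_\varepsilon(u)\ge c_1\big(\|u\|_2^2+|u|_{1,\kappa}^2\big)-C_2 .
$$
A minimising sequence is therefore bounded in $\rH^1_\kappa$. Up to a subsequence, $u_n\rightharpoonup u_0$ in $\rH^1_\kappa$, $u_n\to u_0$ in $\rL^2$ by compactness, and $u_n\to u_0$ a.e. The Dirichlet part is weakly lower semicontinuous, being a continuous convex quadratic form. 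The quintic term is handled by Fatou's lemma. The terms $\int Q_\varepsilon B(u_n)$ and $\int qu_n$ converge, because $B$ is $1$-Lipschitz (as $0\le\beta\le1$) and $u_n\to u_0$ in $\rL^2\subset \rL^1$. Hence $u_0$ is a global minimiser.

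The third step is non-negativity. Compare $u_0$ with $|u_0|\in \rH^1_\kappa$, which has the same Dirichlet and quintic terms. Since $B'=\beta\ge0$, $B$ is nondecreasing, so $B(|u_0|)\ge B(u_0)$. Together with $Q_\varepsilon\ge0$ and $q\ge0$ (so that $-q|u_0|\le -qu_0$), this gives $G_\varepsilon(|u_0|)\le G_\varepsilon(u_0)$, with strict inequality if $q>0$ and $\{u_0<0\}$ has positive measure. That alone only shows that $|u_0|$ is also a minimiser. To conclude about $u_0$ itself, I would use $v=u_0^+$, i.e. the truncation $\max(u_0,0)$. It satisfies $|v|_{1,\kappa}\le|u_0|_{1,\kappa}$ and $|v|^5\le|u_0|^5$, and
$$
-Q_\varepsilon B(v)-qv \le -Q_\varepsilon B(u_0)-qu_0 \quad\text{on }\{u_0<0\}.
$$
On $\{u_0<0\}$ the difference in the integrand is
$$
-\mathfrak e\frac{|u_0|^5}{5} - Q_\varepsilon\big(B(0)-B(u_0)\big) + q u_0 ,
$$
which is strictly negative there, since the first term is strictly negative and the others are nonpositive. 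If $\{u_0<0\}$ had positive measure we would get $G_\varepsilon(u_0^+)<G_\varepsilon(u_0)$, contradicting minimality. Hence $u_0\ge0$ a.e.

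The last step is regularity, which I expect to be the main obstacle because the operator degenerates at the endpoints. The Euler-Lagrange equation $-(\kappa u_0')'=f:=Q_\varepsilon\beta(u_0)+q-\mathfrak e u_0|u_0|^3$ holds weakly against $\rC_c^\infty(\mathcal I)$. On each compact $K\subset\subset\mathcal I$ the operator is uniformly elliptic, so $u_0\in \rH^1(K)\subset \rC(K)$. Then $f\in \rC(K)$, so $\kappa u_0'$ is $\rC^1$ with derivative $-f$. Dividing by $\kappa>0$ gives $u_0'\in \rC^1$, hence $u_0\in \rC^2(\mathcal I)$ and the equation holds classically in the interior. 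The continuity of $u_0$ used here is that of the absolutely continuous representative. No boundary regularity is claimed, so the degeneracy at $\pm1$ causes no trouble. Finally, $u_0\ge0$ a.e. upgrades to $u_0\ge0$ everywhere in $\mathcal I$ by continuity.
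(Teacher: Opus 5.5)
Your proposal is correct and follows essentially the same route as the paper: the direct method with coercivity coming from the quintic term, compactness of $\rH^1_\kappa(\mathcal I)\hookrightarrow\rL^2(\mathcal I)$, Fatou's lemma together with the global Lipschitz bound on $B$ for lower semicontinuity, the truncation $u_0^+$ for non-negativity, and interior regularity from $\kappa u_0'\in\rC^1(K)$ on compacts $K\Subset\mathcal I$. The differences are cosmetic: you obtain the $\rL^2$ control by splitting the quintic term rather than via $\rL^5\hookrightarrow\rL^2$, and you explicitly upgrade $u_0\geq 0$ a.e.\ to everywhere in $\mathcal I$ by continuity, which the paper leaves implicit.
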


\begin{proof}   
Assume for simplicity that $\mathfrak{e} = 1$. We divide the proof into four steps.

\textit{Step 1: coercivity and compactness.} Since $B$, thanks to \eqref{eq: inequality B(u)}, has at most linear growth and
$Q_\varepsilon\in \rL^\infty(\mathcal{I})$, there exist constants
$c_0,C_0>0$ such that
$$
\frac{|s|^5}{5} - Q_\varepsilon(x)B(s) - qs \geq c_0 |s|^5 - C_0
$$
for every $s \in\R$ and every $x\in[-1,1]$. Thus
\begin{equation}
G_\varepsilon(u) \geq \frac{1}{2} \| \sqrt{\kappa} u' \|_{2}^2 + c_0 \| u \|_{5}^5 - 2C_0.
\label{eq: coercivity energy minimiser}
\end{equation}
Since $\rL^5(\mathcal{I})\hookrightarrow \rL^2(\mathcal{I})$, it follows that
$G_\varepsilon$ is coercive on $\rH^1_\kappa(\mathcal{I})$.

Let $(u_n)_n\subset \rH^1_\kappa(\mathcal{I})$ be a minimising sequence. By
\eqref{eq: coercivity energy minimiser}, the sequence is bounded in
$\rH^1_\kappa(\mathcal{I})$ and in $\rL^5(\mathcal{I})$. Hence, after passing to a
subsequence, there exists $u_0\in \rH^1_\kappa(\mathcal{I})$ such that
$$
u_n \rightharpoonup u_0 \qquad \text{in }\rH^1_\kappa(\mathcal{I}).
$$
By the compact embedding in
\autoref{lemma: compact embedding weighted sobolev}, up to a subsequence,
$$
u_n\to u_0 \qquad \text{in }\rL^2(\mathcal{I})
$$
and, after extracting a further subsequence,
$$
u_n(x)\to u_0(x) \qquad \text{for a.e. }x\in \mathcal{I}.
$$

\textit{Step 2: lower semicontinuity.} By Fatou's lemma,
$$
\int_{\mathcal{I}} \frac{ |u_0|^5}{5}\d x
\leq \liminf_{n\to\infty} \int_{\mathcal{I}} \frac{|u_n|^5}{5}\d x.
$$
Since $0 \leq \beta \leq1$, the primitive $B$ is globally Lipschitz, and
therefore
$$
\|B(u_n)-B(u_0)\|_{1} \leq \| u_n-u_0\|_{1}
\to 0.
$$
Using $Q_\varepsilon\in \rL^\infty(\mathcal{I})$, we obtain
$$
\int_{\mathcal{I}} Q_\varepsilon(x)B(u_n)\d x
\to \int_{\mathcal{I}} Q_\varepsilon(x)B(u_0) \d x.
$$
Similarly, it is possible to show that
$
\int_{\mathcal{I}}u_n\d x \to \int_{\mathcal{I}}u_0\d x.$ Finally, weak lower semicontinuity gives
$$
\int_{\mathcal{I}} \kappa(x) |u_0'(x)|^2 \d x
\leq \liminf_{n\to\infty} \int_{\mathcal{I}} \kappa(x) |u_n'(x)|^2\d x.
$$
Combining these convergences, we obtain
$$
G_\varepsilon(u_0) \leq \liminf_{n\to\infty}G_\varepsilon(u_n) = \inf_{\rH^1_\kappa(\mathcal{I}) }G_\varepsilon.
$$
Thus $u_0$ is a global minimiser.

\textit{Step $3$: regularity for $u_0$.} The first variation of $G_\varepsilon$ in the point $u$ in direction $h$ is given by
$$
D G_\varepsilon(u)[h] = \int_{\mathcal{I}}
\left( u \abs{u}^3 - Q_\varepsilon(x) \beta(u)-q  \right )h \d x + \int_{\mathcal{I}} \kappa(x)u'h'\d x.
$$
Choosing $h \in \rC^\infty_c(\mathcal{I})$ and setting $\phi(t):= G_\varepsilon(u_0 + th)$, it holds $\phi'(0)=0$. So
$$
0=\phi'(0)=\delta G_\varepsilon(u_0,h),
$$
from which it follows
\begin{equation}
\int_{\mathcal{I}} \left(u_0 \abs{u_0}^3- Q_\varepsilon(x)\beta(u_0)- q \right)h \d x = - \int_{\mathcal{I}} \kappa(x) u_0 ' h ' \d x.
\label{eq: 1}
\end{equation}
Then
\begin{equation}
(\kappa(x) u_0 ')'= u_0 \abs{u_0}^3- Q_\varepsilon(x)\beta(u_0)- q
\label{eq: ELE debole}
\end{equation}
in the weak sense. We now prove that the right-hand side is continuous in the interior. Let $K\Subset \mathcal{I}$. Since $\kappa$ is strictly positive on $K$ and
$u_0 \in \rH^1_\kappa(\mathcal{I})$, we have $u_0\in \rH^1(K)$. By the one-dimensional
Sobolev embedding, it holds
$
u_0\in \rC^0(K).
$
Since $Q_\varepsilon\in \rC([-1,1])$ and $\beta $ is continuous, it follows that
$$
f(x):=
u_0(x)\abs{u_0(x)}^3-Q_\varepsilon(x) \beta(u_0(x))-q \in \rC^0(K).
$$
By \eqref{eq: ELE debole}, $(\kappa u_0')'=f$ in the sense of distributions
on $K$. Since $f\in \rC^0(K)$, we get $\kappa u_0'\in \rC^1(K).$
Because $\kappa(x)>0$ on $K$ and $\kappa$ is smooth, this implies $u_0'\in \rC^1(K),$
and hence $ u_0\in \rC^2(K).$ Since $K\Subset \mathcal{I}$ was arbitrary, we conclude that $ u_0\in \rC^2(\mathcal{I}).$

\textit{Step 4: $u_0 \geq 0$.}
We use a global truncation argument. Define
$$
u_0^+(x):=\max\{u_0(x),0\}.
$$
Since the map $s\mapsto s^+$ is Lipschitz, we have
$u_0^+\in \rH^1_\kappa(\mathcal{I})$ and
$$
(u_0^+)'=\mathds{1}_{\{u_0>0\}}u_0'
\qquad\text{a.e.}
$$
Therefore
$$
\int_{\mathcal{I}}\kappa(x)\bigl((u_0^+)'\bigr)^2\d x
\leq
\int_{\mathcal{I}}\kappa(x)(u_0')^2\d x.
$$
Let$
E:=\{x\in \mathcal{I}: u_0(x)<0\}.
$
On $E$ one has $u_0^+=0$. Hence
$$
G_\varepsilon(u_0)-G_\varepsilon(u_0^+)
= \int_E \left[
\frac{\abs{u_0}^5}{5} -Q_\varepsilon(x)B(u_0) -q u_0 +Q_\varepsilon(x)B(0) \right]\d x 
+\frac{1}{2} \int_E\kappa(x)(u_0')^2\d x .
$$
Since $\beta = B' \geq 0$, the function $B$ is nondecreasing. Therefore,
on $E$ we have
$$
B(0)-B (u_0 ( x)) \geq 0.
$$
Moreover, since $q\geq 0$ and $u_0<0$ on $E$, one has $-q u_0 \geq 0$ on $E$. Moreover, $\vert u_0 \vert^5/5 >0$ a.e.\ on $E$. Consequently, if $\vert E \vert >0$, then 
$$
G_ \varepsilon( u_0 ) - G_\varepsilon(u_0^+)
> 0,
$$
contradicting the minimality of $u_0$.

\end{proof}
\noindent
The preceding truncation argument excludes negative values of a minimiser. A
second truncation, now from above, gives a uniform upper bound.
\begin{lem}
\label{lem: C_0 boundedness minimiser}
Let $b>0$ and $\varepsilon_*>0$, and assume that $
\overline Q := \sup_{ |\varepsilon| \leq \varepsilon_*} \| Q_\varepsilon \|_{\infty } < \infty.$ Then there exists $M=M(b,\overline{Q})>0$ such that, for every $q \in[0,b]$, every $|\varepsilon| \leq \varepsilon_*$, and every global
minimiser $u_0=u_0(\varepsilon,q)$ of $G_\varepsilon$, it holds
$$
0 \leq u_0 \leq M \qquad\text{a.e. in }(-1,1).
$$
\end{lem}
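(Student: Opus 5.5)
The lower bound $u_0\geq 0$ is already contained in \autoref{prop: existence minim}, since $q\geq 0$ and we may assume $\varepsilon_*$ small enough that $Q_\varepsilon\geq 0$; if this is not guaranteed, the same truncation from below still works once $Q_\varepsilon B$ is handled as below. For the upper bound I would mirror Step 4 of that proof with a truncation from above. Fix $M>0$, to be chosen, and set $w:=\min\{u_0,M\}$. Since $s\mapsto\min\{s,M\}$ is Lipschitz, $w\in\rH^1_\kappa(\mathcal I)$ and $w'=\mathds 1_{\{u_0<M\}}u_0'$ a.e. Hence the Dirichlet part satisfies $\int\kappa (w')^2\leq\int\kappa(u_0')^2$, and the difference between the two Dirichlet integrals is exactly $\int_F\kappa(u_0')^2\geq 0$, where $F:=\{u_0>M\}$.

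On $F$ the pointwise difference of the potentials is, with $\mathfrak e=1$,
$$
\phi_x(u_0)-\phi_x(M),\qquad \phi_x(s):=\frac{|s|^5}{5}-Q_\varepsilon(x)B(s)-qs .
$$
For $s>M$ this equals $\int_M^{s}\bigl(r^4-Q_\varepsilon(x)\beta(r)-q\bigr)\d r$. Because $0\leq\beta\leq1$, $|Q_\varepsilon|\leq\overline Q$ and $q\leq b$, the integrand satisfies
$$
r^4-Q_\varepsilon(x)\beta(r)-q\geq r^4-\overline Q-b>0
$$
whenever $r>M:=(\overline Q+b)^{1/4}+1$. This choice depends only on $b$ and $\overline Q$, as required. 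Therefore $\phi_x(u_0)-\phi_x(M)>0$ on $F$, and so $G_\varepsilon(u_0)-G_\varepsilon(w)>0$ whenever $|F|>0$. This contradicts global minimality, so $u_0\leq M$ a.e.

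The only point needing care is to use the derivative-integral representation rather than the crude bound on $B$, since the sign of $Q_\varepsilon$ is not assumed; the bound $|Q_\varepsilon\beta|\leq\overline Q$ handles both signs at once. The argument uses no properties of the minimiser beyond minimality and membership in $\rH^1_\kappa$, which gives the uniformity in $q$, $\varepsilon$ and in the choice of minimiser.

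For the lower bound with possibly negative $Q_\varepsilon$, the same computation on $\{u_0<-M\}$ with $\max\{u_0,-M\}$ gives $u_0\geq -M$. To obtain $u_0\geq0$ itself, I would invoke \autoref{prop: existence minim}, whose hypothesis $Q_\varepsilon\geq0$ I would take to hold in the setting where the lemma is applied, for instance $Q_*>0$ with $\varepsilon_*\|Q_1\|_\infty\leq Q_*$.
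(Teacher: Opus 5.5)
Your proposal is correct and follows the paper's proof essentially verbatim. The paper also takes $u_0\ge 0$ from \autoref{prop: existence minim}, and it gets the upper bound from the truncation $\min\{u_0,M\}$ with $M^4>\overline Q+b$, which makes $s\mapsto \frac{s^5}{5}-Q_\varepsilon(x)B(s)-qs$ strictly increasing on $[M,\infty)$ while the weighted Dirichlet term does not increase. Your observation that the lower bound needs $Q_\varepsilon\ge 0$ (e.g.\ $\varepsilon_*\|Q_1\|_\infty\le Q_*$) identifies a hypothesis the paper uses only implicitly when it invokes that proposition.
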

\begin{proof}
Assume for simplicity that $\mathfrak{e} = 1$. Fix $q \in [0,b]$ and $\varepsilon$ such that $\vert \varepsilon \vert \leq \varepsilon_*$. By \autoref{prop: existence minim}, $u_0 \geq 0$ a.e. in $\mathcal{I}$. Define
$$
\mathcal{R}(x,s) := \mathfrak{e}\frac{s^5}{5}-Q_\varepsilon(x) B(s)-q s,
\qquad s \geq 0.
$$
Since $0 \leq \beta \leq 1$ and $0<q<b$, we have, for $s \geq 0$,
$$
\partial_s\mathcal{R}(x,s) = s^4-Q_\varepsilon(x) \beta(s)-q \geq s^4- \| Q_\varepsilon \|_{\infty }-b \geq s^4 - \overline{Q} - b.
$$
Choose $M>0$ so large that
$$
M^4> \overline Q +b.
$$
Then
$$
\partial_s  \mathcal{R}(x,s)>0    \qquad \text{for every } s \geq M  \text{ and every } x \in[-1,1].
$$
Thus $\mathcal{R}(x,\cdot)$ is strictly increasing on $[M,\infty)$, uniformly
in $x$.

Define the truncation
$$
T_M(s):=\min\{s,M\}, \qquad T_M(u_0)(x):=T_M(u_0(x)).
$$
Since $T_M$ is Lipschitz, $T_M(u_0)\in H^1_\kappa(-1,1)$ and
$$
(T_M(u_0))' = \mathds 1_{\{u_0<M\}}u_0' \qquad \text{a.e.}
$$
Therefore
$$
\int_{\mathcal{I}}\kappa(x)((T_M(u_0))')^2\,dx \leq \int_{\mathcal{I}}\kappa(x)(u_0')^2\,dx.
$$
Let
$
E_M:=\{x\in \mathcal{I}:u_0(x)>M\}.
$
On $E_M$ one has $T_M(u_0)=M$. Since $\mathcal{R}(x,\cdot)$ is strictly
increasing on $[M,\infty)$, we obtain
$$
\mathcal{R}(x,u_0(x))>\mathcal{R}(x,M) = \mathcal R(x,T_M(u_0)(x)) \qquad    \text{for a.e. } x \in E_M.
$$
Outside $E_M$, $T_M(u_0)=u_0$. Hence $
G_\varepsilon(T_M(u_0))\leq  G_\varepsilon(u_0).$ Moreover, if $|E_M|>0$, then the inequality is strict
$$
G_\varepsilon(T_M(u_0))<G_\varepsilon(u_0),
$$
contradicting the minimality of $u_0$. Therefore $|E_M|=0$, and thus we can conclude that $
u_0\leq M$ a.e. in $\mathcal{I}.$
\end{proof}

\subsection{Three-climate structure in the degenerate model}
We now turn from the existence of a single global minimiser to the
variational three-climate structure. Our analysis has three components.
First, we establish the Palais-Smale compactness condition required for the
mountain-pass construction. Second, in the spatially homogeneous problem we
identify two strict local minimisers and one non-degenerate critical point of
Morse index one. Finally, we show that these three critical points persist
under sufficiently small heterogeneous perturbations of the insolation.

\subsubsection{Palais-Smale compactness}
The existence of a critical point separating two local minima is obtained by
a mountain-pass argument. The essential compactness input is the Palais-Smale condition, which ensures that approximate critical points with bounded energy have a convergent subsequence. We refer to the monograph
\cite{Jabri2003} for more details on the mountain pass theory.

Let $(X, \norm{\cdot})$ be a reflexive Banach space, $ \Phi \in \rC^1( X; \mathbb{R})$ be a functional and $\Phi '$ denote the first variation of $\Phi$.
\begin{defn}
The functional $\Phi$ satisfies the \emph{Palais-Smale condition} ((PS)-condition) if any sequence $\{u_n\}_n \subset X$ such that
$$
\Phi(u_n) \text{ is bounded and } \Phi ' (u_n) \rightarrow 0 \quad \text{in }X^*,
$$
admits a convergent subsequence.
\end{defn}
The previous one is a compactness condition needed in order to use the Mountain pass theorem, which loosely speaking affirms the existence of a mountain pass between two valleys. See \cite{Jabri2003} for more details.
\begin{thm}[Mountain pass]
\label{thm: mountain pass}
Assume that $\Phi$ satisfies the $(PS)$-condition, that $\Phi(0)=0$, and that
there exist $\rho,\alpha>0$ such that
$$
\Phi(x) \geq  \alpha \qquad \text{for every }x \in X \text{ with }\|x\|=\rho.
$$
Assume moreover that there exists $x_1\in X$ such that
$$
\|x_1\|>\rho, \qquad
\Phi(x_1)\leq 0.
$$
Let $ \Gamma := \left\lbrace  \gamma\in \rC([0,1],X):
\gamma(0)=0,\ \gamma(1)=x_1 \right\rbrace ,$ and define $
c := \inf_{\gamma\in\Gamma} \max_{t\in[0,1]}\Phi(\gamma(t)).$ Then $c \geq \alpha$ and there exists $x_2\in X$ such that
$$
\Phi'(x_2)=0, \qquad \Phi(x_2)=c.
$$
\end{thm}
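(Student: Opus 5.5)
The proof follows the classical Ambrosetti--Rabinowitz argument: first a topological intersection estimate gives the lower bound $c\geq\alpha$, then a deformation argument based on the (PS)-condition shows that $c$ is a critical value.

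\emph{Step 1: $c\geq\alpha$.} First, $\Gamma\neq\emptyset$, since the segment $t\mapsto tx_1$ belongs to it, so $c<\infty$. Take any $\gamma\in\Gamma$ and consider the continuous function $t\mapsto\|\gamma(t)\|$. It equals $0<\rho$ at $t=0$ and $\|x_1\|>\rho$ at $t=1$. By the intermediate value theorem there is $t_0$ with $\|\gamma(t_0)\|=\rho$, hence $\max_t\Phi(\gamma(t))\geq\Phi(\gamma(t_0))\geq\alpha$. Taking the infimum over $\gamma\in\Gamma$ gives $c\geq\alpha>0$. In particular $c>\max\{\Phi(0),\Phi(x_1)\}$, which will keep the endpoints fixed under the deformation below.

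\emph{Step 2: using (PS) away from critical points.} Suppose by contradiction that $K_c:=\{x:\Phi'(x)=0,\ \Phi(x)=c\}=\emptyset$. We claim there exist $\bar\varepsilon\in(0,\alpha/2)$ and $\delta>0$ such that $\|\Phi'(x)\|_{X^*}\geq\delta$ whenever $|\Phi(x)-c|\leq\bar\varepsilon$. Otherwise there is a sequence $x_n$ with $\Phi(x_n)\to c$ and $\Phi'(x_n)\to0$. By (PS) a subsequence converges to some $x$, and continuity of $\Phi$ and $\Phi'$ gives $x\in K_c$, a contradiction.

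\emph{Step 3: deformation and contradiction.} This is the step where the real work lies. We build a locally Lipschitz pseudo-gradient vector field $V$ on $\tilde X:=\{\Phi'\neq0\}$, i.e.\ one satisfying $\|V(x)\|\leq2\|\Phi'(x)\|$ and $\Phi'(x)[V(x)]\geq\|\Phi'(x)\|^2$. Since $X$ is only a Banach space, $V$ is obtained by a partition-of-unity construction, using that $\tilde X$ is paracompact as a metric space. We multiply $V$ by a Lipschitz cutoff equal to $1$ on $\Phi^{-1}([c-\varepsilon,c+\varepsilon])$ and $0$ outside $\Phi^{-1}((c-\bar\varepsilon,c+\bar\varepsilon))$, with $0<\varepsilon<\min\{\bar\varepsilon,\delta^2/2\}$. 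We then normalise so that the field is bounded, which makes its negative flow $\eta(t,\cdot)$ global. Along the flow $\Phi$ is nonincreasing, and in the strip it decreases at a rate comparable to $\delta$ per unit time. Consequently $\eta(1,\cdot)$ maps $\{\Phi\leq c+\varepsilon\}$ into $\{\Phi\leq c-\varepsilon\}$ and fixes every point with $|\Phi-c|\geq\bar\varepsilon$. Now choose $\gamma\in\Gamma$ with $\max\Phi\circ\gamma\leq c+\varepsilon$. Since $\Phi(0)=0$ and $\Phi(x_1)\leq0$ are both $<c-\bar\varepsilon$ (because $\bar\varepsilon<\alpha/2\leq c/2$), the endpoints are fixed. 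Hence $\eta(1,\gamma(\cdot))\in\Gamma$ with maximum at most $c-\varepsilon$, contradicting the definition of $c$. Therefore $K_c\neq\emptyset$, which yields the desired $x_2$.

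If the pseudo-gradient construction proves cumbersome, we would instead apply Ekeland's variational principle on the complete metric space $\Gamma$ (with the sup metric) to the lower semicontinuous functional $\gamma\mapsto\max_t\Phi(\gamma(t))$. This produces a Palais--Smale sequence at level $c$, and (PS) then gives the critical point directly.
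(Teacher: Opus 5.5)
Your proof is correct. It is the standard Ambrosetti--Rabinowitz argument: the intersection of every path with the sphere $\|x\|=\rho$ gives $c\ge\alpha$, and a pseudo-gradient deformation, made possible by (PS), rules out $K_c=\emptyset$. The paper does not prove this theorem; it quotes it as a classical result from \cite{Jabri2003}, and your argument is the standard proof of that result.

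One piece of bookkeeping should be tightened: $\varepsilon$ must match your normalisation of the field. For example, with $-gV/\|V\|$ the decay rate on the strip is at least $\delta/2$, so you need $\varepsilon\le\delta/4$. Your choice $\varepsilon<\delta^2/2$ guarantees this only after you shrink $\delta$ to be at most $1/2$, which is harmless.
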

\noindent 
For the applications below, it is convenient to reformulate the theorem for
paths connecting two strict local minimisers. 

\begin{cor}[Mountain pass between two local minimisers]
\label{cor:two-local-minima-mpt}
Let $X$ be a reflexive Banach space and let $\Phi\in C^1(X,\mathbb R)$ satisfy
the PS-condition. Assume that $u_0, u_1\in X$ are two distinct local
minimisers of $\Phi$ with a positive energy barrier in the following sense:
there exist $r_0,r_1>0$ and $\alpha_0,\alpha_1>0$ such that
$$
B_{r_0}(u_0)\cap B_{r_1}(u_1)=\emptyset,
$$
and
$$
\Phi(u) \geq \Phi(u_0)+\alpha_0 \qquad \text{for every }u \in\partial B_{r_0}(u_0),
$$
while
$$
\Phi(u)\geq \Phi(u_1)+\alpha_1 \qquad \text{for every } u \in \partial B_{r_1}(u_1).
$$
Let $ \Gamma := \left\lbrace  \gamma\in \rC([0,1],X): \gamma(0)=u_0,\ \gamma(1)=u_1 \right \rbrace,$ and define $ c := \inf_{\gamma\in\Gamma} \max_{t\in[0,1]}\Phi(\gamma(t)).$ Then $
c> \max\{\Phi(u_0),\Phi(u_1)\},$ and there exists $u_{MP}\in X$ such that
$$
\Phi'(u_{MP})=0, \qquad \Phi(u_{MP})=c.
$$
In particular, $u_{MP}\notin\{u_0,u_1\}$.
\end{cor}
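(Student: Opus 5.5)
The plan is to reduce the corollary to \autoref{thm: mountain pass} by a translation, and to use the second minimiser only as the endpoint of admissible paths. Without loss of generality assume $\Phi(u_1)\leq \Phi(u_0)$; otherwise swap the roles of $u_0,u_1$, which reverses paths and leaves $c$ unchanged. Define $\Psi(x):=\Phi(u_0+x)-\Phi(u_0)$ on $X$. Then $\Psi\in \rC^1(X;\R)$, $\Psi(0)=0$, and $\Psi$ inherits the (PS)-condition: if $\Psi(x_n)$ is bounded and $\Psi'(x_n)=\Phi'(u_0+x_n)\to 0$, then $(u_0+x_n)_n$ is a PS sequence for $\Phi$ and has a convergent subsequence, hence so does $(x_n)_n$. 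Set $\rho:=r_0$ and $\alpha:=\alpha_0$. The barrier hypothesis gives $\Psi(x)\geq\alpha$ for $\|x\|=\rho$. Take $x_1:=u_1-u_0$. Since $B_{r_0}(u_0)\cap B_{r_1}(u_1)=\emptyset$, we have $\|x_1\|\geq r_0+r_1>\rho$, and $\Psi(x_1)=\Phi(u_1)-\Phi(u_0)\leq 0$.

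\autoref{thm: mountain pass} then yields a critical point $x_2$ of $\Psi$ at level $c_\Psi\geq\alpha_0$, where $c_\Psi$ is the minimax over paths from $0$ to $x_1$. Translating back, $\gamma\mapsto \gamma-u_0$ is a bijection between $\Gamma$ and the path class of the theorem. Hence $c=c_\Psi+\Phi(u_0)\geq \Phi(u_0)+\alpha_0>\Phi(u_0)=\max\{\Phi(u_0),\Phi(u_1)\}$. The point $u_{MP}:=u_0+x_2$ satisfies $\Phi'(u_{MP})=0$ and $\Phi(u_{MP})=c$.

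Finally, $u_{MP}\notin\{u_0,u_1\}$ because $\Phi(u_{MP})=c$ is strictly larger than both $\Phi(u_0)$ and $\Phi(u_1)$.

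The only step needing care is $\|x_1\|>\rho$. Disjointness of the open balls only gives $\|u_1-u_0\|\geq\max\{r_0,r_1\}$ in general, not $r_0+r_1$. The case $\|u_1-u_0\|=r_0$ is nevertheless impossible, since it would put $u_1$ on $\partial B_{r_0}(u_0)$, forcing $\Phi(u_1)\geq\Phi(u_0)+\alpha_0>\Phi(u_0)$ and contradicting the normalisation. So $\|u_1-u_0\|>r_0$ strictly, which is exactly what the theorem requires. The barrier at $u_1$ is therefore not needed once we normalise by the lower of the two energies.
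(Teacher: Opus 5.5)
Your proposal is correct and follows essentially the paper's route: translate to the higher-energy minimiser, use its barrier sphere, and take the other minimiser as the endpoint in the classical mountain pass theorem. The one difference is that you read off $c>\max\{\Phi(u_0),\Phi(u_1)\}$ from $c_\Psi\ge\alpha_0$ rather than from the paper's separate path-crossing argument, which is fine.

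One small correction to your last paragraph: in a normed space, disjointness of the open balls does give $\|u_1-u_0\|\ge r_0+r_1$, since otherwise a suitable point $u_0+s(u_1-u_0)$ on the segment lies in both balls. So your first claim was right, and your fallback argument for $\|u_1-u_0\|>r_0$ is also valid.
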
 
\begin{proof}
First, every path $\gamma\in\Gamma$ must leave $B_{r_0}(u_0)$ and enter
$B_{r_1}(u_1)$. Therefore it crosses both boundaries
$\partial B_{r_0}(u_0)$ and $\partial B_{r_1}(u_1)$. Hence
$$
\max_{t\in[0,1]}\Phi(\gamma(t)) \geq \max \{ \Phi(u_0)+\alpha_0,\, \Phi(u_1)+\alpha_1\}.
$$
Taking the infimum over $\gamma\in\Gamma$ gives $ c> \max\{\Phi(u_0),\Phi(u_1)\}.$

It remains to prove that $c$ is a critical level. Without loss of generality, we can assume $
\Phi(u_1)\geq \Phi(u_0).$ Set $
\Psi(v):=\Phi(u_1+v)-\Phi(u_1),$ for any $ v\in X.$ Since $\Phi$ satisfies the PS-condition, the same holds for $\Psi$. Moreover,
$\Psi(0)=0$. By the barrier assumption at $u_1$, we have
$$
\Psi(v)\geq \alpha_1 \qquad \text{for every }\|v\|=r_1.
$$
Since $B_{r_1}(u_1)$ does not contain $u_0$, we have $
\|u_0-u_1\|>r_1.$ Furthermore,
$$
\Psi(u_0-u_1) = \Phi(u_0)-\Phi(u_1) \leq 0.
$$
Thus the classical Mountain Pass Theorem (i.e. \autoref{thm: mountain pass}) applies to $\Psi$ with endpoint
$u_0-u_1$. It gives a critical point $v_{MP}\in X$ of $\Psi$ at the level $
\inf_{\eta} \max_{t\in[0,1]}\Psi(\eta(t)),$ where the infimum is taken over all continuous paths $\eta$ joining $0$ to
$u_0-u_1$.

Setting $ u_{MP}:=u_1+v_{MP},$ we obtain $ \Phi'(u_{MP})=0.$ Moreover, paths $\eta$ joining $0$ to $u_0-u_1$ are in one-to-one correspondence with paths $\gamma(t)=u_1+\eta(t)$ connecting $u_1$ to $u_0$.
Since reversing a path does not change its maximum energy, the corresponding
minimax level is exactly $c-\Phi(u_1)$. Hence $
\Phi(u_{MP})=c.$ Finally, because $
c>\max\{\Phi(u_0),\Phi(u_1)\},$ the point $u_{MP}$ is distinct from both local minimisers.
\end{proof}
\noindent
We now verify that the energy functional $G_\varepsilon$ of the degenerate one-dimensional
model satisfies the Palais-Smale condition.
\begin{prop}\label{prop:PS-degenerate}
Assume that $Q_\varepsilon\in \rL^\infty (\mathcal{I})$, $\kappa(x)=1-x^2$. Let $\beta$ and its primitive $B$ satisfy \eqref{eq:extension coalbedo} and \eqref{eq: inequality B(u)} respectively. Then the functional
$$
 G_\varepsilon(u)=\int_{\mathcal{I}}\left(    \mathfrak{e}\frac{|u|^5}{5}-Q_\varepsilon(x)B(u)-q u\right)\d x +\frac{1}{2}\int_{\mathcal{I}}\kappa(x)(u')^2\d x
$$
satisfies the Palais-Smale ((PS)) condition on $\rH^1_\kappa(\mathcal{I})$.
\end{prop}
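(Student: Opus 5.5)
Let $(u_n)_n\subset \rH^1_\kappa(\mathcal I)$ satisfy $|G_\varepsilon(u_n)|\le C$ and $G_\varepsilon'(u_n)\to 0$ in the dual space. We show that a subsequence converges strongly in three steps: boundedness, weak and compact convergence, then an identity that upgrades this to strong convergence. Throughout we take $\mathfrak e=1$, as in \autoref{prop: existence minim}.

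\emph{Step 1: boundedness.} Boundedness does not need the derivative condition. The coercivity estimate \eqref{eq: coercivity energy minimiser} from Step 1 of \autoref{prop: existence minim} holds for any $Q_\varepsilon\in \rL^\infty(\mathcal I)$, because it only uses $|B(s)|\le |B_0|+|s|$ and the boundedness of $Q_\varepsilon$. It gives
$$\tfrac12|u_n|_{1,\kappa}^2+c_0\|u_n\|_5^5\le C+2C_0.$$
Since $\rL^5(\mathcal I)\hookrightarrow \rL^2(\mathcal I)$, the sequence $(u_n)_n$ is bounded in $\rH^1_\kappa(\mathcal I)$ and in $\rL^5(\mathcal I)$.

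\emph{Step 2: weak and compact convergence.} By reflexivity and \autoref{lemma: compact embedding weighted sobolev}, after passing to a subsequence there is $u\in \rH^1_\kappa(\mathcal I)$ with $u_n\rightharpoonup u$ in $\rH^1_\kappa(\mathcal I)$, $u_n\to u$ in $\rL^2(\mathcal I)$, and $u_n\to u$ almost everywhere. We upgrade this to $u_n\to u$ in $\rL^p(\mathcal I)$ for every $p<\infty$. The continuous embedding of \autoref{lemma: compact embedding weighted sobolev} bounds $\|u_n\|_{p'}$ for any $p'>p$, and interpolating between $\rL^2$ and $\rL^{p'}$ gives $\|u_n-u\|_p\le \|u_n-u\|_2^{\theta}\|u_n-u\|_{p'}^{1-\theta}\to0$. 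In particular $u_n\to u$ in $\rL^8(\mathcal I)$.

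\emph{Step 3: strong convergence.} Test the derivative with $h=u_n-u$:
$$G_\varepsilon'(u_n)[u_n-u]=a_\kappa(u_n,u_n-u)+\int_{\mathcal I}\big(u_n|u_n|^3-Q_\varepsilon\beta(u_n)-q\big)(u_n-u)\d x .$$
The left-hand side is at most $\|G_\varepsilon'(u_n)\|_{*}\,\|u_n-u\|_{1,\kappa}$, which tends to $0$ because $(u_n)$ is bounded. In the integral, the terms with $Q_\varepsilon\beta(u_n)$ and $q$ are bounded in $\rL^2$ and multiply $u_n-u\to0$ in $\rL^2$, so they vanish. For the quartic term, Hölder gives
$$\Big|\int_{\mathcal I} u_n|u_n|^3(u_n-u)\d x\Big|\le \|u_n\|_8^4\,\|u_n-u\|_2 .$$
Since $\|u_n\|_8$ is bounded and $\|u_n-u\|_2\to0$, this also tends to $0$. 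Hence $a_\kappa(u_n,u_n-u)\to 0$. Weak convergence gives $a_\kappa(u,u_n-u)\to0$, and subtracting yields $|u_n-u|_{1,\kappa}^2\to0$. Together with the $\rL^2$ convergence, $u_n\to u$ in $\rH^1_\kappa(\mathcal I)$.

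The step that needs care is the quartic nonlinearity in Step 3. Without Sobolev embeddings into $\rL^\infty$, controlling $u_n|u_n|^3$ depends on the $\rL^p$ embeddings of \autoref{lemma: compact embedding weighted sobolev}, which hold for all finite $p$. The same embeddings also justify that $G_\varepsilon$ is $\rC^1$ on $\rH^1_\kappa(\mathcal I)$, since the Nemytskii map $u\mapsto u|u|^3$ is continuous from $\rL^8$ to $\rL^2$. We use this fact implicitly when writing $G_\varepsilon'$ as above.
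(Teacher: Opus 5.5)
Your proof is correct. It shares the paper's skeleton: an energy bound, then weak and compact convergence, then testing $G_\varepsilon'(u_n)$ with $u_n-u$. It differs in how the quartic term is handled and in how boundedness is obtained.

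The paper uses a monotonicity argument. It tests $A(u_n)-A(u)$ against $u_n-u$, with $A(w)=-(\kappa w')'+|w|^3w$, and discards the nonnegative term $\int_{\mathcal I}(|u_n|^3u_n-|u|^3u)(u_n-u)\d x$. It then only needs $\int_{\mathcal I}|u|^3u\,(u_n-u)\d x\to0$, for which the $\rL^5$ bound supplied by the energy and weak $\rL^5$ convergence suffice.

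You instead treat $u|u|^3$ as a compact lower-order term, bounding it by $\|u_n\|_8^4\|u_n-u\|_2$. This uses the stronger embedding $\rH^1_\kappa(\mathcal I)\hookrightarrow\rL^8(\mathcal I)$ from the weighted Sobolev lemma. Your route is shorter and does not need the monotone structure of the radiative term, but it relies on integrability beyond what the energy controls. The paper's route needs nothing beyond the $\rL^5$ level at which $G_\varepsilon$ is defined, at the price of exploiting monotonicity.

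Your Step 1 is also more direct than the paper's. The coercivity estimate alone bounds $|u_n|_{1,\kappa}$ and $\|u_n\|_5$, hence $\|u_n\|_{1,\kappa}$, so the paper's detour through the weighted Poincar\'e inequality and the mean decomposition is unnecessary.

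Finally, the $\rL^p$ upgrade in your Step 2 is superfluous: Step 3 only uses that $\|u_n\|_8$ is bounded and that $u_n\to u$ in $\rL^2(\mathcal I)$.
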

\begin{proof}
Assume for simplicity that $\mathfrak{e} = 1$. Let $(u_n)_n\subset \rH^1_\kappa(\mathcal{I})$ be a Palais-Smale sequence, i.e.
$$
G_\varepsilon(u_n)\ \text{is bounded},\qquad  G_\varepsilon'(u_n)\to0\ \text{in }(\rH^1_\kappa)^*.
$$

\medskip\noindent\textit{Step 1. Uniform bounds.}
From the growth bound on $B$ there exist constants $C_0,C_1>0$ such that for every $x\in[-1,1]$ and $s\in\mathbb R$
\begin{equation}
    \frac{|s|^5}{5}-Q_\varepsilon(x)B(s)-q s \ge -C_0 - C_1|s|.
\label{eq: growth integrand}
\end{equation}
Moreover, since $\frac{|s|^5}{5}$ dominates the linear term for large $|s|$, there exist constants $C_2,C_3>0$ such that
\begin{equation}
\frac{|s|^5}{5} - Q_\varepsilon(x)B(s) - q s \geq C_2|s|^5-C_3
\qquad \forall s \in \mathbb{R},\ \forall x\in[-1,1].
\label{eq:coercive-integrand-bound-degenerate}
\end{equation}
Thus,
\begin{equation}
G_\varepsilon(u) \geq \frac{1}{2} \int_{\mathcal{I}}\kappa(x)(u'(x))^2\d x + C_2 \| u \|_{ 5}^5
- 2 C_3 \qquad \forall u \in \rH^1_\kappa(\mathcal{I}).
\label{eq: coercive energy bound degenerate}
\end{equation}
Integrating and using that $G_\varepsilon(u_n)$ is bounded from above yields a
uniform $L^5$-bound
\begin{equation}\label{eq: L5 bound}
\|u_n\|_{5}\le M
\end{equation}
for some $M>0$ independent of $n$.

Next, write $u_n=m_n+v_n$ with $m_n=\tfrac12\int_{\mathcal{I}} u_n$ and $\int_{\mathcal{I}} v_n=0$. By \autoref{lem: weighted poincare} (weighted Poincar\'e)
$$
\|v_n\|_{2}^2 \le C\int_{\mathcal{I}}\kappa(x)(v_n')^2\d x \le C\int_{\mathcal{I}}\kappa(x)(u_n')^2\d x.
$$
Hence
$$
\|u_n\|_{2}^2 \le 2\|v_n\|_{2}^2 + 2m_n^2 \leq 2C\|\sqrt\kappa\,u_n'\|_{2}^2 + 2m_n^2.
$$
The uniform $\rL^5$-bound \eqref{eq: L5 bound} implies that $|m_n|$ is uniformly bounded (because $\|u_n\|_{1}\le C\|u_n\|_{5}$ in 1D). Combining this with the coercive gradient term in $G_\varepsilon$ (see \eqref{eq: growth integrand}) yields a quadratic-type inequality in $\|u_n\|_{1,\kappa}$ and therefore the sequence $(u_n)$ is bounded in $\rH^1_\kappa(\mathcal{I})$. Consequently (up to a subsequence)
\begin{equation}\label{ weak conv}
u_n \rightharpoonup u \quad\text{in }\rH^1_\kappa(\mathcal{I}),\qquad
u_n\to u \ \text{in }\rL^2(\mathcal{I})\text{ and a.e. in } \mathcal{I}.
\end{equation}
Note that here we have used the compact embedding $\rH^1_\kappa\hookrightarrow \rL^2$ from \autoref{lemma: compact embedding weighted sobolev}.

\textit{Step 2. Identification of the derivative.}
The G\^ateaux derivative of $G_\varepsilon$ is given, for $w,v\in \rH^1_\kappa(\mathcal{I})$, by
$$
\langle  G_\varepsilon'(w),v\rangle
= \int_{\mathcal{I}}\left(
|w|^3w-Q_\varepsilon(x)\beta(w)-q
\right) v\d x
+
\int_{\mathcal{I}}\kappa(x)w'v'\d x.
$$
Setting $
A(w):= -(\kappa w')' + |w|^3w$ in weak form, and $
L(w):=Q_\varepsilon(x)\beta(w)+q,$ the Palais-Smale condition says
$$
A(u_n)-L(u_n)\to0 \qquad \text{in }(\rH^1_\kappa)^*.
$$

\textit{Step 3. Monotonicity argument.} We prove that $u_n\to u$ strongly in $\rH^1_\kappa(\mathcal{I})$. Since $(u_n)_n$ is
bounded in $\rH^1_\kappa(\mathcal{I})$ (see the end of Step $1$) and in $\rL^5(\mathcal{I})$ thanks to \eqref{eq: L5 bound}, and since
$u_n\to u$ a.e. thanks to \eqref{ weak conv}, we have, up to a subsequence, that
$$
u_n\rightharpoonup u \qquad
\text{in }\rL^5(\mathcal{I}).
$$
We test the difference $A(u_n)-A(u)$ against $u_n-u$. We have
$$
\langle A(u_n)-A(u),u_n-u\rangle = \int_{\mathcal{I}}\kappa(x)(u_n'-u')^2\d x  
+ \int_{\mathcal{I}} (|u_n|^3u_n-|u|^3u ) (u_n - u) \d x.
$$
The second integral is non-negative, because the map
$ s \mapsto |s|^3s$ is monotone increasing on $\mathbb{R}$. Hence
\begin{equation}
\langle A(u_n)-A(u),u_n-u\rangle \geq
\int_{\mathcal{I}}\kappa(x)(u_n'-u')^2\d x.
\label{eq:monotonicity-controls-gradient-degenerate}
\end{equation}

We now show that the left-hand side tends to zero. Since
$G_\varepsilon'(u_n)=A(u_n)-L(u_n) \to 0$ in $(\rH^1_\kappa)^*$ and
$(u_n-u)_n$ is bounded in $\rH^1_\kappa(\mathcal{I})$, we have
$$
\langle A(u_n)-L(u_n),u_n-u\rangle=o(1).
$$
Therefore
\begin{equation*}
\begin{split}
\langle A(u_n)-A(u),u_n-u\rangle
&= \langle A(u_n)-L(u_n),u_n-u\rangle  +
\int_{\mathcal{I}} \left(Q_\varepsilon(x) \beta(u_n)+q\right)(u_n-u)\d x  \\
&\quad
- \langle A(u),u_n-u\rangle .
\end{split}
\end{equation*}
The first term is $o(1)$. The second term tends to zero because
$Q_\varepsilon \beta(u_n)+q$ is bounded in $\rL^\infty(\mathcal{I})$ and
$u_n\to u$ strongly in $\rL^1(\mathcal{I})$. Finally,
$$
\langle A(u),u_n-u\rangle
= \int_{\mathcal{I}}\kappa(x)u'(u_n'-u')\d x
+ \int_{\mathcal{I}} |u|^3u\,(u_n-u)\d x
\to 0.
$$
Indeed, the first term tends to zero by the weak convergence in
$\rH^1_\kappa(\mathcal{I})$, while the second tends to zero because
$|u|^3u\in \rL^{5/4}(\mathcal{I})$ and $u_n\rightharpoonup u$ weakly in
$\rL^5(\mathcal{I})$.

Consequently,
$$
\langle A(u_n)-A(u),u_n -u \rangle \to 0.
$$
By \eqref{eq:monotonicity-controls-gradient-degenerate},
$$
\int_{\mathcal{I}}\kappa(x)(u_n'-u')^2\d x\to0.
$$
Together with the strong convergence $u_n \to u$ in
$\rL^2(\mathcal{I})$ from \eqref{ weak conv}, this gives
$$
u_n \to u \qquad \text{strongly in } \rH^1_\kappa(\mathcal{I}).
$$
Hence every Palais-Smale sequence admits a strongly convergent subsequence. This concludes the proof.

\end{proof}
\subsubsection{Perturbative saddle and mountain-pass critical point}

The Palais-Smale result in \autoref{prop:PS-degenerate} applies to spatially heterogeneous insolation.
To construct the reference three-climate configuration, however, we begin
with a homogeneous insolation $Q_*>0$. In this case constant functions are
natural candidates for equilibria, and their variational character is
completely determined by a scalar potential. More precisely, a constant function $u \equiv \theta$ is a critical point of
the homogeneous energy if and only if
$$
\mathfrak{e}\theta|\theta|^3-Q_*\beta(\theta)-q=0.
$$

We recall that we denote by $A_\kappa$ the self-adjoint operator in $\rL^2(\mathcal{I})$ associated
with the closed quadratic form
$$
a_\kappa(u,v)=
\int_{\mathcal{I}}\kappa(x)u'(x)v'(x)\d x,
\qquad
D(a_\kappa)=\rH^1_\kappa(\mathcal{I}).
$$
Equivalently, $A_\kappa u=-(\kappa u')'$ in the weak sense, with the natural
degenerate no-flux condition encoded by the form domain. Since the embedding
$\rH^1_\kappa(\mathcal{I})\hookrightarrow \rL^2(\mathcal{I})$ is compact, $A_\kappa$ has compact
resolvent. Its first eigenvalue is $0$, corresponding to the constant
functions. Moreover, $\lambda_1^\kappa$ denotes the first positive eigenvalue,
namely
\begin{equation*}
\lambda_1^\kappa := \inf \left\{ \frac{\int_{\mathcal{I}}\kappa(x)(v'(x))^2\d x}
{\int_{\mathcal{I}}v(x)^2\d x} : v \in \rH^1_\kappa(\mathcal{I}),\ v\not\equiv0,\
\int_{\mathcal{I}}v(x)\d x=0 \right\}.
\end{equation*}
By the weighted Poincare inequality, $\lambda_1^\kappa>0$.

The next result determines the variational character and Morse indices of the
three homogeneous equilibria.
\begin{lem}[Homogeneous equilibria and Morse indices in 1D]
\label{lem:homogeneous-equilibria-morse-degenerate}
Assume \autoref{ass:homogeneous-three-well}. Then the constant functions
$$
u_S^0 \equiv \theta_S,\qquad u_M^0 \equiv \theta_M, \qquad u_W^0 \equiv \theta_W
$$
are critical points of $G_{0}$. Moreover, $u_S^0$ and $u_W^0$ are
strict local minimisers, while $u_M^0$ is non-degenerate and has Morse index
one.
\end{lem}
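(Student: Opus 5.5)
The plan is to reduce everything to the Hessian of $G_0$ at constants, diagonalised by splitting $\rH^1_\kappa(\mathcal I)$ into constants and mean-zero functions.

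\emph{Criticality.} For $u\equiv\theta_i$ we have $u'=0$, so $a_\kappa(u,v)=0$. Hence
\[
DG_0(\theta_i)[v]=R'(\theta_i)\int_{\mathcal I}v\d x ,
\]
which vanishes by \eqref{eq:main-results-three-critical-points}. Here $\theta_i>0$, so $\beta(\theta_i)=\beta_{\mathrm{ph}}(\theta_i)$.

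\emph{Second variation.} Since $\beta\in\rC^1_b$ and $s\mapsto |s|^3s$ is $\rC^1$, the functional $G_0$ is $\rC^2$ on $\rH^1_\kappa(\mathcal I)$. This uses the embedding into $\rL^p$ from \autoref{lemma: compact embedding weighted sobolev} to control the quintic term. At a constant,
\[
D^2G_0(\theta_i)[v,w]=a_\kappa(v,w)+\mu_i\int_{\mathcal I}vw\d x .
\]
Write $v=m+\tilde v$ with $m=\frac12\int_{\mathcal I} v\d x$ and $\int_{\mathcal I}\tilde v\d x=0$. Since $a_\kappa(m,\cdot)=0$ and $\int_{\mathcal I}m\tilde v\d x=0$, the quadratic form splits as
\[
D^2G_0(\theta_i)[v,v]=2\mu_i m^2+a_\kappa(\tilde v,\tilde v)+\mu_i\|\tilde v\|_2^2 .
\]

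\emph{Coercivity on the mean-zero part.} By the definition of $\lambda_1^\kappa$,
\[
a_\kappa(\tilde v,\tilde v)+\mu\|\tilde v\|_2^2\ge c\,\|\tilde v\|_{1,\kappa}^2
\]
whenever $\mu>-\lambda_1^\kappa$. To see this, interpolate: for $t\in(0,1)$,
\[
a_\kappa+\mu\|\cdot\|_2^2\ge t\,a_\kappa+\bigl((1-t)\lambda_1^\kappa+\mu\bigr)\|\cdot\|_2^2 ,
\]
and choose $t$ small enough that $(1-t)\lambda_1^\kappa+\mu>0$. This applies to $\mu_S,\mu_W>0$ and to $\mu_M>-\lambda_1^\kappa$.

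\emph{Cold and warm states.} For $i\in\{S,W\}$, $\mu_i>0$ makes the Hessian coercive on all of $\rH^1_\kappa(\mathcal I)$, i.e. $D^2G_0(\theta_i)[v,v]\ge c\|v\|_{1,\kappa}^2$. Taylor's formula gives
\[
G_0(\theta_i+v)-G_0(\theta_i)=\tfrac12 D^2G_0(\theta_i)[v,v]+o(\|v\|_{1,\kappa}^2),
\]
so $\theta_i$ is a strict local minimiser, with the quadratic lower bound $\ge \frac c4\|v\|^2$ near $\theta_i$.

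This Taylor remainder is the main obstacle: we need the Hessian to be continuous at $\theta_i$ in operator norm. I would handle it directly. For $\|h\|_{1,\kappa}\le 1$, the continuity of $\beta'$, its boundedness, and dominated convergence together with $\rL^p$ embeddings show that
\[
\sup_{\|h\|\le1}\int_{\mathcal I}\bigl|\beta'(\theta_i+v)-\beta'(\theta_i)\bigr|h^2\d x\to0
\]
as $v\to0$. The quartic term is handled by Hölder with $\rL^4$ and $\rL^6$ norms.

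Alternatively, I would avoid the $\rC^2$ subtlety by a pointwise Taylor expansion of $R$: write $R(\theta_i+s)-R(\theta_i)=\frac12 R''(\theta_i+\tau s)s^2$. The dangerous large-$|s|$ region is controlled because $|s|^5$ dominates there. The small-$s$ region uses the uniform continuity of $R''$ on bounded sets together with a measure argument, since $v\to0$ in $\rL^2$ forces $|\{|v|>\delta\}|\to0$.

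\emph{Middle state.} For $i=M$, the constant direction $E=\mathrm{span}\{1\}$ gives $D^2G_0[1,1]=2\mu_M<0$, so the Morse index is at least $1$. It is at most $1$: any subspace of dimension $2$ contains a nonzero mean-zero vector, on which the form is positive. Non-degeneracy follows by testing with $w=1$ and $w=\tilde v$ separately. If $D^2G_0(\theta_M)[v,w]=0$ for all $w$, then $w=1$ gives $2\mu_M m=0$, so $m=0$. Then $w=\tilde v$ gives $\tilde v=0$ by the coercivity on the mean-zero part.
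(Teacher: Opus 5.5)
Your proof is correct and follows the same route as the paper: criticality from $R'(\theta_i)=0$, the splitting of the Hessian $a_\kappa(v,v)+\mu_i\|v\|_2^2$ into its constant and mean-zero parts, positivity on mean-zero functions via $\lambda_1^\kappa$, and non-degeneracy by testing first with $w=1$ and then with the mean-zero part. Beyond the paper, you justify the passage from a coercive Hessian to strict local minimality (the $\rC^2$ Taylor remainder, which the paper asserts without argument), and you make explicit the codimension-one argument behind the upper bound on the Morse index.
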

\begin{proof}
Assume for simplicity that $\mathfrak{e } = 1.$ Since $R'(\theta_i)=0$, we have
$$
\theta_i \abs{\theta_i}^3 - Q_* \beta( \theta_i) - q = 0.
$$
Because the functions $u_i^0 \equiv \theta_i$ are constant, their weighted
gradients vanish. Hence each $u_i^0$ satisfies the stationary equation
$$
-(\kappa (u_i^0)')' + u_i^0\abs{u_i^0}^3 - Q_*\beta(u_i^0) - q =0
$$
in the weak sense. Thus each $u_i^0$ is a critical point of
$G_{0}$.

The second variation at $u_i^0$ is
$$
D^2G_{0}(u_i^0)[v,v] =\int_{\mathcal{I}} \kappa ( x ) (v'( x ) ) ^2\d x + \mu_i \int_{\mathcal{I}} v( x ) ^2\d x.
$$
If $i=S$ or $i=W$, then $\mu_i>0$. Hence
$$
D^2G_{0}(u_i^0)[v,v] \geq c_i\|v\|_{\rH^1_\kappa(\mathcal{I})}^2
$$
for some $c_i>0$. Therefore $u_S^0$ and $u_W^0$ are strict local minimisers.

It remains to analyse $u_M^0$. Consider the decomposition 
$$
v= \bar{v}+v_0, \qquad \bar{v}: = \frac{1}{2} \int_{\mathcal{I}} v(x)\d x, \qquad \int_{\mathcal{I}} v_0(x) \d x=0.
$$
On the constant direction $v \equiv1$, the second variation is negative,
because
$$
D^2 G_{0}(u_M^0)[1,1] = 2 \mu_M < 0.
$$
On the other hand, if $\int_{\mathcal{I}} v(x)\d x=0$, then by the definition of
$\lambda_1^\kappa$,
$$
\int_{\mathcal{I}}\kappa(x )( v' ( x ) )^2\d x \geq \lambda_1^\kappa \int_{\mathcal{I}} v ( x ) ^2 \d x.$$
Therefore
$$
D^2  G_{0} ( u_M^0) [v,v] \geq (\lambda_1^\kappa + \mu_M) \int_{\mathcal{I}} v( x )^2\d x.
$$
Since $-\lambda_1^\kappa < \mu_M < 0$, the right-hand side is strictly positive
for every non-zero mean-zero $v$. Hence the negative spectral subspace of the quadratic form is one-dimensional
and is generated by the constant functions (in other words, the quadratic form
has exactly one negative eigenvalue). It follows that $u_M^0$ has Morse index
one.

Lastly, to prove non-degeneracy, we consider the linearisation of the Euler-Lagrange
operator at $u_M^0$. This linearised operator is
$$
L_M v:=-(\kappa v')' + \mu_M v .
$$
Let $v\in \rH^1_\kappa(\mathcal{I})$ belong to its kernel in the weak sense, which means that
$$
\int_{\mathcal{I}} \kappa  v' \phi'\d x + \mu_M \int_{\mathcal{I}} v\phi\d x =0 \qquad
\text{for every } \phi \in \rH^1_\kappa(\mathcal{I}).
$$
Taking $\phi \equiv1 $, and using $\mu_M\neq0$, gives
$$
\int_{\mathcal{I}} v \d x = 0.
$$
Thus $v$ has zero mean. Taking now $\phi=v$, and using the weighted
Poincar\'e inequality, we obtain
$$
0 = \int_{\mathcal{I}} \kappa |v'|^2\d x + \mu_M \int_{\mathcal{I}} v^2\d x \geq (\lambda_1^\kappa + \mu_M ) \int_{\mathcal{I}} v^2\d x.
$$
Since $\lambda_1^\kappa + \mu_M >0 $, it follows that $v=0$. Hence
$\ker{L}_M=\{0\}$, and therefore $u_M^0$ is non-degenerate.
\end{proof}
\noindent
The following estimate provides the pointwise control needed for the implicit
function theorem.

\begin{lem}[Boundedness of the operator domain]
\label{lem:domain-embeds-Linfty-degenerate}
Let $\kappa(x)=1-x^2$, and let $A_\kappa$ be the self-adjoint operator in
$\rL^2(\mathcal{I})$ associated with the closed form
$$
a_\kappa(u,v) = \int_{\mathcal{I}}\kappa(x)u'(x)v'(x)\d x, \qquad D(a_\kappa)=\rH^1_\kappa(\mathcal{I}).
$$
Then $
D(A_\kappa)\hookrightarrow \rL^\infty(\mathcal{I})$ continuously. More precisely, there exists a constant $C>0$ such that
$$
\|u\|_{\infty } \leq C\left( \|u\|_{2}+\|A_\kappa u\|_{2}
\right) \qquad \forall u \in D(A_\kappa).
$$
\end{lem}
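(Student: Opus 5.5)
We will prove the estimate by a direct one-dimensional argument, controlling the flux $\kappa u'$ pointwise and then integrating. Fix $u\in D(A_\kappa)$ and set $f:=A_\kappa u\in\rL^2(\mathcal I)$, so that $-(\kappa u')'=f$ in the distributional sense on $\mathcal I$. By interior regularity (as in Step 3 of \autoref{prop: existence minim}), $u\in \rH^2_{\mathrm{loc}}(\mathcal I)$, and the flux $w:=\kappa u'$ belongs to $\rW^{1,1}_{\mathrm{loc}}$ with $w'=-f$. Since $f\in\rL^2(\mathcal I)\subset \rL^1(\mathcal I)$, $w$ extends continuously to $[-1,1]$.

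\textbf{Step 1: vanishing of the flux at the poles.} We first show that the form domain encodes the no-flux condition $w(\pm1)=0$. Suppose, for instance, that $w(1)=\ell\neq0$. Then near $x=1$ we would have $|u'(x)|\ge |\ell|/(2\kappa(x))\ge |\ell|/(4(1-x))$. It follows that
\[
\int \kappa|u'|^2\,\d x\ \ge\ c\int \frac{\d x}{1-x}=\infty,
\]
which contradicts $u\in \rH^1_\kappa(\mathcal I)$. Hence $w(\pm1)=0$. Alternatively, one can test the identity $a_\kappa(u,\phi)=(f,\phi)$ with $\phi\equiv1$ and with cutoffs; we regard this step as the main obstacle, since it is where the self-adjoint realisation enters, and the blow-up argument above is the approach we would try first.

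\textbf{Step 2: pointwise bound on the flux and on $u'$.} From $w(x)=\int_x^1 f$ and Cauchy--Schwarz we get
\[
|w(x)|\le \sqrt{1-x}\,\|f\|_2,
\]
and symmetrically $|w(x)|\le\sqrt{1+x}\,\|f\|_2$ near $x=-1$. Consequently, near $x=1$,
\[
|u'(x)|=\frac{|w(x)|}{(1-x)(1+x)}\le \frac{\|f\|_2}{\sqrt{1-x}},
\]
and the analogous bound holds near $-1$. Thus $|u'|\le C\|f\|_2\,\bigl((1-x)^{-1/2}+(1+x)^{-1/2}\bigr)$ on $\mathcal I$, which gives $u'\in \rL^1(\mathcal I)$ with $\|u'\|_1\le C\|f\|_2$.

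\textbf{Step 3: conclusion.} Since $u$ is absolutely continuous with an integrable derivative, for any $x,y\in\mathcal I$ we have $|u(x)-u(y)|\le \|u'\|_1$. Averaging over $y\in\mathcal I$ gives
\[
|u(x)|\le \tfrac12\|u\|_1+\|u'\|_1\le C\bigl(\|u\|_2+\|A_\kappa u\|_2\bigr),
\]
which is the claimed estimate. As a byproduct, $u$ extends continuously to $[-1,1]$.
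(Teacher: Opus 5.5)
Your proof is correct and follows essentially the same route as the paper: the no-flux traces $(\kappa u')(\pm1)=0$, the Cauchy--Schwarz bound $|\kappa u'(x)|\le\sqrt{1\mp x}\,\|A_\kappa u\|_2$, integrability of $u'$, and a pointwise bound anchored at a value of $u$ controlled by $\|u\|_2$. The one difference is Step 1: you derive the no-flux condition from $u\in\rH^1_\kappa$ alone, since a nonzero endpoint flux would make $\kappa|u'|^2=|\kappa u'|^2/\kappa$ non-integrable near that pole, whereas the paper tests the form identity against smooth functions with arbitrary endpoint values; both arguments are valid.
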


\begin{proof}
Let $u\in D(A_\kappa)$ and set $f:=A_\kappa u$. By the definition of the
operator associated with the form,
$$
\int_{\mathcal{I}}\kappa(x)u'(x)v'(x)\d x = \int_{\mathcal{I}} f(x)v(x)\d x \qquad \forall v\in \rH^1_\kappa(\mathcal{I}).
$$
Thus, in the sense of distributions on $\mathcal{I}$,
\begin{equation}
    -(\kappa u')'=f.
    \label{eq:identity k u prime and f}
\end{equation}
In particular, $\kappa u'\in \rH^1_{\mathrm{loc}}(\mathcal{I})$. Since $\kappa u'\in \rL^2(\mathcal{I})$ and $(\kappa u')'=-f\in \rL^2(\mathcal{I})$ in the
sense of distributions, we have $\kappa u'\in \rH^1(\mathcal{I})$. Testing the weak
identity against smooth functions with arbitrary endpoint values gives the
natural no-flux traces
$$
(\kappa u')(-1)=(\kappa u')(1)=0.
$$
Set $ y(x):=\kappa(x)u'(x).$ Since $y(1)=0$, for $x\in(0,1)$ we have, using Cauchy-Schwarz inequality and \eqref{eq:identity k u prime and f},
$$
|y(x)| = \left|\int_x^1 y'(s)\d s\right| \leq (1-x)^{1/2}\|y'\|_{2} = (1-x)^{1/2}\|f\|_{2}.
$$
Because $\kappa(x)=(1-x)(1+x)$, this gives, for $x\in(0,1)$,
$$
|u'(x)| = \frac{|y(x)|}{\kappa(x)} \leq \frac{C\|f\|_{2}}{(1-x)^{1/2}}.
$$
The right-hand side is integrable near $x=1$. The same argument at the left
endpoint, using $y(-1)=0$, gives
$$
|u'(x)|\leq \frac{C\|f\|_{\rL^2(\mathcal{I})}}{(1+x)^{1/2}} \qquad \text{for }x\in(-1,0).
$$
Hence $u'\in \rL^1(\mathcal{I})$ and $ \|u'\|_{1} \leq C\|A_\kappa u\|_{2}.$ Choose $x_0\in \mathcal{I}$ such that $ |u(x_0)|\leq C\|u\|_{2}.$ Then, for every $x\in \mathcal{I}$,
$$
\vert  u (x) \vert  \leq \abs{u(x_0)}+\int_{\mathcal{I}} \vert u'(s) \vert \d s \leq C\left( \| u  \|_{2} + \| A_\kappa u\|_{2} \right).
$$
This concludes the proof.
\end{proof}
\noindent
Having identified the homogeneous three-climate configuration, we now ask
whether it persists when the insolation is allowed to vary weakly in space. For $Q_1\in \rC([-1,1])$, our analysis is based on the spatial heterogeneous insolation of the form
\begin{equation*}
Q_\varepsilon(x) = Q_*+\varepsilon Q_1(x).
\end{equation*}
For sufficiently small $|\varepsilon|$, the function $Q_\varepsilon$
remains strictly positive on $[-1,1]$, which gives physical meaning to the insolation.

We now use the implicit function theorem and spectral continuity to continue
the three homogeneous equilibria under small perturbations.
\begin{thm}[Persistence of the three-climate structure in the degenerate case]
\label{thm: existence three climate degenerate small eps}
Assume \autoref{ass:homogeneous-three-well}. Let
$Q_1\in \rC([-1,1])$, and let $Q_\varepsilon=Q_*+\varepsilon Q_1$. Then there
exists $\varepsilon_0 > 0$ such that, for every
$\abs{\varepsilon} < \varepsilon_0$, the stationary degenerate EBM
$$
-(\kappa(x)u')' +\mathfrak{e}u\abs{u}^3- Q_\varepsilon(x) \beta(u)-q=0
$$
admits three solutions
$$
u_S^\varepsilon,\qquad u_M^\varepsilon,\qquad u_W^\varepsilon
$$
in the operator domain $D(A_\kappa)$. Moreover,
\begin{equation}
u_i^\varepsilon\to \theta_i \qquad \text{in }D(A_\kappa) \quad\text{as }\varepsilon\to0, \qquad i\in\{S,M,W\}.
\label{eq: convergence degenerate eps small}
\end{equation}
The equilibria $u_S^\varepsilon$ and $u_W^\varepsilon$ are strict local minimisers of $G_{  \varepsilon}$ in $\rH^1_\kappa(\mathcal{I})$, while
$u_M^\varepsilon$ is non-degenerate and has Morse index one. In particular, for every sufficiently small $\abs{\varepsilon}$, the degenerate one-dimensional
EBM has two stable variational equilibria and one unstable Morse-index-one
transition equilibrium.
\end{thm}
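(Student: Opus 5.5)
We apply the implicit function theorem in the operator-domain topology. Let $Y:=D(A_\kappa)$ with graph norm $\|u\|_Y:=\|u\|_2+\|A_\kappa u\|_2$, and define
$$
F:\R\times Y\to \rL^2(\mathcal I),\qquad F(\varepsilon,u):=A_\kappa u+\mathfrak e\,u|u|^3-(Q_*+\varepsilon Q_1)\beta(u)-q .
$$
By \autoref{lem:domain-embeds-Linfty-degenerate}, $Y\hookrightarrow \rL^\infty(\mathcal I)$ continuously. Hence the Nemytskii maps $u\mapsto u|u|^3$ and $u\mapsto Q_\varepsilon\beta(u)$ are $\rC^1$ from $Y$ into $\rL^\infty\subset\rL^2$. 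This uses $s\mapsto s|s|^3\in\rC^1$ and $\beta\in\rC^1_b$, together with uniform continuity of $\beta'$ on bounded sets. The dependence on $\varepsilon$ is affine, so $F$ is $\rC^1$. Since $R'(\theta_i)=0$, we have $F(0,\theta_i)=0$, and
$$
D_uF(0,\theta_i)=A_\kappa+\mu_i:Y\to\rL^2(\mathcal I).
$$
The spectrum of $A_\kappa$ is discrete (compact resolvent), with eigenvalue $0$ on the constants and the rest of the spectrum in $[\lambda_1^\kappa,\infty)$. \autoref{ass:homogeneous-three-well} gives $\mu_i\neq0$, and for $i=M$ also $-\mu_M<\lambda_1^\kappa$. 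Therefore $-\mu_i\notin\sigma(A_\kappa)$ for every $i$, so $A_\kappa+\mu_i$ is an isomorphism $Y\to\rL^2$. The IFT then yields $\rC^1$ branches $\varepsilon\mapsto u_i^\varepsilon\in Y$ with $F(\varepsilon,u_i^\varepsilon)=0$ and $u_i^\varepsilon\to\theta_i$ in $Y$, which is \eqref{eq: convergence degenerate eps small}. Solutions in $Y$ satisfy the weak Euler--Lagrange equation, so they are critical points of $G_\varepsilon$ on $\rH^1_\kappa$.

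For the Morse structure, consider the Hessian form
$$
h_i^\varepsilon[v,v]:=D^2G_\varepsilon(u_i^\varepsilon)[v,v]=a_\kappa(v,v)+\int_{\mathcal I}p_i^\varepsilon v^2\,dx,\qquad p_i^\varepsilon:=4\mathfrak e|u_i^\varepsilon|^3-Q_\varepsilon\beta'(u_i^\varepsilon).
$$
Since $u_i^\varepsilon\to\theta_i$ in $\rL^\infty$ and $\beta'$ is continuous, $\|p_i^\varepsilon-\mu_i\|_\infty\to0$. Consequently
$$
|h_i^\varepsilon[v,v]-h_i^0[v,v]|\le \delta(\varepsilon)\|v\|_2^2,\qquad \delta(\varepsilon)\to0.
$$

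For $i=S,W$, \autoref{lem:homogeneous-equilibria-morse-degenerate} gives $h_i^0[v,v]\ge c_i\|v\|_{1,\kappa}^2$. So $h_i^\varepsilon$ remains coercive with constant $c_i/2$ once $\delta(\varepsilon)<c_i/2$. Strict local minimality then follows from a second-order Taylor estimate in $\rH^1_\kappa$, which I expect to be the main obstacle: $G_\varepsilon$ is not $\rC^2$ on $\rH^1_\kappa$ with an $\rL^\infty$-uniform remainder, since $\rH^1_\kappa\not\hookrightarrow\rL^\infty$.

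I would first try to avoid Taylor's formula entirely and argue as follows. Write
$$
G_\varepsilon(u_i^\varepsilon+w)-G_\varepsilon(u_i^\varepsilon)=\tfrac12 a_\kappa(w,w)+\int_{\mathcal I}\Bigl[\Phi(x,u_i^\varepsilon+w)-\Phi(x,u_i^\varepsilon)-\partial_s\Phi(x,u_i^\varepsilon)w\Bigr]dx,
$$
where $\Phi(x,s):=\mathfrak e|s|^5/5-Q_\varepsilon(x)B(s)-qs$, using the Euler--Lagrange equation to cancel the first-order term. The second derivative in $s$ is
$$
\partial_s^2\Phi=4\mathfrak e|s|^3-Q_\varepsilon\beta'(s)\ge -\|Q_\varepsilon\|_\infty\|\beta'\|_\infty,
$$
and it is close to $p_i^\varepsilon$ for $|s-u_i^\varepsilon|\le\eta$. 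Splitting the integral into $\{|w|\le\eta\}$ and $\{|w|>\eta\}$, the first set contributes at least $\tfrac12(p_i^\varepsilon-o_\eta(1))w^2$. On the second set the quintic growth makes the integrand at least $-Cw^2\mathds 1_{\{|w|>\eta\}}$, and
$$
\int_{\{|w|>\eta\}}w^2\le \eta^{-p+2}\|w\|_p^p=o(\|w\|_{1,\kappa}^2)
$$
for some $p>2$, by \autoref{lemma: compact embedding weighted sobolev}. Combining the two pieces with coercivity gives
$$
G_\varepsilon(u_i^\varepsilon+w)-G_\varepsilon(u_i^\varepsilon)\ge c\|w\|_{1,\kappa}^2\quad\text{for small }\|w\|_{1,\kappa}.
$$
This also yields the uniform barrier on spheres needed later.

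For $i=M$, the quadratic form $h_M^\varepsilon$ is associated with the self-adjoint operator $A_\kappa+p_M^\varepsilon$, a bounded perturbation of $A_\kappa+\mu_M$ of norm at most $\delta(\varepsilon)$. The unperturbed operator has exactly one eigenvalue, $\mu_M<0$, below $\gamma:=\min\{\lambda_1^\kappa+\mu_M,|\mu_M|\}$ in absolute value, and no spectrum in $(-\gamma,\gamma)$ besides it, apart from the gap around $0$. By min--max, for $\delta(\varepsilon)<\gamma/2$ the perturbed operator has exactly one eigenvalue below $-\gamma/2$, none in $(-\gamma/2,\gamma/2)$, and the rest $\ge\gamma/2$. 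Hence the Morse index is one: take $E=\mathrm{span}\{1\}$, and note that any two-dimensional negative subspace would meet the mean-zero space, where $h_M^\varepsilon\ge(\lambda_1^\kappa+\mu_M-\delta)\|v\|_2^2>0$. The kernel is trivial since $0$ is not in the spectrum, so $u_M^\varepsilon$ is non-degenerate. Finally, choose $\varepsilon_0$ smaller than all the thresholds above and small enough that $Q_\varepsilon>0$.
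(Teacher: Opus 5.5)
Your proposal is correct and follows essentially the same route as the paper. Both arguments apply the implicit function theorem to $A_\kappa u+\mathfrak e u|u|^3-Q_\varepsilon\beta(u)-q$ on $D(A_\kappa)\subset\rL^\infty(\mathcal I)$, obtain invertibility of $A_\kappa+\mu_i$ from the spectral gap, and use an $\rL^\infty$-small perturbation of the Hessian potential to keep coercivity at $u_S^\varepsilon,u_W^\varepsilon$ and the single negative (constant) direction at $u_M^\varepsilon$. Your splitting argument for strict local minimality is valid, but the obstacle it avoids is not real: although $\rH^1_\kappa(\mathcal I)$ does not embed into $\rL^\infty$, $G_\varepsilon$ is still $\rC^2$ on $\rH^1_\kappa(\mathcal I)$, because the quintic term is $\rC^2$ on $\rL^5$, the $B$-term is $\rC^2$ on $\rL^p$ for any $p>2$, and $\rH^1_\kappa$ embeds continuously into both. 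So the standard second-order Taylor estimate, which the paper uses implicitly, already turns Hessian coercivity into strict local minimality.
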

\begin{proof}
Assume for simplicity that $\mathfrak{e} = 1$. We use the Hilbert-space formulation associated with the self-adjoint operator
$A_\kappa$. Define
$$
\mathcal{F}: \mathcal{U} \subset \mathbb{R} \times D(A_{\kappa}) \to \rL^2(\mathcal{I}),\qquad \mathcal{F}(\varepsilon,u) :=A_\kappa u+ u\abs{u}^3 -Q_\varepsilon(x) \beta(u)
-q,
$$
where $\mathcal{U}$ is an open a neighbourhood of $(0,\theta_i)$. By \autoref{lem:domain-embeds-Linfty-degenerate}, it holds
$D(A_\kappa) \hookrightarrow \rL^\infty (\mathcal{I})$. Consequently,
if $u$ remains in a bounded subset of $D(A_\kappa)$, then $u$ remains bounded
in $\rL^\infty(\mathcal{I})$. Since the maps
$$
s \mapsto s\abs{s}^3 \qquad\text{and} \qquad s \mapsto \beta(s)
$$
are $\rC^1$ on bounded intervals, the corresponding Nemytskii maps are $\rC^1$
from bounded subsets of $D(A_\kappa)$ into $\rL^2(\mathcal{I})$. Hence
$\mathcal{F}$ is $\rC^1$ near each $(0,\theta_i)$. Moreover,
$$
\mathcal{F}(0,\theta_i)=0, \qquad i\in\{S,M,W\}.
$$
The derivative with respect to $u$ at $(0,\theta_i)$ is
$$
D_u\mathcal{F}(0,\theta_i)v = A_\kappa v+\mu_i v,
$$
where $\mu_i$ is defined in \eqref{eq:mu-i-definition-degenerate}.

We claim that $A_\kappa+\mu_i$ is an isomorphism from $D(A_\kappa)$ to
$\rL^2(\mathcal{I})$. For $i=S,W$, this follows from $\mu_i>0$ and the nonnegativity of
$A_\kappa$. For $i=M$, the spectrum of $A_\kappa$ starts with $0$ and its first
positive eigenvalue is $\lambda_1^\kappa$. Since
$$
-\lambda_1^\kappa<\mu_M<0,
$$
we have $- \mu_M \in (0 , \lambda_1^\kappa)$, and therefore $-\mu_M$ is not an
eigenvalue of $A_\kappa$. Hence $A_\kappa+\mu_M$ has trivial kernel and is
invertible.

The implicit function theorem gives, for each $i\in\{S,M,W\}$, a unique
$C^1$ branch $
\varepsilon \mapsto u_i^\varepsilon \in D(A_\kappa)$ defined for $\abs{\varepsilon}$ sufficiently small, such that
$$
u_i^0\equiv\theta_i, \qquad \mathcal{F}( \varepsilon , u_i^\varepsilon)=0.
$$
This proves the existence of the three equilibria and the convergence
\eqref{eq: convergence degenerate eps small}.

It remains to prove the variational character of the three branches. The
second variation of $G_{ \varepsilon}$ at $ u_i^\varepsilon$ is
$$
D^2 G_{ \varepsilon} ( u_i^\varepsilon )[ v , v ] = \int_{\mathcal{I}}\kappa(x)(v'(x))^2\d x + \int_{\mathcal{I}}
a_i^\varepsilon ( x ) v ( x )^2 \d x,
$$
where $
a_i^\varepsilon(x) := 4\vert u_i^\varepsilon( x ) \vert ^3 - Q_\varepsilon(x) \beta ' (u_i^\varepsilon(x)).$ By \eqref{eq: convergence degenerate eps small},
$$
a_i^\varepsilon\to \mu_i \qquad \text{in }\rL^\infty(\mathcal{I}) \quad \text{as } \varepsilon \to 0.
$$
Therefore the spectra of the self-adjoint operators $
A_\kappa+a_i^\varepsilon(x)$ converge to the spectra of $A_\kappa+\mu_i$, in the sense of convergence of
isolated eigenvalues with multiplicity.

For $i=S,W$, the operator $A_\kappa+\mu_i$ is strictly positive. Hence, for
$\abs{\varepsilon}$ sufficiently small, $A_\kappa+a_i^\varepsilon(x)$ remains
strictly positive. Consequently, there exists $c_i>0$ such that
$$
D^2 G_{\varepsilon}(u_i^\varepsilon)[v,v] \geq c_i \| v \|_{\rH^1_\kappa(\mathcal{I})}^2 \qquad \forall v \in \rH^1_\kappa(\mathcal{I}).
$$
Thus $u_S^\varepsilon$ and $u_W^\varepsilon$ are strict local minimisers.

For $i=M$, the operator $A_\kappa+\mu_M$ has exactly one negative eigenvalue
and no kernel, by \autoref{lem:homogeneous-equilibria-morse-degenerate}.
Since the spectrum varies continuously under the perturbation
$a_M^\varepsilon - \mu_M \to 0$ in $\rL^\infty(\mathcal{I})$, the perturbed operator
$A_\kappa + a_M^\varepsilon (x ) $ also has exactly one negative eigenvalue and no
kernel, for all sufficiently small $\abs{\varepsilon}$. Hence
$u_M^\varepsilon$ is non-degenerate and has Morse index one.
\end{proof}

\begin{cor}[Mountain-pass critical point between the stable branches]
\label{cor:mountain-pass-between-branches-degenerate}
Assume the hypotheses of \autoref{thm: existence three climate degenerate small eps}. Then, for every sufficiently
small $\abs{\varepsilon}$, the functional $G_{\varepsilon}$ admits a
mountain-pass critical point $
u_{MP}^\varepsilon \in \rH^1_\kappa(\mathcal{I})$ between the two strict local minimisers $u_S^\varepsilon$ and
$u_W^\varepsilon$.

More precisely, let $
\Gamma_\varepsilon := \left\lbrace  \gamma \in \rC( [0,1] ; \rH^1_\kappa(\mathcal{I})): \gamma(0) = u_S^\varepsilon,\  \gamma(1) = u_W^\varepsilon \right\rbrace,$ and define the minimax level
\begin{equation}
c_\varepsilon :=\inf_{\gamma\in\Gamma_\varepsilon} \sup_{ t \in [0,1]} G_{ \varepsilon}( \gamma( t ) ).
\label{eq: MP level degenerate}
\end{equation}
Then there exists $ u_{MP}^\varepsilon \in \rH^1_\kappa(\mathcal{I})$ such that
$$
G_{\varepsilon}'( u_{ MP }^\varepsilon)=0, \qquad G_{\varepsilon}(u_{M P }^\varepsilon) = c_\varepsilon.
$$
Moreover $
c_\varepsilon> \max \left\lbrace  G_{\varepsilon} (u_S^\varepsilon ) , G_{ \varepsilon }( u_W^\varepsilon )
 \right \rbrace ,$ and therefore $
u_{MP}^\varepsilon \notin \{ u_S^\varepsilon, u_W^\varepsilon\}.$
\end{cor}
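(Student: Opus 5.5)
The plan is to apply \autoref{cor:two-local-minima-mpt} with $X=\rH^1_\kappa(\mathcal{I})$, $\Phi=G_\varepsilon$, $u_0=u_S^\varepsilon$ and $u_1=u_W^\varepsilon$. The Palais-Smale condition is already given by \autoref{prop:PS-degenerate}, because $Q_\varepsilon\in\rC([-1,1])\subset\rL^\infty(\mathcal{I})$. The functional $G_\varepsilon$ is $\rC^1$ on $\rH^1_\kappa(\mathcal{I})$: this follows from the continuous embedding $\rH^1_\kappa\hookrightarrow\rL^5$ of \autoref{lemma: compact embedding weighted sobolev} together with the growth of the integrand. What remains is to verify the barrier hypotheses, i.e.\ to find disjoint balls around $u_S^\varepsilon$ and $u_W^\varepsilon$ on whose boundaries $G_\varepsilon$ exceeds its value at the centre by a positive amount.

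For the barriers I would start from the coercivity $D^2G_\varepsilon(u_i^\varepsilon)[v,v]\geq c_i\|v\|_{1,\kappa}^2$ for $i=S,W$, which was established in the proof of \autoref{thm: existence three climate degenerate small eps}. Since $u_i^\varepsilon$ is critical, Taylor's formula gives
\[
G_\varepsilon(u_i^\varepsilon+v)-G_\varepsilon(u_i^\varepsilon)=\tfrac12 D^2G_\varepsilon(u_i^\varepsilon)[v,v]+\int_{\mathcal{I}}\bigl(\text{cubic remainder}\bigr)\d x .
\]
The delicate step is bounding the remainder by $o(\|v\|_{1,\kappa}^2)$. The energy space does not embed into $\rL^\infty$, and $\beta$ is only $\rC^1$, so $G_\varepsilon$ need not be $\rC^2$ in the Fréchet sense on $\rH^1_\kappa$. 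I would handle the two nonlinear terms separately. For the quintic term $\mathfrak e|s|^5/5$, the function is $\rC^2$ with $\bigl|\,|a+s|^3-|a|^3\bigr|\le C(|a|^2|s|+|s|^3)$, and $u_i^\varepsilon$ is bounded in $\rL^\infty$ by \autoref{lem:domain-embeds-Linfty-degenerate}. Using the $\rL^p$ embeddings, the remainder is then $O(\|v\|_3^3+\|v\|_5^5)=o(\|v\|^2_{1,\kappa})$.

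For the co-albedo term, write $\int Q_\varepsilon[B(u+v)-B(u)-\beta(u)v-\tfrac12\beta'(u)v^2]$. Its integrand equals $v^2\int_0^1(1-t)(\beta'(u+tv)-\beta'(u))\d t$ and is bounded by $\omega(|v|)\,v^2$, where $\omega$ is a bounded modulus of continuity of $\beta'$ on a neighbourhood of the range of $u$ (for large $|v|$, use the bound $2\|\beta'\|_\infty$ directly). Splitting the integral at $\{|v|\le\delta\}$ gives at most $\omega(\delta)\|v\|_2^2+C\|v\|_4^4/\delta^2$, which is $o(\|v\|_{1,\kappa}^2)$. Hence there exists $r_i>0$ with $G_\varepsilon(u_i^\varepsilon+v)-G_\varepsilon(u_i^\varepsilon)\ge \tfrac{c_i}{4}r_i^2=:\alpha_i$ for $\|v\|_{1,\kappa}=r_i$. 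The constants $c_i$ and the moduli can be chosen uniformly in small $|\varepsilon|$, although that is not even needed for fixed $\varepsilon$.

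To make the balls disjoint, I use that $u_S^\varepsilon\to\theta_S$ and $u_W^\varepsilon\to\theta_W$ in $D(A_\kappa)$, hence in $\rH^1_\kappa$, with $\|\theta_W-\theta_S\|_{1,\kappa}=\sqrt2(\theta_W-\theta_S)>0$. For small $|\varepsilon|$ the two minimisers are therefore at distance at least $\sqrt2(\theta_W-\theta_S)/2$. Shrinking $r_S,r_W$ below a quarter of that distance keeps the lower bounds valid, since the Taylor estimate holds for all smaller radii, and makes the balls disjoint. \autoref{cor:two-local-minima-mpt} then yields $u_{MP}^\varepsilon$ with $G_\varepsilon'(u_{MP}^\varepsilon)=0$, $G_\varepsilon(u_{MP}^\varepsilon)=c_\varepsilon$, $c_\varepsilon>\max\{G_\varepsilon(u_S^\varepsilon),G_\varepsilon(u_W^\varepsilon)\}$, and consequently $u_{MP}^\varepsilon\notin\{u_S^\varepsilon,u_W^\varepsilon\}$. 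The main obstacle is the remainder estimate for the merely $\rC^1$ co-albedo term, and the splitting argument above is how I would attack it first.
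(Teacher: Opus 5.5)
Your proposal is correct and follows the paper's proof. Coercivity of $D^2G_\varepsilon$ at $u_S^\varepsilon,u_W^\varepsilon$ gives energy gaps on small disjoint spheres, and together with \autoref{prop:PS-degenerate} this lets you apply \autoref{cor:two-local-minima-mpt}. The one difference is that the paper just invokes ``$\rC^2$ regularity of $G_\varepsilon$'' for the energy gap, whereas you prove the second-order remainder estimate by hand. Your worry there is unnecessary: $\rH^1_\kappa\hookrightarrow\rL^p$ for every $p<\infty$ and $\beta'$ is bounded and continuous, so $G_\varepsilon$ is genuinely $\rC^2$ on $\rH^1_\kappa$. Your splitting estimate is nonetheless valid and makes that step fully explicit.
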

\begin{proof}
By \autoref{thm: existence three climate degenerate small eps}, the critical points
$ u_S^\varepsilon $ and $ u_W^\varepsilon $ are distinct and have coercive second
variation. Hence, by the $\rC^2$ regularity of $G_{\varepsilon}$ in
neighbourhoods of these two points, they are strict local minimisers with an energy gap on sufficiently small spheres. Therefore there exist
$r_S,r_W>0$, chosen so that
$$
B_{r_S}(u_S^\varepsilon)\cap B_{r_W}(u_W^\varepsilon)=\emptyset,
$$
and constants $\alpha_S,\alpha_W>0$ such that
$$
G_{\varepsilon}(u) \geq G_{\varepsilon}( u_S^\varepsilon ) + \alpha_S \qquad \text{for every }u \in \partial B_{r_S}(u_S^\varepsilon ),
$$
and
$$
G_{ \varepsilon}(u) \geq G_{\varepsilon}( u_W^\varepsilon) + \alpha_W \qquad
\text{for every } u \in \partial B_{r_W}(u_W^\varepsilon).
$$
Every continuous path connecting $u_S^\varepsilon$ to $u_W^\varepsilon$ must cross
both boundaries $\partial B_{r_S}(u_S^\varepsilon)$ and
$\partial B_{r_W}(u_W^\varepsilon)$. Therefore $
c_\varepsilon> \max \left\lbrace 
G_{  \varepsilon}( u_S^\varepsilon ) , G_{ \varepsilon} (u_W^\varepsilon)
\right\rbrace .$

Since $G_{ \varepsilon}$ satisfies the PS condition thanks to
\autoref{prop:PS-degenerate}, \autoref{cor:two-local-minima-mpt} applies to the two local minimisers
$u_S^\varepsilon$ and $u_W^\varepsilon$. Hence there exists a critical point
$ u_{MP}^\varepsilon \in \rH^1_\kappa(\mathcal{I})$ at the minimax level
$c_\varepsilon$. The strict inequality above shows that
$u_{MP}^\varepsilon$ is distinct from the two local minimisers.
\end{proof}

\subsection{Identification of the perturbative saddle with the mountain-pass point}

The previous perturbative argument produces a critical point
$u_M^\varepsilon$ by a local implicit-function-theorem construction near the
homogeneous middle equilibrium $u_M^0$. This critical point is non-degenerate
and has Morse index one. Independently, the Mountain-Pass Theorem applied
between the two perturbative local minimisers
$u_S^\varepsilon$ and $u_W^\varepsilon$ produces a critical point
$u_{MP}^\varepsilon$ at the minimax level $c_\varepsilon$.

These two critical points need not coincide for purely local reasons. Indeed,
the Morse-index-one property is a local spectral property, whereas the
mountain-pass characterisation is global and depends on the minimax geometry
between the two wells. A functional may have several Morse-index-one critical
points, and the mountain-pass level may in principle be realized by a different
saddle. The purpose of this subsection is to show, under the homogeneous
mountain-pass isolation condition in \autoref{ass:homogeneous-mountain-pass-isolation}, that for sufficiently small perturbations
the mountain-pass saddle is precisely the perturbative continuation of the
homogeneous middle equilibrium, that is
$$
u_{MP}^\varepsilon=u_M^\varepsilon.
$$

The identification argument is based on three auxiliary steps. First, we
prove that the perturbed minimax levels $c_\varepsilon$ converge to the
homogeneous level $c_0$. Second, we show that any sequence of perturbed
mountain-pass points is a Palais-Smale sequence for $G_0$, and therefore
converges, up to a subsequence, to a homogeneous critical point at level
$c_0$. The isolation assumption then identifies this limit with $u_M^0$.
Finally, we upgrade the resulting convergence from
$\rH^1_\kappa(\mathcal{I})$ to $D(A_\kappa)$, where the local uniqueness supplied
by the implicit function theorem can be applied.
\begin{lem}[Continuity of the mountain-pass level]
\label{lem:mp-level-continuity-degenerate}
Assume the hypotheses of \autoref{thm: existence three climate degenerate small eps}. Let $c_\varepsilon$ be the
mountain-pass level defined in
\eqref{eq: MP level degenerate}, and let $c_0$ be the homogeneous
mountain-pass level. Then
$$
c_\varepsilon\to c_0 \qquad \text{as }\varepsilon \to0.
$$
\end{lem}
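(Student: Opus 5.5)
We prove the two inequalities $\limsup_{\varepsilon\to0} c_\varepsilon\le c_0$ and $\liminf_{\varepsilon\to0} c_\varepsilon\ge c_0$ separately. The basic tool is a uniform comparison between $G_\varepsilon$ and $G_0$ on bounded subsets of $\rH^1_\kappa(\mathcal I)$. Since $G_\varepsilon(u)-G_0(u)=-\varepsilon\int_{\mathcal I}Q_1B(u)\d x$ and $|B(u)|\le |B_0|+|u|$ by \eqref{eq: inequality B(u)}, we obtain
$$
|G_\varepsilon(u)-G_0(u)|\le |\varepsilon|\,\|Q_1\|_\infty\bigl(2|B_0|+\sqrt2\,\|u\|_2\bigr).
$$
The difference is therefore $O(|\varepsilon|)$, uniformly on $\rL^2$-bounded sets.

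\emph{Upper bound.} Fix $\delta>0$ and choose $\gamma_0\in\Gamma_0$ with $\sup_t G_0(\gamma_0(t))\le c_0+\delta$. We turn $\gamma_0$ into a path in $\Gamma_\varepsilon$ by concatenating three pieces: the straight segment from $u_S^\varepsilon$ to $\theta_S$, then $\gamma_0$, then the segment from $\theta_W$ to $u_W^\varepsilon$. By \autoref{thm: existence three climate degenerate small eps}, $u_i^\varepsilon\to\theta_i$ in $D(A_\kappa)$, and hence in $\rH^1_\kappa(\mathcal I)$. Since $G_0$ is continuous, the segments lie in small balls around $\theta_S$ and $\theta_W$, where $G_0$ is close to $G_0(\theta_S)$ and $G_0(\theta_W)$. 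Both of these values are strictly below $c_0$, by the barrier argument of \autoref{cor:two-local-minima-mpt} applied to $G_0$ with the minimisers of \autoref{lem:homogeneous-equilibria-morse-degenerate}. The concatenated path is compact in $\rH^1_\kappa$, hence bounded in $\rL^2$, so the comparison estimate gives $c_\varepsilon\le c_0+\delta+O(|\varepsilon|)$. Letting $\varepsilon\to0$ and then $\delta\to0$ yields $\limsup c_\varepsilon\le c_0$.

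\emph{Lower bound.} This step is the main obstacle, because a nearly optimal path for $G_\varepsilon$ need not be uniformly bounded in $\rL^2$ a priori. We handle this with the coercivity estimate \eqref{eq: coercive energy bound degenerate}. For $|\varepsilon|\le1$ its constants $C_2,C_3$ are uniform in $\varepsilon$, since $\|Q_\varepsilon\|_\infty\le Q_*+\|Q_1\|_\infty$. Take $\gamma\in\Gamma_\varepsilon$ with $\sup_t G_\varepsilon(\gamma(t))\le c_\varepsilon+\delta\le c_0+2\delta$, which is possible for $\varepsilon$ small by the upper bound. Every point $\gamma(t)$ then satisfies $C_2\|\gamma(t)\|_5^5\le c_0+2\delta+2C_3$. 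This bounds $\|\gamma(t)\|_2$ uniformly in $t$ and in $\varepsilon$, with a bound $K$ independent of $\varepsilon$. Joining $\theta_S$ to $u_S^\varepsilon$ and $u_W^\varepsilon$ to $\theta_W$ by short segments, as in the upper bound, produces a path in $\Gamma_0$ along which every point has $\rL^2$-norm at most $K+1$. Along this path,
$$
\sup G_0\le \max\Bigl\{c_\varepsilon+\delta+C|\varepsilon|(1+K),\ \max\{G_0(\theta_S),G_0(\theta_W)\}+o(1)\Bigr\}.
$$
Since $\max\{G_0(\theta_S),G_0(\theta_W)\}<c_0$, it follows that $c_0\le c_\varepsilon+\delta+O(|\varepsilon|)$ for all small $\varepsilon$. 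Hence $\liminf c_\varepsilon\ge c_0$, which completes the proof.
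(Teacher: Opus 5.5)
Your proposal is correct and follows essentially the same route as the paper. For the upper bound, both arguments concatenate a near-optimal homogeneous path with short endpoint segments and use the uniform comparison $G_\varepsilon-G_0=-\varepsilon\int_{\mathcal I}Q_1B(u)\d x$. For the lower bound, both use the $\varepsilon$-uniform coercivity estimate to control nearly optimal $G_\varepsilon$-paths before reattaching segments. The only differences are cosmetic: you work with $\rL^2$-bounds and an explicit $O(|\varepsilon|)$ rate, whereas the paper extracts a full $\rH^1_\kappa$-bound along a sequence $\varepsilon_n\to0$.
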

\begin{proof}
Assume for simplicity that $\mathfrak{e} = 1$. We first prove the upper bound. Let $\eta>0$, and choose
$\gamma_0\in\Gamma_0$ such that
$$
\sup_{t\in[0,1]} G_{0}(\gamma_0(t)) \leq c_0+\eta.
$$
Since $u_S^\varepsilon\to u_S^0$ and
$u_W^\varepsilon\to u_W^0$ in $H^1_\kappa(\mathcal{I})$, we can concatenate
$\gamma_0$ with the short line segments joining $u_S^\varepsilon$ to $u_S^0$
and $u_W^0$ to $u_W^\varepsilon$. This gives a path
$\gamma_\varepsilon\in\Gamma_\varepsilon$. Along these short segments, the
energy converges to the energy of the corresponding endpoint. Since
$u_S^0$ and $u_W^0$ are strict local minimisers and
$c_0>\max\{G_{0}(u_S^0), G_{0}(u_W^0)\}$, the maximum on the
short segments is strictly below $c_0+\eta$ for $\abs{\varepsilon}$ small.

Moreover,
$$
G_{\varepsilon}(u)-G_{0}(u) = - \varepsilon \int_{ \mathcal{I}}Q_1(x) B(u(x))\d x.
$$
Since $B$ has at most linear growth and $\gamma_0([0,1])$ is compact in
$\rH^1_\kappa(\mathcal{I})$, the right-hand side is $o(1)$ uniformly for
$u\in\gamma_0([0,1])$. Therefore $
\limsup_{\varepsilon\to0}c_\varepsilon
\leq c_0+\eta.$ Letting $\eta\downarrow0$ gives
\begin{equation}
\label{eq: bound limsup paths}
    \limsup_{\varepsilon\to0}c_\varepsilon\leq c_0.
\end{equation}

We now prove the lower bound. Let $\varepsilon_n\to0$, and choose
$\gamma_n\in\Gamma_{\varepsilon_n}$ such that $
\sup_{t\in[0,1]} G_{\varepsilon_n}(\gamma_n(t))
\leq c_{\varepsilon_n}+\frac{1}{n}.$ The upper bound just proved implies that $(c_{\varepsilon_n})_n$ is bounded
from above. For every $t\in[0,1]$ we have
$$
G_{\varepsilon_n}( \gamma_n ( t ) ) \leq c_{\varepsilon_n}+\frac{1}{n}.
$$
The bound in \eqref{eq: bound limsup paths} implies that the right-hand side is bounded
uniformly in $n$. Moreover, since $Q_{\varepsilon}=Q_*+\varepsilon Q_1$, the quantities
$\|Q_{\varepsilon}\|_{\infty }$ are uniformly bounded for $|\varepsilon|$ small. Therefore the proof of the coercive bound
\eqref{eq: coercive energy bound degenerate} applies uniformly to $G_{ \varepsilon}$, and thus there exist constants $c,C>0$, independent of
$\varepsilon$, such that
\begin{equation}
G_{ \varepsilon}(u) \geq \frac{1}{2} \int_{\mathcal{I}} \kappa(x) ( u' ( x ) )^2\d x + c \| u \|_{ 5 }^5 - C \qquad  \forall u \in \rH^1_\kappa(\mathcal{I}),
\label{eq:uniform-coercive-energy-bound-epsilon-degenerate}
\end{equation}
for all sufficiently small \(|\varepsilon|\). Hence an upper bound on $G_{\varepsilon_n}(\gamma_n(t))$, uniform in $n$ and $t$, implies uniform bounds on
$$
\| \sqrt{\kappa} \, \gamma_n'(t) \|_{2 } \qquad \text{and} \qquad \| \gamma_n( t ) \|_{5}.
$$
Thanks to the embedding $\rL^5(\mathcal{I}) \hookrightarrow \rL^2(\mathcal{I})$, we have that there exists \(R>0\) such that
$$
\| \gamma_n ( t ) \|_{ \rH^1_\kappa(\mathcal{I}) } \leq R \qquad \text{for every } t \in[0,1] \text{ and every } n .
$$
Therefore
$$
\sup_{t\in[0,1]} \vert  G_{  \varepsilon_n }( \gamma_n(t ) ) - G_{  0 }( \gamma_n(t)) \vert  \to 0.
$$

As before, concatenate $\gamma_n$ with the short line segments joining
$u_S^0$ to $u_S^{\varepsilon_n}$ and
$u_W^{\varepsilon_n}$ to $u_W^0$. This produces a path
$\tilde\gamma_n\in\Gamma_0$. The contribution of the short endpoint segments
has maximum strictly below $c_0$ for $n$ sufficiently large, because the
endpoints converge to the strict local minimisers $u_S^0$ and $u_W^0$. Hence
$$
c_0 \leq \sup_{t\in[0,1]} G_{0}(\tilde\gamma_n(t)) \leq c_{\varepsilon_n}+o(1).
$$
Thus $ c_0 \leq \liminf_{n\to\infty}c_{\varepsilon_n}.$ Since the sequence $\varepsilon_n\to0$ was arbitrary, this proves $ \liminf_{\varepsilon\to0}c_\varepsilon\geq c_0.$ Combining the two bounds gives $c_\varepsilon\to c_0$.
\end{proof}
\noindent
We next show that perturbed mountain-pass points form a Palais-Smale sequence
for the homogeneous functional.

\begin{lem}[Perturbative mountain-pass points are homogeneous Palais-Smale sequences]
\label{lem:mp-points-homogeneous-ps-degenerate}
Let $\varepsilon_n\to0$, and let
$u_n:=u_{MP}^{\varepsilon_n}$ be mountain-pass critical points of
$G_{\varepsilon_n}$ at level $c_{\varepsilon_n}$. Then
$(u_n)_n$ is a Palais-Smale sequence for the homogeneous functional
$ G_{0}$ at level $c_0$. In particular, up to a subsequence, it holds
$$
u_n \to u_* \qquad \text{in } \rH^1_\kappa( \mathcal{I}),
$$
where $u_*$ is a critical point of $ G_{0}$ and $
G_{0}(u_*) = c_0.$
\end{lem}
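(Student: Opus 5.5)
We need to show that $(u_n)_n$ satisfies $G_0(u_n)\to c_0$ and $G_0'(u_n)\to 0$ in $(\rH^1_\kappa)^*$. The Palais-Smale condition from \autoref{prop:PS-degenerate}, applied with $\varepsilon=0$, then gives a convergent subsequence, and continuity of $G_0$ and $G_0'$ identifies the limit.

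\emph{Step 1: uniform bound.} Since $G_{\varepsilon_n}(u_n)=c_{\varepsilon_n}$ and $c_{\varepsilon_n}\to c_0$ by \autoref{lem:mp-level-continuity-degenerate}, the energies are uniformly bounded. The uniform coercive bound \eqref{eq:uniform-coercive-energy-bound-epsilon-degenerate} then bounds $\|\sqrt\kappa\,u_n'\|_2$ and $\|u_n\|_5$ uniformly in $n$. Combined with $\rL^5(\mathcal I)\hookrightarrow\rL^2(\mathcal I)$, this gives $\|u_n\|_{1,\kappa}\le R$ for some $R$ independent of $n$.

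\emph{Step 2: comparing $G_0$ with $G_{\varepsilon_n}$.} The two functionals differ only through the insolation term:
$$
G_{\varepsilon_n}(u)-G_0(u)=-\varepsilon_n\int_{\mathcal I}Q_1B(u)\d x,\qquad \langle G_{\varepsilon_n}'(u)-G_0'(u),v\rangle=-\varepsilon_n\int_{\mathcal I}Q_1\beta(u)v\d x.
$$
By \eqref{eq: inequality B(u)}, $|B(u_n)|\le |B_0|+|u_n|$, so the first difference is bounded by $|\varepsilon_n|\,\|Q_1\|_\infty(2|B_0|+\sqrt2 R)\to0$. Since $0\le\beta\le1$, the second is bounded by $|\varepsilon_n|\,\|Q_1\|_\infty\|v\|_1\le C|\varepsilon_n|\,\|v\|_{1,\kappa}$. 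Hence $\|G_0'(u_n)\|_{(\rH^1_\kappa)^*}\le C|\varepsilon_n|\to0$, because $G_{\varepsilon_n}'(u_n)=0$. We also get $G_0(u_n)=c_{\varepsilon_n}+o(1)\to c_0$. So $(u_n)_n$ is a Palais-Smale sequence for $G_0$ at level $c_0$.

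\emph{Step 3: passing to the limit.} \autoref{prop:PS-degenerate} with $Q_0=Q_*\in\rL^\infty$ yields a subsequence with $u_n\to u_*$ in $\rH^1_\kappa(\mathcal I)$. Since $G_0\in\rC^1(\rH^1_\kappa)$, we get $G_0(u_*)=c_0$ and $G_0'(u_*)=0$. For continuity of $G_0'$ in $u$, the integrand $|u|^3u$ is handled via the $\rL^5$ embedding and $\beta(u)$ via dominated convergence.

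The only delicate point is the uniformity in Step 1, which needs the a priori energy bound to be independent of $n$. This is exactly what \autoref{lem:mp-level-continuity-degenerate} supplies, since it makes $c_{\varepsilon_n}$ bounded. Everything else is a direct estimate.
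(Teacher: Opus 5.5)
Your proposal is correct and follows the paper's proof. The steps match: a uniform $\rH^1_\kappa$-bound from $c_{\varepsilon_n}\to c_0$ and the uniform coercive estimate; then the $O(|\varepsilon_n|)$ differences $G_{\varepsilon_n}-G_0$ and $G_{\varepsilon_n}'-G_0'$ to obtain a Palais-Smale sequence for $G_0$ at level $c_0$; and finally the Palais-Smale condition for $G_0$ together with continuity of $G_0$ and $G_0'$. Your version only differs in making the constants in the comparison estimates explicit.
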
 
\begin{proof}
By \autoref{lem:mp-level-continuity-degenerate}, it holds $
c_{\varepsilon_n}\to c_0. $ Since $
G_{ \varepsilon_n}(u_n)=c_{\varepsilon_n},$ the energy levels of $u_n$ are bounded. The same argument applied in the proof of \autoref{lem:mp-level-continuity-degenerate} after the coercive estimate \eqref{eq:uniform-coercive-energy-bound-epsilon-degenerate}, gives that $(u_n)_n$ is bounded in
$\rH^1_\kappa(\mathcal{I})$. Moreover,
$$
G_{0}(u_n) - G_{\varepsilon_n}(u_n) = \varepsilon_n\int_{\mathcal{I} }Q_1(x)B(u_n(x))\d x.
$$
Since $B$ has at most linear growth and $(u_n)_n$ is bounded in
$\rH^1_\kappa(\mathcal{I})$, the right-hand side tends to zero. Hence $
 G_{0}(u_n)\to c_0.$

It remains to check the derivative. Since $u_n$ is a critical point of
$ G_{\varepsilon_n}$, then $
G_{\varepsilon_n}'(u_n)=0.$ For every $v\in \rH^1_\kappa(\mathcal{I})$,
$$
\left\langle G_{ 0 }'( u_n )-G_{\varepsilon_n}'(u_n), v \right\rangle = \varepsilon_n \int_{\mathcal{I}} Q_1 ( x ) \beta( u_n (x) ) v( x )\d x.
$$
Since $\beta$ is bounded and $Q_1\in \rL^\infty(\mathcal{I})$, we have
$$
\vert  \left \langle G_{0}'(u_n), v \right \rangle \vert  \leq C\abs{ \varepsilon_n } \| v \|_{2} \leq C \abs{ \varepsilon_n} \| v \|_{ \rH^1_\kappa(\mathcal{I})}.
$$
Thus $ G_{0}'(u_n) \to 0 $ in $(\rH^1_\kappa)^*.$ Therefore $(u_n)_n$ is a Palais-Smale sequence for $ G_{0}$ at level
$c_0$. Since $G_{0}$ satisfy the PS-condition thanks to \autoref{prop:PS-degenerate}, after passing to a
subsequence, we have
$$
u_n\to u_* \qquad  \text{strongly in } \rH^1_\kappa(\mathcal{I}).
$$
Since $ G_{0} \in \rC^1(\rH^1_\kappa; \mathbb{R})$ and $u_k \to u_*$ in $\rH^1_\kappa$, we have $ G_{0}' (u_n) \to  0  =G_{0}'(u_*) $ and $G_{0}(u_n) \to c_0 = G_{0}(u_*) $.
\end{proof}
\noindent
To use the local uniqueness given by the implicit function theorem, we upgrade
the convergence from $\rH^1_\kappa(\mathcal{I})$ to $D(A_\kappa)$.

\begin{lem}[Upgrade of critical-point convergence]
\label{lem:critical-convergence-upgrade-degenerate}
Let $ \varepsilon_n \to0$, and let $ u_n $ be critical points of
$G_{ \varepsilon_n}$. Assume that
$$
u_n \to u_M^0 \qquad \text{strongly in } \rH^1_\kappa ( \mathcal{I} ).
$$
Then
$$
u_n\to u_M^0 \qquad \text{strongly in } D(A_\kappa).
$$
\end{lem}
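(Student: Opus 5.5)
The plan is to use the equation satisfied by $u_n$ to express $A_\kappa u_n$ through lower-order terms, and then to bootstrap with the $\rL^\infty$ bound from \autoref{lem:domain-embeds-Linfty-degenerate}. Throughout, set $\mathfrak e=1$.

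Since $u_n$ is a critical point of $G_{\varepsilon_n}$, the weak Euler--Lagrange identity reads
$$
a_\kappa(u_n,v)=\int_{\mathcal I} f_n v\d x \qquad \text{for all } v\in \rH^1_\kappa(\mathcal I),
$$
with $f_n:=-u_n|u_n|^3+Q_{\varepsilon_n}\beta(u_n)+q$. If $f_n\in \rL^2(\mathcal I)$, the definition of the operator associated with the form gives $u_n\in D(A_\kappa)$ and $A_\kappa u_n=f_n$. Moreover $A_\kappa u_M^0=0=f_0:=-\theta_M^4+Q_*\beta(\theta_M)+q$. Hence it suffices to prove $f_n\to f_0$ in $\rL^2(\mathcal I)$, because $u_n\to u_M^0$ in $\rL^2$ is already known and the graph norm controls the $D(A_\kappa)$-norm.

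The first step is to check that $f_n\in\rL^2$ and that it converges there. By \autoref{lemma: compact embedding weighted sobolev}, $\rH^1_\kappa\hookrightarrow \rL^p$ continuously for every finite $p$, so $u_n\to u_M^0$ in $\rL^8$. The map $s\mapsto s|s|^3$ satisfies $|a|a|^3-b|b|^3|\le 4(|a|^3+|b|^3)|a-b|$. By Hölder's inequality, with $\rL^8$ for the cubic factor and $\rL^4$ for the difference, $u_n|u_n|^3\to\theta_M^4$ in $\rL^2$.

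For the co-albedo term, $\beta$ is globally Lipschitz because $\beta\in\rC^1_b$. Therefore
$$
\|Q_{\varepsilon_n}\beta(u_n)-Q_*\beta(\theta_M)\|_2\le \|Q_{\varepsilon_n}\|_\infty\|\beta'\|_\infty\|u_n-\theta_M\|_2+|\varepsilon_n|\,\|Q_1\|_\infty\|\beta\|_\infty\sqrt2 \to 0.
$$
Thus $f_n\to f_0$ in $\rL^2$, which gives $A_\kappa u_n\to A_\kappa u_M^0$ in $\rL^2$, and so $u_n\to u_M^0$ in $D(A_\kappa)$.

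The main obstacle is the regularity step, namely justifying that a weak $\rH^1_\kappa$ critical point with $\rL^2$ right-hand side lies in $D(A_\kappa)$. This is exactly the characterisation of the form operator: $u\in D(A_\kappa)$ iff $u\in D(a_\kappa)$ and $v\mapsto a_\kappa(u,v)$ is $\rL^2$-continuous. That characterisation is available here because the critical-point identity holds for all test functions in the form domain $\rH^1_\kappa(\mathcal I)$, not only for $\rC_c^\infty$ functions. The quartic term is handled by the $\rL^p$ embeddings for all finite $p$, so no $\rL^\infty$ bound on $u_n$ is needed a priori. If one prefers, \autoref{lem:domain-embeds-Linfty-degenerate} then yields $u_n\to\theta_M$ uniformly as a consequence.
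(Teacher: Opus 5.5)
Your proof is correct, but it handles the quartic term by a different route from the paper.

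\textbf{The paper's route.} The paper first proves a uniform bound $\|u_n\|_\infty\le M$. It does this by testing the Euler--Lagrange identity with $(u_n-M)^+$ and with $-(u_n+M)^-$, where $M^4>\sup_n\|Q_{\varepsilon_n}\|_\infty+|q|$. With this bound, $f_n\in\rL^2$ is immediate. The convergence $u_n|u_n|^3\to\theta_M^4$ in $\rL^2$ then follows from $\rL^2$ convergence alone, because $s\mapsto s|s|^3$ is Lipschitz on $[-M,M]$.

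\textbf{Your route.} You skip the truncation entirely. You invoke the continuous embedding $\rH^1_\kappa(\mathcal I)\hookrightarrow\rL^p(\mathcal I)$ for every finite $p$ from \autoref{lemma: compact embedding weighted sobolev}. This turns both $f_n\in\rL^2$ and the $\rL^2$ convergence of the quartic term into a direct H\"older estimate. The identification $A_\kappa u_n=f_n$ via the form-domain characterisation, and your treatment of the co-albedo term, are the same as in the paper.

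\textbf{Comparison.} Your argument is shorter and exploits the full strength of the hypothesis, namely strong $\rH^1_\kappa$ convergence. The paper's argument uses only the sign structure of the nonlinearity and $\rL^2$ convergence. It would therefore survive nonlinearities of arbitrary polynomial growth. It also does not depend on the weight-specific, Trudinger-type $\rL^p$ embeddings. As a bonus, it gives a uniform $\rL^\infty$ bound on the critical points directly.

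\textbf{A small slip.} The exponents you quote, $\rL^8$ for the cubic factor and $\rL^4$ for the difference, do not satisfy H\"older's relation for an $\rL^2$ product under either reading. A consistent choice is
\[
\bigl\|\,|u_n|^3\,|u_n-\theta_M|\,\bigr\|_2\le\|u_n\|_8^3\,\|u_n-\theta_M\|_8 .
\]
Here the cubic factor lies in $\rL^{8/3}$ and the difference in $\rL^8$, which your embedding supplies.
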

\begin{proof}
Assume for simplicity that $\mathfrak{e}=1$. We first prove a uniform $\rL^\infty$ bound. Since $u_n$ is a critical point of
$G_{  \varepsilon_n}$, it satisfies
$$
\int_{\mathcal{I}}\kappa u_n'v'\d x + \int_{\mathcal{I}} ( u_n \abs{u_n}^3 - Q_{ \varepsilon_n}  \beta(u_n)
- q )v \d x = 0
$$
for every $v\in H^1_\kappa(\mathcal{I})$. Since
$Q_{ \varepsilon_n } = Q_* + \varepsilon_n Q_1$ is uniformly bounded in
$\rL^\infty ( -\mathcal{I})$ and $0 \leq \beta \leq 1$, choose $M>0$ so large that
$$
M^4> \sup_n \| Q_{ \varepsilon_n }\|_{ \infty  } + \abs{q}.
$$

Testing the equation with $v = (u_n-M)^+$ leads to
$$
\int_{\{u_n>M\}}\kappa |u_n'|^2\d x + \int_{\{u_n>M\}} ( u_n^4 -Q_{\varepsilon_n} \beta( u_n) - q )( u_n - M)\d x
=0.
$$
By the choice of $M$, the second integrand is non-negative on
$\{u_n > M \}$. Hence both terms vanish, and therefore
$(u_n - M )^+=0$. Thus $u_n\leq M$ a.e.

Similarly, testing the equation with $v = - (u_n+M)^-$ gives
$$
\int_{ \{ u_n < -M\} } \kappa  |u_n' |^2 \d x + \int_{ \{ u_n < - M \}}
( u_n\abs{u_n}^3 - Q_{\varepsilon_n} \beta(u_n)-q
)(u_n+M)\d x =0.
$$
On $\{ u_n < -M\}$, we have $u_n+M<0$ and
$$
u_n \abs{ u_n }^3 - Q_{ \varepsilon_n } \beta ( u_n ) - q \leq -M^4+ \sup_n \| Q_{ \varepsilon_n} \|_{\infty} + \abs{q}
<0.
$$
Therefore the second integrand is non-negative. Hence both terms are zero, and
$(u_n+M)^-=0$. Thus $u_n \geq -M$ a.e. We thus have proved that
$$
\| u_n \|_{ \infty }\leq M \qquad \text{uniformly in } n.
$$
In particular, the right-hand side
$$
-u_n\abs{u_n}^3+Q_{\varepsilon_n} \beta(u_n) +q
$$
belongs to $L^2(\mathcal{I})$, uniformly in $n$. Hence, by the definition of the
operator associated with the form, $ u_n \in D(A_\kappa)$ and
$$
A_\kappa u_n = -u_n \abs{u_n}^3 + Q_{\varepsilon_n} \beta(u_n) + q
$$
in $\rL^2(\mathcal{I})$.

Since $u_n \to u_M^0$ in $H^1_\kappa(\mathcal{I})$, in particular
$u_n \to u_M^0$ in $\rL^2(\mathcal{I})$. Together with the uniform $\rL^\infty$ bound, this implies $
u_n \abs{u_n}^3
\to u_M^0\abs{u_M^0}^3$ in $\rL^2(\mathcal{I}).$ Moreover, since $Q_{\varepsilon_n}\to Q_*$ in $\rL^\infty(\mathcal{I})$ and
$\beta$ is Lipschitz on bounded intervals, we also have
$$
Q_{ \varepsilon_n} \beta(u_n) \to Q_* \beta(u_M^0) \qquad \text{in }\rL^2(\mathcal{I}).
$$

The critical point equation may be written as
$$
A_\kappa u_n = -u_n\abs{u_n}^3 + Q_{\varepsilon_n} {\beta}(u_n) + q.
$$
Similarly,
$$
A_\kappa u_M^0 = -u_M^0\abs{u_M^0}^3 + Q_*\beta (u_M^0) + q.
$$
Therefore $
A_\kappa u_n \to A_\kappa u_M^0$ in $L^2 (\mathcal{I}).
$ Since also $u_n\to u_M^0$ in $\rL^2(\mathcal{I})$, we conclude that 
$$
u_n\to u_M^0 \qquad \text{in }D(A_\kappa),
$$
with respect to the graph norm of $A_\kappa$.
\end{proof}
\noindent
The preceding lemmas now allow us to identify the mountain-pass point with the
perturbative middle branch.

\begin{prop}[Identification of the perturbative saddle with the mountain-pass saddle]
\label{prop:identification-mp-morse-degenerate}
Assume the hypotheses of \autoref{thm: existence three climate degenerate small eps} and \autoref{ass:homogeneous-mountain-pass-isolation}. Then, for every
sufficiently small $\abs{\varepsilon}$, every mountain-pass critical point
$u_{MP}^\varepsilon$ produced by \autoref{cor:mountain-pass-between-branches-degenerate} coincides with the
perturbative Morse-index-one equilibrium $u_M^\varepsilon$ from \autoref{thm: existence three climate degenerate small eps}, i.e. $
u_{MP}^\varepsilon=u_M^\varepsilon.$
\end{prop}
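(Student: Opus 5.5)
We argue by contradiction and combine the three auxiliary lemmas of this subsection with the local uniqueness coming from the implicit function theorem in \autoref{thm: existence three climate degenerate small eps}.

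Suppose the claim fails. Then there are a sequence $\varepsilon_n\to0$ and, for each $n$, a mountain-pass critical point $u_n:=u_{MP}^{\varepsilon_n}$ of $G_{\varepsilon_n}$ at level $c_{\varepsilon_n}$ with $u_n\neq u_M^{\varepsilon_n}$.

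First, by \autoref{lem:mp-points-homogeneous-ps-degenerate}, after passing to a subsequence we have $u_n\to u_*$ in $\rH^1_\kappa(\mathcal I)$. Here $u_*$ is a critical point of $G_0$ with $G_0(u_*)=c_0$; this step uses \autoref{lem:mp-level-continuity-degenerate} to obtain $c_{\varepsilon_n}\to c_0$. The isolation hypothesis \eqref{eq:main-results-unified-level-isolation} in \autoref{ass:homogeneous-mountain-pass-isolation} then forces $u_*=u_M^0$.

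Second, \autoref{lem:critical-convergence-upgrade-degenerate} upgrades this to convergence in the graph norm: $u_n\to u_M^0$ in $D(A_\kappa)$. Since the $u_n$ are critical points with $\|u_n\|_\infty$ uniformly bounded, they lie in $D(A_\kappa)$ and satisfy $\mathcal F(\varepsilon_n,u_n)=0$, where $\mathcal F$ is the map used in the proof of \autoref{thm: existence three climate degenerate small eps}.

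Third, we invoke the local uniqueness part of the implicit function theorem. Because $D_u\mathcal F(0,\theta_M)=A_\kappa+\mu_M$ is an isomorphism from $D(A_\kappa)$ onto $\rL^2(\mathcal I)$, there exist $\delta,\rho>0$ such that, for $|\varepsilon|<\delta$, the unique zero of $\mathcal F(\varepsilon,\cdot)$ in the ball $\{u:\|u-\theta_M\|_{D(A_\kappa)}<\rho\}$ is $u_M^\varepsilon$. For $n$ large, both $|\varepsilon_n|<\delta$ and $\|u_n-\theta_M\|_{D(A_\kappa)}<\rho$ hold, so $u_n=u_M^{\varepsilon_n}$. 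This contradicts the choice of $u_n$.

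Hence there is $\varepsilon_0>0$ such that, for $|\varepsilon|<\varepsilon_0$, every mountain-pass critical point at level $c_\varepsilon$ equals $u_M^\varepsilon$. The arbitrariness of the sequence is what gives the statement for \emph{every} such point.

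The main obstacle is making the local uniqueness statement precise. The implicit function theorem as used in \autoref{thm: existence three climate degenerate small eps} was applied only to produce a branch, so uniqueness in a fixed neighbourhood, uniform for small $\varepsilon$, must be made explicit. I would obtain it through the standard contraction formulation. Write
\[
\mathcal F(\varepsilon,u)=0 \iff u=u-(A_\kappa+\mu_M)^{-1}\mathcal F(\varepsilon,u).
\]
The map on the right is a contraction on a small $D(A_\kappa)$-ball around $\theta_M$, uniformly in small $\varepsilon$. This holds because $\mathcal F$ is $\rC^1$ there: the Nemytskii operators are $\rC^1$ on $\rL^\infty$-bounded sets via \autoref{lem:domain-embeds-Linfty-degenerate}. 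A secondary point is that all critical points of $G_{\varepsilon_n}$ lie in $D(A_\kappa)$, which the $\rL^\infty$ bound in \autoref{lem:critical-convergence-upgrade-degenerate} supplies.
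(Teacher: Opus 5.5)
Your proposal is correct and follows the paper's proof step for step. You take a contradiction sequence $\varepsilon_n\to0$, then use \autoref{lem:mp-points-homogeneous-ps-degenerate} together with the isolation hypothesis to get $u_n\to u_M^0$ in $\rH^1_\kappa(\mathcal I)$. You upgrade this to $D(A_\kappa)$ via \autoref{lem:critical-convergence-upgrade-degenerate} and conclude by local uniqueness from the implicit function theorem. Your explicit contraction argument is valid but optional, since uniqueness of zeros in a fixed product neighbourhood is already part of the standard implicit function theorem that the paper invokes.
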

\begin{proof}
We argue by contradiction. Suppose that there exist $\varepsilon_n \to 0$ and
mountain-pass critical points $u_{MP}^{\varepsilon_n}$ at the minimax levels
$c_{\varepsilon_n}$ such that
$u_{MP}^{\varepsilon_n}\neq u_M^{\varepsilon_n}$
for every $n$. Set $ u_n:=u_{MP}^{\varepsilon_n}.$ By \autoref{lem:mp-points-homogeneous-ps-degenerate}, after passing to a
subsequence,
$$
u_n \to u_* \qquad \text{strongly in }\rH^1_\kappa(\mathcal{I}),
$$
where $
G_{ 0 }'( u_ *) = 0$ and $  G_{0}(u_*) = c_0.$ By \autoref{ass:homogeneous-mountain-pass-isolation}, the
only critical point of the homogeneous functional $G_{0}$ at level
$c_0$ is $u_M^0$. Therefore $
u_*=u_M^0.$ Hence $
u_n\to u_M^0$  in $\rH^1_\kappa(\mathcal{I})$. Since each $u_n$ is a critical point of $ G_{\varepsilon_n}$,
\autoref{lem:critical-convergence-upgrade-degenerate} gives the stronger
convergence $u_n\to u_M^0$ in $D(A_\kappa).$

On the other hand, by the implicit-function-theorem construction in the proof
of \autoref{thm: existence three climate degenerate small eps}, there exist
a neighbourhood $\mathcal U$ of $u_M^0$ in $D(A_\kappa)$ and a number
$ \varepsilon_1>0$ such that, for every $|\varepsilon|< \varepsilon_1$, the
only critical point of $ G_{\varepsilon}$ in $\mathcal{U}$ is
$ u_M^\varepsilon$. Since $u_n\to u_M^0$ in $D( A_\kappa)$, we have
$u_n \in \mathcal{U}$ for all sufficiently large $n$. Therefore
$ u_{ MP }^{\varepsilon_n} = u_M^{\varepsilon_n} $
for all sufficiently large $n$, contradicting the choice of the sequence.
\end{proof}

\begin{rem}
\autoref{ass:homogeneous-mountain-pass-isolation} is a
global assumption. The Morse-index-one property of $u_M^0$ is local and does
not, by itself, imply that $u_M^0$ is the unique critical point at the
homogeneous mountain-pass level. Thus the identification
$u_{MP}^\varepsilon=u_M^\varepsilon$ requires the homogeneous
mountain-pass isolation input stated above.
\end{rem}

\begin{cor}[Perturbative tri-stability in the degenerate case]
\label{cor: corllary 1D tri-stability}
Under the assumptions of \autoref{thm: existence three climate degenerate small eps}, for every sufficiently small
$\abs{\varepsilon}$ the functional $ G_{\varepsilon}$ has at least
three distinct critical points in $H^1_\kappa(-1,1)$. More precisely, it has
two strict local minimisers $ u_S^\varepsilon, u_W^\varepsilon, $ corresponding to stable cold and warm climates, and one non-degenerate Morse-index-one critical point $
u_M^\varepsilon,$ corresponding to an unstable transition climate. In addition, by \autoref{cor:mountain-pass-between-branches-degenerate}, there exists a
mountain-pass critical point $ u_{MP}^\varepsilon$ between $u_S^\varepsilon$ and $u_W^\varepsilon$. If \autoref{ass:homogeneous-mountain-pass-isolation} also holds, then, for
all sufficiently small $\abs{\varepsilon}$, $
u_{MP}^\varepsilon=u_M^\varepsilon.$
\end{cor}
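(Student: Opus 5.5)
This corollary is a collection of results already established, so the plan is to assemble them rather than prove anything new.

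\emph{Existence and Morse type of the three branches.} Fix $\varepsilon_0>0$ as in \autoref{thm: existence three climate degenerate small eps}. For $|\varepsilon|<\varepsilon_0$ that theorem gives three critical points $u_S^\varepsilon,u_M^\varepsilon,u_W^\varepsilon\in D(A_\kappa)\subset \rH^1_\kappa(\mathcal I)$. It also shows that $u_S^\varepsilon$ and $u_W^\varepsilon$ are strict local minimisers, while $u_M^\varepsilon$ is non-degenerate with Morse index one.

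\emph{Distinctness.} The implicit function theorem yields $u_i^\varepsilon\to\theta_i$ in $D(A_\kappa)$, hence in $\rL^\infty(\mathcal I)$ by \autoref{lem:domain-embeds-Linfty-degenerate}. The three constants satisfy $\theta_S<\theta_M<\theta_W$. After shrinking $\varepsilon_0$ so that $\|u_i^\varepsilon-\theta_i\|_\infty<\tfrac12\min\{\theta_M-\theta_S,\theta_W-\theta_M\}$, the three functions have pairwise disjoint ranges and are therefore distinct. An alternative argument is that the Morse indices differ: $u_M^\varepsilon$ has index one and the other two have index zero. This already separates $u_M^\varepsilon$ from the minimisers, and the $\rL^\infty$ argument separates $u_S^\varepsilon$ from $u_W^\varepsilon$.

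\emph{Mountain-pass point.} \autoref{cor:mountain-pass-between-branches-degenerate} supplies a critical point $u_{MP}^\varepsilon$ at level $c_\varepsilon>\max\{G_\varepsilon(u_S^\varepsilon),G_\varepsilon(u_W^\varepsilon)\}$. This relies on the Palais--Smale property from \autoref{prop:PS-degenerate}.

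\emph{Identification under the isolation assumption.} If \autoref{ass:homogeneous-mountain-pass-isolation} also holds, \autoref{prop:identification-mp-morse-degenerate} gives $u_{MP}^\varepsilon=u_M^\varepsilon$ after possibly shrinking $\varepsilon_0$ once more.

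The only point needing care is distinctness, and it is handled by the uniform convergence argument above.
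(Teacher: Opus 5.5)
Your proposal is correct and matches the paper. The paper gives no separate proof and treats this corollary as a direct assembly of the persistence theorem, the mountain-pass corollary and the identification proposition, exactly as you do. Your distinctness argument via $u_i^\varepsilon\to\theta_i$ in $D(A_\kappa)\hookrightarrow \rL^\infty(\mathcal I)$ only makes explicit what the paper leaves implicit; even convergence in $\rL^2(\mathcal I)$ to the three distinct constants would already suffice.
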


\section{The spherical model}
\label{sec: spherical model}
\noindent 
Throughout this section, the geometry is fixed to $\mathcal M=\S^2$. To simplify the notation, we suppress the
superscript $\S^2$ and write just $G_\varepsilon,
\Gamma_\varepsilon, c_\varepsilon, a_\kappa, A_\kappa,\lambda_1^\kappa.$
\subsection{Geometric and variational framework on $\mathbb S^2$}
\noindent 
We start by recalling the Poincar\'e inequality and the Rellich-Kondrachov theorem on
the compact manifold $\mathbb S^2$, see \cite[Corollary 4.3]{Aubin1998}  and
\cite[Theorem 2.34]{Aubin1998} respectively.

\begin{lem}[Poincar\'e inequality in $\mathbb{S}^2$]
    There exists $C>0$ such that for all $u \in \rH^1 (\mathbb{S}^2)$
    $$
    \| u - \bar u \|_{2}^2 \leq C \| \nabla_{\mathbb{S}^2} u \|_{2}^2, \quad \bar u := \frac{1}{\abs{\mathbb{S}^2}} \int_{\mathbb{S}^2} u \d \omega .
    $$
\end{lem}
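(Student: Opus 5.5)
The plan is to follow the compactness-by-contradiction scheme used for the weighted inequality in \autoref{lem: weighted poincare}, now with the Rellich--Kondrachov theorem on $\S^2$ as the compactness input. Equivalently one may cite \cite[Corollary 4.3]{Aubin1998} directly, but the self-contained argument is short.

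First, reduce to mean-zero functions. Set $w:=u-\bar u$. Then $\int_{\S^2} w \d\omega=0$, and since constants have vanishing tangential gradient, $\nabla_{\S^2}w=\nabla_{\S^2}u$. It therefore suffices to prove that there is $C>0$ with $\|w\|_2^2\le C\|\nabla_{\S^2}w\|_2^2$ for all $w$ in the closed subspace $V:=\{w\in\rH^1(\S^2):\int_{\S^2}w\d\omega=0\}$. This subspace is closed because $w\mapsto\int_{\S^2} w\d\omega$ is bounded on $\rL^2(\S^2)$, the sphere having finite measure.

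Suppose the inequality fails. Then there are $w_k\in V$ with $\|w_k\|_2=1$ and $\|\nabla_{\S^2}w_k\|_2^2<1/k$. These are bounded in $\rH^1(\S^2)$, so after passing to a subsequence $w_k\rightharpoonup w$ in $\rH^1(\S^2)$, and by Rellich--Kondrachov $w_k\to w$ in $\rL^2(\S^2)$. Consequently $\|w\|_2=1$ and $\int_{\S^2}w\d\omega=0$. By weak lower semicontinuity of the norm, $\|\nabla_{\S^2}w\|_2\le\liminf_k\|\nabla_{\S^2}w_k\|_2=0$, so $\nabla_{\S^2}w=0$.

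The one step needing care is concluding from $\nabla_{\S^2}w=0$ that $w$ is constant. Here I would use the connectedness of $\S^2$. In local charts, a vanishing weak gradient of an $\rH^1$ function forces it to be locally constant: mollify in the chart to see this. A locally constant function is constant on each connected component, and $\S^2$ is connected, so $w\equiv c$. The mean-zero condition then gives $c=0$, contradicting $\|w\|_2=1$. This proves the inequality. As a byproduct, $C^{-1}$ is bounded below by the first positive eigenvalue of $-\Delta_{\S^2}$, namely $2$, so one may take $C=1/2$.
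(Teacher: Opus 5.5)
Your proof is correct, but the paper does not argue this way: it gives no proof of the lemma and simply invokes \cite[Corollary 4.3]{Aubin1998}. You instead carry the contradiction scheme of \autoref{lem: weighted poincare} over to $\mathbb{S}^2$. Rellich--Kondrachov replaces the compact embedding $\rH^1_\kappa(\mathcal{I})\hookrightarrow\rL^2(\mathcal{I})$, and connectedness of $\mathbb{S}^2$ (via chart-wise mollification) replaces the one-dimensional step $v'=0\Rightarrow v$ constant. The individual steps are sound, including the weak lower semicontinuity of $\|\nabla_{\mathbb{S}^2}\cdot\|_2$ and passing the mean-zero condition to the limit via strong $\rL^2$ convergence. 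Your route gives a self-contained proof that uses only tools the paper already states, and it works verbatim on any compact connected Riemannian manifold.

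What it does not give is any value of $C$. The argument is non-constructive, so your closing claim that $C=1/2$ arises ``as a byproduct'' is not supported by what you wrote. The value $C=1/2$ is correct, but it needs a separate argument. Expand $u-\bar u$ in spherical harmonics: on the degree-$\ell$ components ($\ell\ge1$), $-\Delta_{\mathbb{S}^2}$ acts as multiplication by $\ell(\ell+1)\ge2$, which gives $\|\nabla_{\mathbb{S}^2}u\|_2^2\ge 2\|u-\bar u\|_2^2$ directly. That spectral route is itself a complete and sharp proof on the round sphere. Together with $\kappa\ge k_*$ it also gives $\lambda_1^{\kappa,\mathbb{S}^2}\ge 2k_*$. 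Then $\mu_M>-2k_*$ becomes a checkable sufficient condition for the spectral hypothesis in \autoref{ass:homogeneous-three-well}, which the purely qualitative compactness proof cannot provide.
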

\begin{lem}[Rellich-Kondrachov embeddings in $\mathbb{S}^2$]
    For any $p \in [1, \infty)$, the embedding $\rH^1 (\mathbb{S}^2) \hookrightarrow \rL^p ( \mathbb{S}^2)$ is compact.
\end{lem}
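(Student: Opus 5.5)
The plan is to reduce everything to two ingredients: a compact embedding $\rH^1(\S^2)\hookrightarrow \rL^2(\S^2)$, and a continuous embedding $\rH^1(\S^2)\hookrightarrow \rL^r(\S^2)$ for every finite $r$. An interpolation inequality then upgrades compactness to every $p\in[1,\infty)$.

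\emph{Localisation.} First I will fix a finite atlas of $\S^2$, for instance the six hemispherical graph charts $\omega\mapsto(\omega_j,\omega_k)$ over the open unit disc. Each chart map is a smooth diffeomorphism onto a relatively compact region with smooth inverse up to the boundary. I choose a subordinate smooth partition of unity $(\chi_j)_{j=1}^6$. For $u\in \rC^\infty(\S^2)$, each $\chi_j u$, read in its chart, is a compactly supported function $v_j$ on a disc $D\subset\R^2$. In that chart the metric $g$ and the density $\sin\theta$ (or its graph analogue) are bounded above and below. Hence
$$\|v_j\|_{\rH^1(\R^2)}\le C\|u\|_{\rH^1(\S^2)},\qquad \|\chi_j u\|_{\rL^p(\S^2)}\le C\|v_j\|_{\rL^p(\R^2)},$$
with $C$ independent of $u$. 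By density these bounds extend to all of $\rH^1(\S^2)$, since $\rH^1(\S^2)$ is defined as a completion.

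\emph{Continuous embedding.} In $\R^2$ I will use the Gagliardo--Nirenberg (Ladyzhenskaya-type) inequality
$$\|v\|_{\rL^r(\R^2)}\le C_r\|v\|_{\rL^2}^{2/r}\|\nabla v\|_{\rL^2}^{1-2/r},\qquad 2\le r<\infty.$$
Summing over the six charts gives $\rH^1(\S^2)\hookrightarrow\rL^r(\S^2)$ continuously for every $r\in[2,\infty)$. Because $|\S^2|<\infty$, Hölder's inequality gives the same for $r\in[1,2)$. This is the step I regard as the main obstacle: dimension two is the borderline case, so $\rL^\infty$ is genuinely excluded and the constant $C_r$ blows up as $r\to\infty$. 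I must therefore keep $r$ finite and rely on the two-dimensional Gagliardo--Nirenberg inequality rather than Morrey's theorem.

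\emph{Compactness into $\rL^2$.} For each fixed $j$, the functions $v_j$ are bounded in $\rH^1_0(D)$ when $u$ ranges over a bounded subset of $\rH^1(\S^2)$. The Euclidean Rellich theorem on the bounded disc $D$ then yields a subsequence converging in $\rL^2(D)$. Extracting diagonally over the finitely many $j$ and pulling back through the charts, I obtain convergence of $u_n=\sum_j\chi_j u_n$ in $\rL^2(\S^2)$.

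An alternative route to this step, which I would note, avoids charts entirely. The spherical harmonics form an orthonormal basis of $\rL^2(\S^2)$, with $-\Delta_{\S^2}$-eigenvalues $\ell(\ell+1)\to\infty$. Projecting onto degrees $\ell>L$ gives the uniform tail bound $\|P_{>L}u\|_2^2\le \|\nabla_{\S^2}u\|_2^2/(L(L+1))$. Together with the finite dimensionality of the low-degree part, this tail bound shows that bounded sets in $\rH^1(\S^2)$ are totally bounded in $\rL^2(\S^2)$.

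\emph{Upgrade to all $p$.} Take a bounded sequence $(u_n)$ in $\rH^1(\S^2)$ and a subsequence that is Cauchy in $\rL^2$. For $p\in(2,\infty)$, set $r=2p$ and interpolate:
$$\|u_n-u_m\|_p\le \|u_n-u_m\|_2^{\vartheta}\,\|u_n-u_m\|_{2p}^{1-\vartheta},\qquad \vartheta\in(0,1).$$
The second factor is bounded by the continuous embedding into $\rL^{2p}$, so the subsequence is Cauchy in $\rL^p$. For $p\in[1,2]$, Cauchy in $\rL^p$ follows directly from Cauchy in $\rL^2$ and finiteness of the measure. This proves that $\rH^1(\S^2)\hookrightarrow\rL^p(\S^2)$ is compact for every $p\in[1,\infty)$.
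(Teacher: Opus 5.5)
Your proof is correct. The paper gives no argument for this lemma; it only cites Aubin's Kondrakov theorem (Theorem 2.34). Your proposal supplies a self-contained version of the standard chart-localisation proof behind that reference:
\begin{itemize}
\item a partition of unity subordinate to the six open hemispheres;
\item the Euclidean Rellich theorem for compactness into $\rL^2$;
\item the two-dimensional Gagliardo--Nirenberg inequality for the continuous embeddings into every $\rL^r$, $r<\infty$;
\item the interpolation $\|f\|_p\le \|f\|_2^{1/(p-1)}\|f\|_{2p}^{(p-2)/(p-1)}$ for $p>2$.
\end{itemize}
Your spherical-harmonic tail estimate is a valid chart-free alternative for the $\rL^2$ step. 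It exploits the explicit spectrum of $-\Delta_{\S^2}$, but you still need the charts, or some other device, for the $\rL^r$ bounds.

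One statement should be corrected. The hemispherical graph charts do not have a smooth inverse up to the boundary: the map $(x,y)\mapsto(x,y,\sqrt{1-x^2-y^2})$ degenerates at $x^2+y^2=1$. There both the induced metric and the area density $(1-x^2-y^2)^{-1/2}$ blow up, so the two-sided bounds you assert fail on the whole chart. The same objection applies to your mention of $\sin\theta$, which is not bounded below near the poles.

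This is harmless, because each $\operatorname{supp}\chi_j$ is compact in its open hemisphere. Every norm comparison you make therefore takes place over a closed disc of radius $\rho<1$, where the two-sided bounds do hold. The functions $v_j$ are supported there as well, which is also what puts them in $\rH^1_0(D)$ for the Rellich step.
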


\begin{rem}[Identification of the operator domain]
\label{rem: H2 = dom A_k sphere}
Let $A_\kappa$ denote the $\rL^2(\mathbb{S}^2)$-realisation of
$$
A_\kappa u = -\div _{\mathbb{S}^2} (\kappa \nabla_{\mathbb{S}^2} u )
$$
associated with the closed symmetric form
$$
a_\kappa(u,v) = \int_{\mathbb{S}^2}  \kappa \nabla_{\mathbb{S}^2}u \cdot \nabla_{ \mathbb{S}^2}v\d \omega,
\qquad u,v\in \rH^1(\mathbb{S}^2).
$$
Since $\kappa \in \rW^{1,\infty}(\mathbb{S}^2)$ and
$ \kappa \geq k_*>0$, the operator is uniformly elliptic with Lipschitz
coefficients. Hence the standard elliptic $\rH^2$ regularity theorem on the
compact manifold $\mathbb{S}^2$ gives
$$
u \in D(A_\kappa) \quad \Longrightarrow \quad
u\in \rH^2(\mathbb{S}^2).
$$
Conversely, if $u\in \rH^2(\mathbb S^2)$, then
$$
A_\kappa u = -\kappa \Delta_{\mathbb{S}^2 } u - \nabla_{\mathbb{S}^2} \kappa \cdot \nabla_{\mathbb{S}^2}u
\in \rL^2(\mathbb{S}^2),
$$
because $\kappa,\nabla_{\mathbb{S}^2} \kappa \in \rL^\infty$. Therefore
$ u\in D(A_\kappa)$. Thus
$$
D(A_\kappa) = \rH^2 (\mathbb{S}^2),
$$
with equivalence between the graph norm of $A_\kappa$ and the usual
$\rH^2$-norm.
\end{rem}
\noindent 
We are now ready to prove the existence and the properties of the minimisers of the energy functional $G_\varepsilon$.
\begin{prop}[Existence and regularity of minimisers on $\S^2$]
\label{prop:minimiser-sphere-regularity}
Let
$$
\kappa \in \rW^{1,\infty}(\mathbb S^2), \qquad Q_\varepsilon\in \rC(\mathbb S^2), \qquad \kappa\ge k_*>0,
\qquad Q_\varepsilon \geq 0,
$$
and let $q \geq0 $. Then the energy functional
$$
G _\varepsilon (u) := \int_{\mathbb{S}^2} \left(     \mathfrak{e}  \frac{\abs{u}^5}{5}- Q_\varepsilon(\omega) B(u) - q u \right) \d \omega + \frac{1}{2} \int_{\mathbb{S}^2} \kappa (\omega) \abs{ \nabla_{\mathbb{S}^2} u}^2 \d \omega, 
$$
admits a minimiser
$u_0 \in \rH^1(\mathbb{S}^2)$. Moreover, $u_0 \geq 0$ a.e. on $\mathbb S^2$, and $u_0$
satisfies
$$
\int_{\mathbb{S}^2} \kappa \nabla_{\mathbb{S}^2} u_0\cdot\nabla_{\mathbb{S}^2} \phi \d\omega
+ \int_{\mathbb{S}^2} ( \mathfrak{e}|u_0|^3u_0 - Q_\varepsilon \beta(u_0)-q
)\phi\d\omega =0
$$
for every $\phi\in \rH^1(\mathbb{S}^2)$. If $A_\kappa$ denotes the self-adjoint operator associated with the form
$$
a_\kappa(u,v) := \int_{\mathbb{S}^2} \kappa \nabla_{\mathbb{S}^2} u \cdot\nabla_{\mathbb{S}^2}v\,d\omega,
\qquad u, v\in \rH^1(\mathbb S^2),
$$
then $
u_0\in D(A_\kappa) = \rH^2(\mathbb{S}^2)$ and $
A_\kappa u_0 = Q_\varepsilon \beta(u_0)+q-\mathfrak{e}|u_0|^3u_0 $ in  $\rL^2(\mathbb S^2).$
\end{prop}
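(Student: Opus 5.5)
The plan mirrors the four-step proof of \autoref{prop: existence minim}, with the weighted interval tools replaced by their spherical counterparts, followed by an elliptic regularity step. Throughout we set $\mathfrak{e}=1$.

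\emph{Existence.} Since $B$ grows at most linearly by \eqref{eq: inequality B(u)} and $Q_\varepsilon\in\rL^\infty(\S^2)$, the integrand satisfies $\tfrac{|s|^5}{5}-Q_\varepsilon B(s)-qs\ge c_0|s|^5-C_0$. Hence
$$
G_\varepsilon(u)\ge \tfrac{k_*}{2}\|\nabla_{\S^2}u\|_2^2+c_0\|u\|_5^5-C_0|\S^2|,
$$
and because $\rL^5(\S^2)\hookrightarrow\rL^2(\S^2)$, the functional is coercive on $\rH^1(\S^2)$. Take a minimising sequence. It is bounded in $\rH^1$, so a subsequence converges weakly in $\rH^1$. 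By Rellich--Kondrachov it also converges strongly in $\rL^2$ and a.e. We then pass to the limit term by term:
\begin{itemize}
\item Fatou's lemma handles the $|u|^5$ term;
\item the global Lipschitz continuity of $B$ together with $\rL^1$ convergence handles the terms $Q_\varepsilon B(u)$ and $qu$;
\item the gradient term $\int\kappa|\nabla u|^2$ is a convex continuous quadratic form on $\rH^1$, hence weakly lower semicontinuous.
\end{itemize}
This gives a minimiser $u_0$.

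\emph{Non-negativity.} We use the truncation $u_0^+$, which lies in $\rH^1(\S^2)$ with $\nabla u_0^+=\mathds 1_{\{u_0>0\}}\nabla u_0$. On the set $E=\{u_0<0\}$ we have $B(0)-B(u_0)\ge0$ because $\beta\ge0$, $-qu_0\ge0$, and $|u_0|^5>0$. Therefore $|E|>0$ would give $G_\varepsilon(u_0^+)<G_\varepsilon(u_0)$, which is impossible. This step is verbatim the argument of Step 4 of \autoref{prop: existence minim}.

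\emph{Euler--Lagrange equation.} We must check that $G_\varepsilon$ is G\^ateaux differentiable on $\rH^1(\S^2)$. The only term needing care is $|u|^5/5$, whose derivative $|u|^3u\,\phi$ is integrable for $u,\phi\in\rH^1$ because $\rH^1\hookrightarrow\rL^p$ for every $p<\infty$. Differentiability then follows from dominated convergence applied to difference quotients, using $\big||u+t\phi|^5-|u|^5\big|\le C|t||\phi|(|u|+|\phi|)^4$. Setting $\frac{d}{dt}G_\varepsilon(u_0+t\phi)|_{t=0}=0$ gives the weak equation for all $\phi\in\rH^1(\S^2)$.

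\emph{Regularity.} The weak equation says $a_\kappa(u_0,\phi)=(f,\phi)_{\rL^2}$ with $f:=Q_\varepsilon\beta(u_0)+q-|u_0|^3u_0$. The point requiring attention is that $f\in\rL^2$. This holds without any $\rL^\infty$ bound, since $|u_0|^4\in\rL^2$ via $u_0\in\rL^8$, which follows from $\rH^1\hookrightarrow\rL^8$ on the two-dimensional sphere; the remaining terms are bounded. By the definition of the operator associated with a closed form, $u_0\in D(A_\kappa)$ and $A_\kappa u_0=f$. Finally, \autoref{rem: H2 = dom A_k sphere} identifies $D(A_\kappa)=\rH^2(\S^2)$.

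If a bounded minimiser were preferred, one could alternatively truncate from above as in \autoref{lem: C_0 boundedness minimiser}, but the embedding argument is enough.
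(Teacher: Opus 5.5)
Your proposal is correct and follows the paper's proof essentially step by step: coercivity plus the direct method, truncation by $u_0^+$ for non-negativity, G\^ateaux differentiability for the weak Euler--Lagrange equation, and the embedding $\rH^1(\S^2)\hookrightarrow\rL^8(\S^2)$ to place the right-hand side in $\rL^2$, which gives $u_0\in D(A_\kappa)=\rH^2(\S^2)$. The differences are only cosmetic: you control $\|u\|_2$ directly via $\rL^5\hookrightarrow\rL^2$ rather than through the paper's Poincar\'e-plus-mean argument, and you use Fatou's lemma for the quintic term where the paper uses strong $\rL^5$ convergence from Rellich--Kondrachov.
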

\begin{proof}
Assume for simplicity that $\mathfrak{e} =1$. From \eqref{eq: inequality B(u)} and $Q_\varepsilon\in \rL^\infty(\mathbb{S}^2)$, there exist constants $C_0,C_1>0$ such that for all $s\in\mathbb R$ and $\omega\in\mathbb S^2$
$$
\frac{\abs{s}^5}{5}- Q_\varepsilon(\omega) B(s) - qs \geq \frac{\abs{s}^5}{5}-C_0-C_1\abs{s}.
$$
Since the term $\abs{s}^5$ dominates the linear term for large $\abs{s}$, there exist $c_2,c_3>0$ such that
$$
\frac{\abs{s}^5}{5}- Q_\varepsilon(\omega) B(s) - qs \geq c_2 \abs{s}^5- c_3.
$$
Integrating the previous inequality, we get
\begin{equation}
 G_\varepsilon (u) \geq c_2 \| u \|^5_{5}- c_3 \abs{\mathbb{S}^2}+ \frac{1}{2} \int_{\mathbb{S}^2} \kappa \abs{ \nabla_{\mathbb{S}^2}u}^2 \d \omega .
\label{eq: spehere var problem bound integral}
\end{equation}

\textit{Step 1: Existence of the minimiser in $\rH^1$}.\\ Let $(u_n)_n \subset \rH^1(\mathbb{S}^2)$ a minimising sequence, i.e. $ G _\varepsilon (u_n) \rightarrow \inf  {G}_\varepsilon $.  Then, $\sup_n  {G}_\varepsilon (u_n) < \infty $ and by \eqref{eq: spehere var problem bound integral} we deduce the uniform bounds
$$
\sup_n \|u_n\|_{5} < \infty, \qquad \sup_n \|\nabla _{\mathbb{S}^2}u_n\|_{2}<\infty.
$$
Let $\bar u_n:=\frac{1}{\abs{\mathbb{S}^2}} \int_{\mathbb{S}^2} u_n\d\omega $. Since $\rL^5(\mathbb{S}^2)\hookrightarrow \rL^1(\mathbb{S}^2)$, we have
$$
\abs{\bar{u}_n}\leq\frac{1}{\abs{\mathbb{S}^2}} \|u_n\|_{1} \leq C\|u_n \|_{5},
$$
so $(\bar u_n)_n$ is bounded. By Poincar\'e inequality
$$
\|u_n \|_2\leq \|u_n -\bar{u}_n\|_2 + \abs{\bar{u}_n} \abs{\mathbb{S}^2}^{1/2}\leq C(\| \nabla_{\mathbb{S}^2} u_n\|_2 +1  ),
$$
hence $(u_n)_n$ is bounded in $\rH^1 (\mathbb{S}^2).$ By boundedness, up to a subsequence, there exists $u_0 \in \rH^1(\mathbb{S}^2)$ such that $
u_n \rightharpoonup u_0 $ in $  \rH^1 (\mathbb{S}^2).$ By Rellich-Kondrachov Theorem, we also have
\begin{equation}
u_n  \xrightarrow{} u_0 \; \text{ in } \rL^1(\mathbb{S}^2),\quad  u_n \xrightarrow{} u_0 \; \text{ in } \rL^5(\mathbb{S}^2), \quad u_n \xrightarrow{} u_0 \; \text{ a.e. in }  \mathbb{S}^2.
\label{eq: Lp convergence and pointwise sphere variational problem}
\end{equation}
Consider the decomposition $G _\varepsilon = G^1 +    G^2,$ with
$$
G^1(u) =\int_{\mathbb{S}^2} \left(       \frac{\abs{u}^5}{5}- Q_\varepsilon(\omega) B(u) - q u \right) \d \omega, \qquad G^2 (u) = \frac{1}{2} \int_{\mathbb{S}^2} \kappa (\omega) \abs{ \nabla_{\mathbb{S}^2} u}^2 \d \omega.
$$
Note that, using that $u_n \xrightarrow[]{} u_0 $ in $\rL^5$, we have $ \int \abs{ u_n}^5 \xrightarrow[]{} \int \abs {u_0}^5$. Moreover, since $\beta$ is bounded and $B'=\beta$, the primitive $B$ is globally Lipschitz. Hence, using the strong convergence
$u_n\to u_0$ in $\rL^1(\mathbb S^2)$, we obtain
$$
\|B(u_n)-B(u_0)\|_{1} \leq \|\beta\|_{\infty} \|u_n-u_0\|_{1} \to 0.
$$
Since $Q_\varepsilon\in \rL^\infty(\mathbb S^2)$, this gives $
Q_\varepsilon B(u_n)\to Q_\varepsilon B(u_0)$ in $
\rL^1(\mathbb S^2).$ Thus
$
G^1(u_n) \xrightarrow[]{} G^1 (u_0).$ For $G^2$, since $\nabla_{\mathbb S^2}u_n\rightharpoonup
\nabla_{\mathbb S^2}u_0$ in $\rL^2(\mathbb S^2)$ and multiplication by $\sqrt\kappa\in \rL^\infty(\mathbb S^2)$ is continuous on $\rL^2$, we have
$$
\sqrt \kappa \nabla_{\mathbb{S}^2}u_n \rightharpoonup \sqrt\kappa \nabla_{\mathbb{S}^2}u_0 \quad \text{in }\rL^2(\mathbb S^2).
$$
Therefore, by weak lower semicontinuity of the $\rL^2$-norm,
$$
G^2(u_0) \leq \liminf_{ n\to\infty} G^2(u_n).
$$
Therefore
$$
 G_\varepsilon (u_0) \leq \liminf_{n \to \infty}  G_\varepsilon (u_n) = \inf_{ \rH^1 (\mathbb{S}^2)}  G _\varepsilon.
$$
Thus $u_0$ is a minimiser.

\textit{Step 2: $u_0\ge 0$.}\\
Set $u_0^+:=\max \{u_0,0\}$ and $u_0^-:=\max\{-u_0,0\}$, so that $u_0=u_0^+-u_0^-$ and $u_0^+u_0^-=0$ a.e.
Since the map $T(s):=s_+=\max\{s,0\}$ is $1$-Lipschitz, the Sobolev chain rule gives
$u_0^+\in \rH^1(\mathbb S^2)$ and
$$
\nabla_{\mathbb{S}^2}u_0^+=\nabla_{\mathbb S^2}u_0 \, \mathds{ 1}_{\{u_0>0\}} \qquad\text{a.e. on }\mathbb S^2,
$$
hence, using $\kappa \geq k_*>0$,
\begin{equation}\label{eq:grad-trunc-sphere}
\int_{\mathbb {S}^2}\kappa |\nabla_{\mathbb S^2}u_0^+|^2\d\omega \leq \int_{\mathbb {S}^2}\kappa \vert \nabla_{\mathbb{S}^2}u_0 \vert ^2 \d \omega .
\end{equation}

For the potential part, define for fixed $\omega\in\mathbb S^2$ the function
$$
F_\omega(s):=\frac{|s|^5}{5}-Q_\varepsilon(\omega)B(s)-qs .
$$
Since $\beta \geq 0$, we have $B'= \beta\geq 0$, so $B$ is non-decreasing and in particular
$B(s)\leq  B(0)$ for all $s\leq 0$. Therefore, for $s\leq 0$ and $q\geq 0$,
$$
F_\omega(s)-F_\omega(0) =\frac{\vert s \vert^5}{5}- Q_\varepsilon(  \omega)\left(B(s)-B(0)\right)-qs
=\frac{\vert s\vert^5}{5}+Q_\varepsilon(\omega)\left(B(0)-B(s)\right)+q \vert s \vert  \geq\ 0,
$$
with strict inequality if $s<0$. Hence pointwise a.e.
$$
F_\omega(u_0^+(\omega)) \leq F_\omega(u_0(\omega)),
$$
and integrating gives
\begin{equation}\label{eq:pot-trunc-sphere}
\int_{\S^2}\left(\frac{|u_0^+|^5}{5}-Q_\varepsilon B(u_0^+)-q u_0^+\right)\d\omega \leq \int_{\S^2}\left(\frac{|u_0|^5}{5}-Q_\varepsilon B(u_0)-q u_0\right)\d\omega,
\end{equation}
again with strict inequality if $\{u_0<0\}$ has positive measure.

Combining \eqref{eq:grad-trunc-sphere} and \eqref{eq:pot-trunc-sphere} we obtain
$$
G_\varepsilon(u_0^+)\leq G_\varepsilon(u_0).
$$
Since $u_0$ is a minimiser, equality must hold, which forces $\vert \{u_0<0 \} \vert =  0$.
Therefore $u_0\geq 0$ a.e. on $\mathbb {S}^2$.

\textit{Step 3: Euler-Lagrange equation (weak stationary EBM).}\\
We claim that $G_\varepsilon$ is Gateaux differentiable on $\rH^1(\mathbb S^2)$ and for every $u,\phi\in \rH^1(\mathbb{S}^2)$,
\begin{equation}\label{eq:EL-derivative-sphere}
\left\langle  G_\varepsilon'(u),\phi \right\rangle =\int_{\mathbb{S}^2}\left(|u|^3u - Q_\varepsilon(\omega)\beta(u) - q\right) \phi \d \omega +\int_{\mathbb{S}^2}\kappa(\omega) \nabla_{\mathbb{S}^2}u\cdot \nabla_{\S^2}\phi\d\omega .
\end{equation}
Indeed, on the compact manifold $\mathbb S^2$ we have the continuous embeddings
$\rH^1(\mathbb{S}^2)\hookrightarrow \rL^p(\mathbb{S}^2)$ for every $p<\infty$, hence all terms above are integrable; differentiating
$t\mapsto \int \frac{\vert u+t\phi\vert^5}{5}$ gives $\int |u|^3u\,\phi$, differentiating $t\mapsto \int Q_\varepsilon B(u+t\phi)$ gives
$\int Q_\varepsilon \beta(u)\phi$, and the quadratic gradient term yields
$\int \kappa\,\nabla u\cdot\nabla\phi$.

Now fix $\phi\in \rH^1(\mathbb S^2)$ and consider $\varphi(t):= G_\varepsilon(u_0+t\phi)$.
Since $u_0$ is a global minimiser, $t=0$ is a minimum of $\varphi$, hence $\varphi'(0)=0$.
Using \eqref{eq:EL-derivative-sphere} we conclude that for every $\phi\in \rH^1(\mathbb S^2)$
$$
\int_{\mathbb {S}^2}\kappa(\omega)\nabla_{\mathbb {S}^2}u_0\cdot \nabla_{\mathbb {S}^2}\phi\d\omega
+\int_{\mathbb{S}^2}\left( \vert u_0 \vert ^3u_0 - Q_\varepsilon(\omega) \beta(u_0) - q\right) \phi\d\omega=0.
$$
This is exactly the weak formulation of the stationary EBM on $\mathbb S^2$.

\textit{Step 4: Regularity bootstrap.}\\
We now improve the regularity of $u_0$. Let $A_\kappa$ be the operator associated
with the closed form
$$
a_\kappa(u,v) := \int_{\S^2} \kappa\nabla_{\mathbb S^2}u\cdot\nabla_{\mathbb S^2}v \d\omega,\qquad u, v\in \rH^1(\mathbb S^2).
$$
The weak Euler-Lagrange equation obtained above can be rewritten as
$$
A_\kappa u_0 = Q_\varepsilon \beta(u_0)+q-|u_0|^3u_0
$$
in the sense of distributions.

We claim that the right-hand side belongs to $\rL^2(\mathbb S^2)$. Indeed, since
$\mathbb S^2$ is two-dimensional and compact, we have
$$
\rH^1(\mathbb S^2)\hookrightarrow \rL^p(\mathbb S^2) \qquad
\text{for every }1\le p<\infty.
$$
In particular $u_0\in \rL^8(\mathbb S^2)$, and therefore
$$
|u_0|^3u_0\in \rL^2(\mathbb S^2).
$$
Moreover, $Q_\varepsilon \in \rC(\mathbb S^2)\subset \rL^\infty(\mathbb S^2)$, and
$\beta$ is bounded. Hence
$$
Q_\varepsilon\beta(u_0)+q-|u_0|^3u_0\in \rL^2(\mathbb S^2).
$$
By the definition of the operator associated with the form, this implies
$$
u_0\in D(A_\kappa), \qquad A_\kappa u_0 = Q_\varepsilon\beta(u_0)+q-|u_0|^3u_0
\quad \text{in }\rL^2(\mathbb S^2).
$$

Since $\kappa\in \rW^{1,\infty}(\mathbb S^2)$ and $\kappa\ge k_*>0$, the operator
$A_\kappa$ is uniformly elliptic with Lipschitz coefficients. Standard elliptic $\rH^2$-regularity for uniformly elliptic divergence-form operators with Lipschitz coefficients gives $
u_0\in \rH^2(\mathbb S^2).$
\end{proof}

\subsection{Three-climate structure on $\mathbb S^2$}
\noindent 
We now develop on $\mathbb S^2$ the same variational programme as in the
degenerate one-dimensional case: Palais-Smale compactness, persistence of
the three homogeneous equilibria, mountain-pass existence, and identification
under an additional isolation assumption.

\subsubsection{Palais-Smale compactness}
\noindent
We first verify the compactness property required for the mountain-pass
argument.

\begin{prop}[Palais-Smale condition on $\mathbb{S}^2$]
\label{prop: Palais Smale sphere}
Let $ q \in \mathbb{R}$ be fixed. Assume $ \kappa \in \rL^{\infty}(\mathbb{S}^2),$ with $ \kappa ( \omega ) \geq k_*>0$ and $ Q_\varepsilon \in  \rL^\infty (\mathbb{S}^2)$. Let $\beta$ and its primitive $B$ satisfy \eqref{eq:extension coalbedo} and \eqref{eq: inequality B(u)} respectively. Then the functional
$$
G_\varepsilon(u )= \int_{\mathbb{S}^2 } \left(\mathfrak{e} \frac{|u|^5}{5} - Q_\varepsilon(\omega) B(u)-q u \right) \d \omega + \frac{1}{2} \int_{\mathbb {S}^2} \kappa( \omega ) \, \vert \nabla_{\mathbb{S}^2} u \vert ^2 \d \omega
$$
satisfies the Palais-Smale condition on $\rH^1(\mathbb S^2)$.
\end{prop}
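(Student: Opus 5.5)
The argument follows the proof of \autoref{prop:PS-degenerate}, with the weighted Poincar\'e inequality and the weighted compact embedding replaced by the Poincar\'e inequality and the Rellich-Kondrachov theorem on $\mathbb S^2$. We assume $\mathfrak e=1$. Let $(u_n)_n\subset \rH^1(\mathbb S^2)$ satisfy $\sup_n|G_\varepsilon(u_n)|<\infty$ and $G_\varepsilon'(u_n)\to 0$ in $(\rH^1(\mathbb S^2))^*$.

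\emph{Boundedness.} Since $B$ grows at most linearly and $Q_\varepsilon\in\rL^\infty$, the integrand satisfies $\frac{|s|^5}{5}-Q_\varepsilon B(s)-qs\ge c_2|s|^5-c_3$, with constants depending on $|q|$. The sign of $q$ plays no role, since only $|q|\,|s|$ needs to be absorbed. This gives the analogue of \eqref{eq: spehere var problem bound integral}:
$$
G_\varepsilon(u)\ge c_2\|u\|_5^5-c_3|\mathbb S^2|+\tfrac{k_*}{2}\|\nabla_{\mathbb S^2}u\|_2^2 .
$$
The upper bound on $G_\varepsilon(u_n)$ then yields uniform bounds on $\|u_n\|_5$ and on $\|\nabla_{\mathbb S^2}u_n\|_2$. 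Hence the means $\bar u_n$ are bounded, because $\|u_n\|_1\le C\|u_n\|_5$. By the Poincar\'e inequality on $\mathbb S^2$, $(u_n)_n$ is bounded in $\rH^1(\mathbb S^2)$. Up to a subsequence, $u_n\rightharpoonup u$ in $\rH^1$. By Rellich-Kondrachov, $u_n\to u$ in every $\rL^p$, $p<\infty$, and a.e.

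\emph{Strong convergence.} Write $A(w)=-\div_{\mathbb S^2}(\kappa\nabla_{\mathbb S^2} w)+|w|^3w$ in weak form and $L(w)=Q_\varepsilon\beta(w)+q$, so that $G_\varepsilon'=A-L$. Testing with $u_n-u$ and using the monotonicity of $s\mapsto|s|^3s$ gives
$$
\langle A(u_n)-A(u),u_n-u\rangle\ge k_*\|\nabla_{\mathbb S^2}(u_n-u)\|_2^2 .
$$
The left-hand side tends to zero, by the decomposition used in the one-dimensional case:
\begin{itemize}
\item $\langle G_\varepsilon'(u_n),u_n-u\rangle=o(1)$, because $(u_n-u)_n$ is bounded in $\rH^1$;
\item $\int (Q_\varepsilon\beta(u_n)+q)(u_n-u)\d\omega\to0$, because the first factor is bounded in $\rL^\infty$ and $u_n\to u$ in $\rL^1$;
\item $\langle A(u),u_n-u\rangle\to0$. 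The gradient part vanishes by weak convergence in $\rH^1$, since $\kappa\nabla u\in\rL^2$. The nonlinear part vanishes because $|u|^3u\in\rL^{2}$, using $\rH^1\hookrightarrow\rL^8$, while $u_n\to u$ in $\rL^2$.
\end{itemize}
Combined with $u_n\to u$ in $\rL^2$, this gives $u_n\to u$ in $\rH^1(\mathbb S^2)$.

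The step needing most care is the well-posedness of the derivative identity for a general $\kappa\in\rL^\infty$: one must check that all pairings make sense on $\rH^1(\mathbb S^2)$. This follows from the embeddings into $\rL^p$ for all $p<\infty$, which on the sphere are actually stronger than in the degenerate one-dimensional setting. Beyond that, the argument is a direct transcription of \autoref{prop:PS-degenerate}.
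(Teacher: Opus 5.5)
Your proposal is correct and follows essentially the paper's route. Both arguments go through a coercive lower bound, $\rH^1$-boundedness via the bounded means and the Poincar\'e inequality on $\S^2$, Rellich--Kondrachov, and testing $G_\varepsilon'(u_n)$ with $u_n-u$; reading the gradient bound directly off the coercive estimate is more direct than the paper's quadratic-polynomial argument. The main difference is in the final step: you discard the quintic term by monotonicity of $s\mapsto|s|^3s$, as in the paper's one-dimensional proof, whereas the paper's spherical proof kills $\int|u_n|^3u_n(u_n-u)\d\omega$ directly via $\|u_n\|_5^4\|u_n-u\|_5\to0$ and then upgrades weak to strong convergence of $\sqrt{\kappa}\nabla_{\S^2}u_n$ through convergence of norms.
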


\begin{proof}
Assume for simplicity that $\mathfrak{e} =1$. Let $(u_n)_n\subset \rH^1(\mathbb S^2)$ be a Palais-Smale sequence, i.e.
$$
G_\varepsilon(u_n)\ \text{is bounded},\qquad G_\varepsilon'(u_n)\to 0\ \; \text{in }(\rH^1(\mathbb S^2))^*.
$$
We prove that $(u_n)$ admits a strongly convergent subsequence in $\rH^1(\mathbb{S}^2)$.

\textit{Step 1: uniform bounds in $\rH^1$.}\\
Since $ Q_\varepsilon \in \rC(\mathbb S^2)$, we have
$ \| Q_\varepsilon \|_{\infty}<\infty$. Hence, from
$ \vert B(s) \vert  \leq | B_0 | + | s |$, there exist constants $ C_0 , C_1 > 0$
such that, for every $\omega \in \mathbb{S}^2$ and $s \in \mathbb R$,
\begin{equation}
\frac{ |s|^5 }{ 5 }- Q_\varepsilon( \omega) B ( s ) - qs \geq - C_0 - C_1 |s |.
\label{eq: bound with L1 potential sphere}
\end{equation}
Since $\frac{|s|^5}{5}$ dominates the linear term for large $|s|$, there also exist $C_2,C_3>0$ with
$$
\frac{\vert s \vert ^5}{5}-Q_\varepsilon(\omega)B(s)-q s \geq  C_2 \vert s \vert ^5 - C_3\qquad\forall s, \omega.
$$
Integrating and using that $G_\varepsilon(u_n)$ is bounded from above yields a uniform $\rL^5$ bound
\begin{equation}\label{eq:Palais smale sphere L5 bound}
\|u_n\|_{5}\leq M.
\end{equation}
In particular, since $\mathbb{S}^2$ has finite measure, $\rL^5(\mathbb S^2)\hookrightarrow \rL^1(\mathbb S^2)$ and thus
$$
\|u_n\|_{1}\leq C \|u_n\|_{5}\leq C M.
$$

Define the mean
$$
m_n: = \frac{1}{ \vert \mathbb{S}^2 \vert } \int_{\mathbb{S}^2} u_n \d \omega, \qquad v_n : = u_n - m_n,
\qquad \int_{\mathbb{S}^2} v_n \d \omega = 0.
$$
Then $ \vert m_n \vert \leq \frac{1}{ \vert \mathbb{S}^2 \vert} \| u_n \|_{1} \leq C$ uniformly.
By the Poincar\'e inequality on $\mathbb{S}^2$,
$$
\| v_n \|_{ 2}^2 \leq   C_P \| \nabla_{ \mathbb{S}^2} v_n\|_{2}^2
= C_P \| \nabla_{\mathbb{S}^2} u_n\|_{ 2}^2.
$$
Using $\kappa\geq k_*>0$ we have
$$
\| \nabla_{ \mathbb{S}^2 } u_n \|_2^2 \leq \frac{1}{k_*} \int_{ \mathbb{S}^2 } \kappa | \nabla_{\mathbb {S}^2} u_n |^2 \d\omega.
$$
Hence 
\begin{equation}
    \| u_n \|_2^2 \leq 2 \| v_n\|_2^2 + 2| \mathbb{S}^2| \,m_n^2
\leq \frac{2C_P}{k_*} \int_{\mathbb{S}^2} \kappa\,|\nabla_{ \mathbb {S}^2}u_n|^2\d\omega + C.
\label{eq: bound u_n L2 norm sphere}
\end{equation}
Introduce the shorthand
$$
A_n:=\int_{\mathbb S^2}\kappa(\omega)\,|\nabla_{\mathbb S^2}u_n|^2\d \omega, \qquad \|u\|_{1,\kappa}^2:=\|u\|_2^2 + A(u),\ \ A(u):=\int_{\mathbb S^2}\kappa|\nabla u|^2.
$$
Then \eqref{eq: bound u_n L2 norm sphere} can be rewritten as
$$
\|u_n\|_2^2 \leq C_1 A_n + C_2,
$$
and therefore
\begin{equation}
\|u_n\|_{1,\kappa}^2 = \|u_n\|_2^2 + A_n \leq (C_1+1)A_n + C_2,
\qquad\text{i.e.}\qquad
A_n \geq \frac{1}{C_1+1}\left(\|u_n\|_{1,\kappa}^2 - C_2\right).
\label{eq: lower bound A_n}
\end{equation}

On the other hand, integrating in space \eqref{eq: bound with L1 potential sphere}, we obtain that there exist constants $C',C''>0$ such that
$$
G_\varepsilon(u_n)\geq -C' - C''\|u_n\|_{1} + \frac{1}{2} A_n
\geq -C' - C''\|u_n\|_{1,\kappa} + \frac{1}{2} A_n,
$$
where in the second inequality we have used that $  \| u \|_1 \lesssim  \| u \|_2 \lesssim \| u \|_{1,\kappa}$. Combining the previous inequality with \eqref{eq: lower bound A_n}, we arrive at
$$
G_\varepsilon(u_n)\geq -C' - C''\|u_n\|_{1,\kappa} + \frac{1}{2(C_1+1)}\left(\|u_n\|_{1,\kappa}^2 - C_2\right).
$$
Thus there is a quadratic polynomial $P(X)=\alpha X^2 - \beta X - \gamma$, with $\alpha>0$, such that
$$
G_\varepsilon(u_n)\geq P(\|u_n\|_{1,\kappa}).
$$
Since $\{G_\varepsilon(u_n)\}_n$ is bounded, the values $\|u_n\|_{1,\kappa}$ must remain bounded. Therefore $\{u_n\}$ is bounded in $H^1(\mathbb{S}^2)$.

\textit{Step 2: compactness.}\\
From boundedness in $\rH^1(\mathbb{S}^2)$ and the compact embeddings
$\rH^1(\mathbb{S}^2)\hookrightarrow \rL^2(\mathbb{S}^2)$ and $\rH^1(\mathbb {S}^2)\hookrightarrow \rL^5(\mathbb{S}^2)$,
extract a subsequence, that we do not rename, such that
\begin{equation}
\begin{aligned}
u_n       \rightharpoonup  u \;   \text{in } \rH^1(\mathbb{S}^2),    \qquad
u_n \to    u   \;    \text{in } \rL^2 ( \mathbb{S}^2)\; \text{and in } \rL^5( \mathbb{S}^2), \quad
u_n \to u \ \text{a.e.}
\end{aligned}
\label{eq: convergence norms PS proof sphere}
\end{equation}

\textit{Step 3: strong convergence in $\rH^1$.}\\
For $w,\varphi\in \rH^1 ( \mathbb{S}^2)$, the Gateaux derivative of $G_\varepsilon$ is
$$
\langle G_\varepsilon'(w),\varphi\rangle
=\int_{\mathbb{S}^2}\left(|w|^3w - Q_\varepsilon(\omega) \beta(w) - q\right)\varphi\d\omega
+\int_{\mathbb{S}^2}\kappa(\omega) \nabla_{\mathbb {S}^2}w\cdot \nabla_{\mathbb{S}^2}\varphi \d \omega.
$$
Since $G_\varepsilon'(u_n)\to 0$ in $(H^1)^*$, choosing $\varphi=u_n-u$ gives
\begin{equation}\label{eq:PS sphere aux1}
\int_{\mathbb S^2}\kappa \nabla u_n\cdot(\nabla u_n-\nabla u)\d\omega =
-\int_{\mathbb{S}^2}\left(\vert u_n \vert ^3u_n - Q_\varepsilon \beta (u_n) - q\right)(u_n-u)\d\omega + o(1).
\end{equation}
We show that the right-hand side tends to $0$.
For the leading term, using H\"older inequality, the boundedness from \eqref{eq:Palais smale sphere L5 bound} and \eqref{eq: convergence norms PS proof sphere}, we have
$$
\left|\int_{\mathbb{S}^2} \vert u_n \vert ^3u_n\,(u_n-u)\d\omega\right|
\leq \|u_n\|_{5}^4 \|u_n-u\|_{5} \to  0.
$$
For the lower order term, $\beta $ is bounded and $Q_\varepsilon\in \rL^\infty$, hence
$Q_\varepsilon \beta(u_n)+q$ is bounded in $\rL^\infty(\mathbb S^2)$ and
$$
\left| \int_{ \mathbb{S}^2}(Q_\varepsilon  \beta ( u_n ) + q)(u_n-u) \d \omega \right| \leq C \| u_n - u \|_{ 1}
\leq C \vert \mathbb{S}^2 \vert ^{1/2} \| u_n - u \|_{2} \to 0,
$$
where the convergence follows again by \eqref{eq: convergence norms PS proof sphere}. Therefore the right-hand side of \eqref{eq:PS sphere aux1} converges to $0$, and we obtain
$$
\int_{\mathbb{S}^2} \kappa \nabla u_n \cdot( \nabla u_n- \nabla u)\d\omega \to 0.
$$
But
$$
\int_{\mathbb{S}^2}\kappa \nabla u_n\cdot(\nabla u_n-\nabla u)\d\omega
=\|\sqrt{\kappa} \nabla u_n\|_2^2 - \int_{\mathbb S^2}\kappa\,\nabla u_n\cdot\nabla u\d\omega.
$$
Using the weak convergence $\sqrt{\kappa}    \nabla u_n\rightharpoonup \sqrt{\kappa} \nabla u$ in $\rL^2$ (which is a consequence of \eqref{eq: convergence norms PS proof sphere} and $\kappa \in \rL^\infty$, $\kappa \geq \kappa_* >0$),
$$
\int_{\mathbb{S}^2}\kappa \nabla u_n\cdot\nabla u\d \omega
\to  \int_{\mathbb {S}^2}\kappa \vert\nabla u\vert ^2\d\omega=\|\sqrt{\kappa}\nabla u\|_2^2.
$$
Hence
$$
\limsup_{n\to\infty}\|\sqrt{\kappa} \nabla u_n\|_2^2 \leq \|\sqrt {\kappa} \nabla u\|_2^2.
$$
By weak lower semicontinuity we also have
$$
\|\sqrt{\kappa} \nabla u\|_2^2 \leq \liminf_{n\to\infty}\|\sqrt{\kappa} \nabla u_n \|_2^2,
$$
so $\| \sqrt{\kappa } \nabla u_n \|_2 \to \| \sqrt{\kappa } \nabla u \|_2$. Combining this with $ \sqrt{\kappa } \nabla u_n  \rightharpoonup  \sqrt{\kappa } \nabla u $ in $L^2$, we arrive at
$$
\sqrt{\kappa}  \nabla u_n \to \sqrt{\kappa}  \nabla u \qquad\text{in }\rL^2(\mathbb{S}^2).
$$
Since $\kappa\ge k_*>0$, this implies
$$
\| \nabla_{\mathbb S^2}(u_n-u) \|_{2}^2 \leq \frac{1}{k_*}
\|  \sqrt{\kappa}\,     \nabla_{\mathbb{S}^2}(u_n-u)\|_{2}^2 \to 0.
$$
Together with $u_n \to u$ in $\rL^2( \mathbb{S}^2 )$ from \eqref{eq: convergence norms PS proof sphere}, this gives
$u_n \to u$ strongly in $\rH^1(\mathbb{S}^2)$.
Thus every Palais-Smale sequence admits a strongly convergent subsequence.
\end{proof}

\subsubsection{Perturbative saddle and mountain-pass critical point}
\noindent
We first analyse the homogeneous problem with constant insolation \(Q_*>0\),
for which the reference equilibria are constant. We then study their
persistence under the heterogeneous perturbation of the insolation.

We recall that in \autoref{subsec: main results} we have introduced the elliptic operator
$$
A_\kappa v:=
-\div_{\mathbb{S}^2}\left(\kappa ( \omega) \nabla_{\mathbb{S}^2}v \right).
$$
Moreover, we denote by $\lambda_1^\kappa$ the first positive eigenvalue of $A_\kappa$,
namely
\begin{equation}
    \lambda_1^\kappa =
\inf \left\lbrace  \frac{\int_{\mathbb{S}^2}\kappa(\omega) \abs{ \nabla_{\mathbb{S}^2}v}^2 \d \omega}
{\int_{\mathbb{S}^2}v^2 \d \omega}
: v \in \rH^1(\mathbb{S}^2) , \ v \neq 0,\ 
\int_{\mathbb{S}^2}v \d\omega = 0
\right \rbrace .
\label{eq: definition lambda 1 kappa sphere}
\end{equation}

The following lemma translates the scalar curvature and spectral conditions
in \autoref{ass:homogeneous-three-well} into the variational classification
of the three homogeneous equilibria on the sphere.
\begin{lem}[Homogeneous equilibria and Morse indices]
\label{lem:homogeneous-equilibria-morse-sphere}
Assume \autoref{ass:homogeneous-three-well}.  Then the constant functions
$$
u_S^0 \equiv \theta_S , \qquad
u_M^0 \equiv\theta_M , \qquad
  u_W^0\equiv \theta_W
$$
are critical points of $G_{0}$. Moreover, $u_S^0$ and $u_W^0$
are strict local minimisers, while $u_M^0$ is non-degenerate and has Morse
index one.
\end{lem}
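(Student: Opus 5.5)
The plan mirrors the proof of \autoref{lem:homogeneous-equilibria-morse-degenerate}, with the weighted Poincar\'e inequality replaced by the definition \eqref{eq: definition lambda 1 kappa sphere} of $\lambda_1^\kappa$ on $\S^2$. Set $\mathfrak e=1$.

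\emph{Criticality.} By \eqref{eq:main-results-three-critical-points} we have $R'(\theta_i)=\theta_i|\theta_i|^3-Q_*\beta(\theta_i)-q=0$. Since $\nabla_{\S^2}u_i^0=0$, the derivative formula \eqref{eq:EL-derivative-sphere} with $Q_\varepsilon=Q_*$ gives $\langle G_0'(u_i^0),\phi\rangle = R'(\theta_i)\int_{\S^2}\phi\d\omega=0$ for every $\phi\in\rH^1(\S^2)$.

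\emph{Hessian.} Next compute the second variation. Since $s\mapsto|s|^5/5$ is $\rC^2$ with second derivative $4|s|^3$, and $\beta\in\rC^1_b$, the functional $G_0$ is twice differentiable at the constants, using $\rH^1(\S^2)\hookrightarrow\rL^p$ for all $p<\infty$. One gets
$$
D^2G_0(u_i^0)[v,v]=\int_{\S^2}\kappa|\nabla_{\S^2}v|^2\d\omega+\mu_i\int_{\S^2}v^2\d\omega .
$$

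\emph{Cases $i=S,W$.} Here $\mu_i>0$, so with $\kappa\ge k_*$,
$$
D^2G_0(u_i^0)[v,v]\ge\min\{k_*,\mu_i\}\,\|v\|_{\rH^1}^2 .
$$
To pass from a coercive Hessian to strict local minimality, I would use the Taylor expansion $G_0(u_i^0+v)=G_0(u_i^0)+\tfrac12D^2G_0(u_i^0)[v,v]+o(\|v\|_{\rH^1}^2)$. The remainder needs control of $\int\bigl(F(\theta_i+v)-F(\theta_i)-F'(\theta_i)v-\tfrac12F''(\theta_i)v^2\bigr)$ for $F=|s|^5/5-Q_*B$. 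I will bound it by $\int\omega_F(|v|)\,v^2$, where $\omega_F$ is a modulus of continuity of $F''$, growing like $|s|^3$. The $|v|^3$ part is controlled by the embedding into $\rL^5$. The $\beta'$ part uses that $\beta'$ is bounded and uniformly continuous on compacts, split on $\{|v|\le\delta\}$ and $\{|v|>\delta\}$, with Chebyshev and $\rL^4$ bounds. This remainder estimate is the only genuinely nontrivial step, and the same argument was used implicitly in the 1D case, so I expect it to go through routinely.

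\emph{Case $i=M$.} First, the constant direction gives $D^2G_0(u_M^0)[1,1]=\mu_M|\S^2|<0$, so the Morse index is at least one. Second, for mean-zero $v$, \eqref{eq: definition lambda 1 kappa sphere} gives
$$
D^2G_0(u_M^0)[v,v]\ge(\lambda_1^\kappa+\mu_M)\|v\|_2^2>0 .
$$
The constants and the mean-zero subspace are orthogonal both in $\rL^2$ and for the form, since $\nabla 1=0$. So the form is block-diagonal: negative on a one-dimensional space, positive on its complement. Any subspace $E$ of dimension two on which the form is negative meets the mean-zero subspace nontrivially, a contradiction; hence the index is exactly one.

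\emph{Non-degeneracy.} Suppose $\int\kappa\nabla v\cdot\nabla\phi+\mu_M\int v\phi=0$ for all $\phi$. Testing with $\phi\equiv1$ gives $\int v=0$, since $\mu_M\ne0$. Testing with $\phi=v$ then gives $0\ge(\lambda_1^\kappa+\mu_M)\|v\|_2^2$, so $v=0$.
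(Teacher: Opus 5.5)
Your proposal is correct and follows the paper's proof. The constants are critical because $R'(\theta_i)=0$, the Hessian is $a_\kappa(v,v)+\mu_i\|v\|_2^2$, it is coercive for $i=S,W$, and at $u_M^0$ the constant/mean-zero splitting with $\lambda_1^\kappa+\mu_M>0$ gives both the index and the trivial kernel. You also supply two details the paper only asserts: the Taylor-remainder estimate that turns a coercive Hessian into strict local minimality, and the codimension-one argument showing that the Morse index is exactly one.
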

\begin{proof}
Assume for simplicity that $\mathfrak{e} =1$. Since $R'(\theta_i)=0$, we have
$$
\theta_i\abs{\theta_i}^3-Q_* \beta(\theta_i)-q=0.
$$
Because the functions $u_i^0\equiv\theta_i$ are constant, their gradients
vanish. Therefore each $u_i^0$ satisfies the stationary equation
$$
-\div_{\mathbb S^2} \left( \kappa\nabla_{\mathbb S^2}u_i^0\right)
+
u_i^0\abs{u_i^0}^3 - Q_*\beta(u_i^0) - q = 0.
$$
Thus $u_i^0$ is a critical point of $ G_{0}$. The second variation at $u_i^0$ is
$$
D^2  G_{0}(u_i^0)[v,v] = \int_{\mathbb S^2}\kappa(\omega)\abs{\nabla_{\S^2}v}^2\d \omega + \mu_i\int_{\mathbb S^2}v^2\d \omega.
$$
If $i=S$ or $i=W$, then $\mu_i>0$. Hence
$$
D^2 G_{0}(u_i^0)[v,v]
\geq c_i \| v \|_{\rH^1(\mathbb{S}^2)}^2
$$
for some $c_i>0$. Therefore $u_S^0$ and $u_W^0$ are strict local
minimisers.

It remains to analyse $u_M^0$. Decompose
$$
v=\bar v+v_0, \qquad \bar v:=\frac{1}{\abs{\mathbb S^2}}\int_{\mathbb S^2}v\d \omega, \qquad \int_{\mathbb S^2}v_0\d \omega=0.
$$
On the constant direction $v\equiv1$, the second variation is negative,
because $\mu_M<0$. On the other hand, if $\int_{\mathbb S^2}v\d \omega=0$,
then by the definition of $\lambda_1^\kappa$ in \eqref{eq: definition lambda 1 kappa sphere}, it holds
$$
\int_{\mathbb S^2}\kappa\abs{\nabla_{\mathbb S^2}v}^2\d \omega + \mu_M\int_{\mathbb S^2}v^2\d \omega
\geq (\lambda_1^\kappa+\mu_M) \int_{\mathbb S^2}v^2\d \omega.
$$
Since $-\lambda_1^\kappa<\mu_M$, the right-hand side is strictly positive for
any $v \neq 0$, with $\int_{\mathbb{S}^2} v \, d \omega  =0$. Thus the negative spectral subspace is one-dimensional and is generated by the constant functions. It follows that $u_M^0$ has Morse
index one. The same argument also shows that the kernel of the linearised
operator is trivial, so $u_M^0$ is non-degenerate.
\end{proof}
\noindent 
We now perturb the insolation. Let $Q_1\in \rC(\mathbb S^2)$, and for
$\varepsilon \in \mathbb{R}$ set
$$
Q_\varepsilon(\omega):=Q_*+\varepsilon Q_1(\omega).
$$
For $\abs{\varepsilon}$ sufficiently small, $Q_\varepsilon$ remains
strictly positive on $\mathbb S^2$. The implicit function theorem and continuity of the Hessians allow us to prove the
persistence of the three equilibria and their Morse indices.

\begin{thm}[Persistence of the three-climate structure]
\label{thm: three climate sphere}
Assume \autoref{ass:homogeneous-three-well}. Let
$Q_1\in \rC(\mathbb{S}^2)$, and consider
$
Q_\varepsilon( \omega ):=Q_*+\varepsilon Q_1(\omega).
$ Then there exists $\varepsilon_0>0$ such that, for every
$| \varepsilon |<\varepsilon_0$, the stationary EBM
$$
-\div_{\mathbb{S}^2} \left( \kappa( \omega ) \nabla_{\mathbb S^2}u\right) + \mathfrak{e}u|u|^3 - Q_\varepsilon ( \omega )\beta(u) - q = 0
$$
admits three solutions
$$
u_S^\varepsilon, \qquad u_M^\varepsilon, \qquad u_W^\varepsilon
$$
belonging to $D(A_\kappa) = H^2(\mathbb{S}^2)$.  Moreover, for each $i \in \{S,M,W\}$,
$$
u_i^\varepsilon \to \theta_i \qquad
\text{in }D(A_\kappa)
\quad\text{as }\varepsilon\to0.
$$
The equilibria $ u_S^\varepsilon$ and $u_W^\varepsilon $ are strict local
minimisers of the energy functional $G_\varepsilon$, while
$u_M^\varepsilon$ is non-degenerate and has Morse index one. In particular,
for every sufficiently small $| \varepsilon | $, the spherical EBM has two
stable variational equilibria and one unstable Morse-index-one transition
equilibrium.
\end{thm}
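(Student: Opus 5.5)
The proof follows the one-dimensional argument in \autoref{thm: existence three climate degenerate small eps}: an implicit function theorem in the operator domain, followed by spectral perturbation of the Hessians. Assume $\mathfrak{e}=1$. Define
$$
\mathcal F:\mathbb R\times \rH^2(\S^2)\to \rL^2(\S^2),\qquad \mathcal F(\varepsilon,u):=A_\kappa u+u|u|^3-Q_\varepsilon\beta(u)-q,
$$
using $D(A_\kappa)=\rH^2(\S^2)$ with equivalent norms from \autoref{rem: H2 = dom A_k sphere}. In two dimensions $\rH^2(\S^2)\hookrightarrow \rC(\S^2)\subset\rL^\infty(\S^2)$ by Sobolev embedding, so this embedding plays the role of \autoref{lem:domain-embeds-Linfty-degenerate}. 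Since $s\mapsto s|s|^3$ and $\beta$ are $\rC^1$ on bounded intervals, the Nemytskii operators are $\rC^1$ from $\rH^2$ into $\rL^2$. The map $\varepsilon\mapsto Q_\varepsilon=Q_*+\varepsilon Q_1$ is affine into $\rL^\infty$, hence $\mathcal F$ is $\rC^1$. Moreover $\mathcal F(0,\theta_i)=0$, and $D_u\mathcal F(0,\theta_i)=A_\kappa+\mu_i$.

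Next I would show that $A_\kappa+\mu_i:\rH^2\to\rL^2$ is an isomorphism. The operator $A_\kappa$ is self-adjoint and non-negative, with compact resolvent by Rellich--Kondrachov. Its spectrum is $\{0\}\cup[\lambda_1^\kappa,\infty)$, where the eigenvalue $0$ has the constants as eigenspace and $\lambda_1^\kappa$ is given by \eqref{eq: definition lambda 1 kappa sphere}. For $i=S,W$ we have $\mu_i>0$, and for $i=M$ the bound $-\lambda_1^\kappa<\mu_M<0$ holds. In every case $-\mu_i\notin\sigma(A_\kappa)$, so the resolvent maps $\rL^2$ boundedly onto $D(A_\kappa)$. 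The implicit function theorem then yields $\rC^1$ branches $\varepsilon\mapsto u_i^\varepsilon\in\rH^2$ with $u_i^0=\theta_i$, which gives the convergence in $D(A_\kappa)$. It also gives local uniqueness in a neighbourhood in $\rH^2$, which is needed later for the identification.

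For the variational character, the Hessian is
$$
D^2G_\varepsilon(u_i^\varepsilon)[v,v]=a_\kappa(v,v)+\int_{\S^2}a_i^\varepsilon v^2\d\omega,\qquad a_i^\varepsilon:=4|u_i^\varepsilon|^3-Q_\varepsilon\beta'(u_i^\varepsilon).
$$
From $u_i^\varepsilon\to\theta_i$ in $\rH^2\hookrightarrow\rL^\infty$ I expect $a_i^\varepsilon\to\mu_i$ in $\rL^\infty$. This step needs uniform continuity of $\beta'$ on a compact interval, and $\beta\in\rC^1_b$ supplies it. The perturbed Hessian is then a bounded form perturbation of $A_\kappa+\mu_i$ of size $\|a_i^\varepsilon-\mu_i\|_\infty$ in operator norm on $\rL^2$. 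Its eigenvalues, computed by min--max, therefore move by at most this amount.

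For $S$ and $W$, the lowest eigenvalue stays at least $\mu_i/2>0$. Interpolating with the gradient term and using $\kappa\ge k_*$ gives coercivity $D^2G_\varepsilon(u_i^\varepsilon)[v,v]\ge c\|v\|_{\rH^1}^2$. A second-order Taylor expansion then gives strict local minimality in $\rH^1(\S^2)$. I expect this to be the main obstacle, because $G_\varepsilon$ is only $\rC^2$ on $\rH^1(\S^2)$, and the remainder must be controlled with $\rH^1\hookrightarrow\rL^p$ for all $p<\infty$ together with local Lipschitz bounds on the second derivative of the quintic and $B$ terms. Here $\beta'$ is merely continuous, so I would control the $\beta$ term through uniform continuity and the $\rL^\infty$ bound on $u_i^\varepsilon$, accepting a remainder of size $o(\|v\|_{\rH^1}^2)$.

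For $M$, the unperturbed operator has exactly one eigenvalue, $\mu_M<0$, below $\lambda_1^\kappa+\mu_M>0$, and no kernel. Under a perturbation of norm smaller than $\tfrac12\min\{|\mu_M|,\lambda_1^\kappa+\mu_M\}$, min--max shows that the first eigenvalue remains negative and the second remains positive. Hence the Morse index equals one and the kernel is trivial, which is the claimed non-degeneracy.
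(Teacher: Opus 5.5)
Your proposal is correct and follows the paper's proof essentially step by step. Both arguments apply the implicit function theorem to $\mathcal F$ on $\mathbb R\times D(A_\kappa)$, using $D(A_\kappa)=\rH^2(\S^2)\hookrightarrow\rL^\infty$, and get invertibility of $A_\kappa+\mu_i$ from $-\mu_i\notin\sigma(A_\kappa)$. Both then use $\rL^\infty$-convergence $a_i^\varepsilon\to\mu_i$ together with min--max to obtain coercivity at $u_S^\varepsilon,u_W^\varepsilon$, and exactly one negative eigenvalue with trivial kernel at $u_M^\varepsilon$. Your Taylor-remainder argument only spells out the step from coercive Hessian to strict local minimality in $\rH^1(\S^2)$, which the paper asserts without detail.
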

\begin{proof}
Assume for simplicity that $\mathfrak{e} = 1$ and fix $i\in\{S,M,W\}$. Let $A_\kappa$ denote the $\rL^2(\mathbb S^2)$-realisation of
$
-\div_{\S^2} \left(\kappa\nabla_{\mathbb{S}^2}\cdot\right).
$
We recall that, see \autoref{rem: H2 = dom A_k sphere} for more details, $
D(A_\kappa)=\rH^2(\mathbb S^2),
$
with equivalent graph and $\rH^2$ norms. Define
$$
\mathcal{F}: \mathcal{U} \subset \mathbb{R} \times D(A_\kappa) \to \rL^2(\mathbb{S}^2), \quad \mathcal F(\varepsilon, u) := A_\kappa u + u|u|^3 - Q_\varepsilon(\omega) \beta(u) - q.
$$
where $\mathcal U$ is an open neighbourhood of
$(0,\theta_i)$, and $D(A_\kappa)$ is endowed with the graph norm. Since
$
D(A_\kappa)=\rH^2(\S^2) \hookrightarrow \rL^\infty(\mathbb S^2),
$
the non-linear terms are well-defined in $\rL^2(\mathbb S^2)$. Moreover,
because $Q_1\in \rC(\mathbb S^2)\subset \rL^\infty(\mathbb S^2)$ and
$\beta \in\rC_b^1(\mathbb R)$, the map $\mathcal F$ is of class
$C^1$ near $(0,\theta_i)$.

\textit{Step 1: Linearisation and invertibility of the operator.}
By the definition of $\theta_i$, namely $R'(\theta_i)=0$, we have
$$
\theta_i|\theta_i|^3-Q_* \beta(\theta_i)-q=0.
$$
Hence
$\mathcal{F}(0, \theta_i ) = 0. $ The derivative with respect to $u$ at $(0,\theta_i)$ is
$$
D_u\mathcal F(0,\theta_i)v
=
A_\kappa v+\mu_i v,
$$
where
$
\mu_i=4\theta_i^3-Q_* \beta'(\theta_i).
$ We claim that $A_\kappa+\mu_i:
D(A_\kappa)\to \rL^2(\mathbb S^2)$ is an isomorphism. Indeed, $A_\kappa$ is self-adjoint and non-negative on
$\rL^2(\mathbb S^2)$, with compact resolvent. Its kernel consists of the
constant functions, and its first positive eigenvalue is
$\lambda_1^\kappa$.

If $i=S$ or $i=W$, then $\mu_i>0$. Hence
$A_\kappa+\mu_i$ is strictly positive and therefore invertible. If
$i=M$, then $ -\lambda_1^\kappa<\mu_M<0.$ Thus $-\mu_M$ is not an eigenvalue of $A_\kappa$, and
$A_\kappa+\mu_M$ is invertible as an operator from $D(A_\kappa)$ to
$\rL^2(\mathbb S^2)$.

\textit{Step 2: implicit function theorem and second variation.}
The implicit function theorem gives, for each $i\in\{S,M,W\}$, a unique
$C^1$ branch
$$
\varepsilon \mapsto u_i^\varepsilon\in D(A_\kappa),
$$
defined for $\vert \varepsilon \vert $ sufficiently small, such that
$$
u_i^0\equiv\theta_i, \qquad
\mathcal{F}(\varepsilon, u_i^\varepsilon)=0.
$$
Therefore $u_i^\varepsilon$ solves the stationary equation
$$
-\div _{\mathbb{S}^2} (\kappa(\omega) \nabla_{\mathbb{S}^2} u_i^\varepsilon ) + u_i^\varepsilon |u_i^\varepsilon|^3 - Q_\varepsilon( \omega )\beta(u_i^\varepsilon) -q = 0.
$$
Moreover
$$
u_i^\varepsilon\to\theta_i \qquad \text{in }D(A_\kappa) \quad\text{as }\varepsilon\to0.
$$
Since $D(A_\kappa)=\rH^2(\mathbb S^2)$ with equivalent norms, this is
equivalent to convergence in $H^2(\mathbb S^2)$. In particular, by the
embedding $\rH^2(\mathbb S^2)\hookrightarrow \rC^0(\mathbb S^2)$, we also have
$
u_i^\varepsilon\to\theta_i$ in $\rC^0(\mathbb{S}^2).$

The second variation of $G_{\varepsilon}$ at
$u_i^\varepsilon$ is
$$
D^2  G_{\varepsilon}(u_i^\varepsilon)[v,v] =
\int_{\mathbb{S}^2} \kappa|\nabla_{\mathbb{S}^2}v|^2 \, d\omega + \int_{\mathbb{S}^2}
a_i^\varepsilon(\omega)v^2\,d\omega,
$$
where
$
a_i^\varepsilon(\omega) := 4| u_i^\varepsilon ( \omega)|^3 - Q_\varepsilon(\omega) \beta'\left( u_i^\varepsilon ( \omega ) \right).
$
Since $u_i^\varepsilon\to\theta_i$ in $\rC^0(\mathbb{S}^2)$ and 
$Q_\varepsilon\to Q_*$ in $\rL^\infty(\mathbb S^2)$, while
$\beta'$ is continuous and bounded, we have
$$
a_i^\varepsilon \to \mu_i \qquad \text{in }\rL^\infty(\mathbb{S}^2)
\quad\text{as }\varepsilon\to0.
$$
Therefore, by H\"older inequality we have, for every $v\in H^1(\mathbb S^2)$,
$$
\left| \int_{\mathbb{S}^2} \left( a_i^\varepsilon- \mu_i\right)v^2\d \omega \right| \leq \|a_i^\varepsilon-\mu_i\|_{\infty} \| v \|_{2}^2.
$$
Thus the perturbed second variations are small quadratic-form perturbations
of the homogeneous second variations.

\textit{Step 3: Persistence of the variational character.}
For $i= S , W$, the homogeneous second variation is coercive on
$\rH^1(\mathbb{S}^2)$ thanks to \autoref{ass:homogeneous-three-well}. Hence, thanks to second part of the previous step, after possibly reducing $\varepsilon_0$, there
exists $c_i>0$ such that
$$
D^2 G_{\varepsilon}(u_i^\varepsilon)[v,v] \geq c_i \| v \|_{\rH^1(\mathbb{S}^2)}^2 \qquad \forall v\in \rH^1(\S^2).
$$
Therefore $u_S^\varepsilon$ and $u_W^\varepsilon$ are strict local
minimisers of ${G}_{ \varepsilon}$.

For $i=M$, the homogeneous quadratic form has exactly one negative
direction and no kernel by \autoref{lem:homogeneous-equilibria-morse-sphere}. Since
$\|a_M^\varepsilon-\mu_M\|_\infty  \to 0$, the minimax
principle implies that the perturbed quadratic form also has exactly one
negative direction and no kernel for all sufficiently small
$\vert \varepsilon \vert  $. Hence $u_M^\varepsilon$ is non-degenerate and has Morse
index one.
\end{proof}

\begin{cor}[Mountain-pass critical point between the stable branches on $\mathbb S^2$]
\label{cor:MP point between min sphere}
Assume the hypotheses of \autoref{thm: three climate sphere}. Then, for every sufficiently small
$ \abs{\varepsilon} $, the functional $G_{\varepsilon}$ admits a
mountain-pass critical point $u_{MP}^\varepsilon\in \rH^1(\mathbb S^2)$ between the two strict local minimisers $u_S^\varepsilon$ and
$u_W^\varepsilon$.

More precisely, let $
\Gamma_\varepsilon := \left\lbrace 
 \gamma \in \rC([0,1]; \rH^1(\mathbb{S}^2)):
\gamma(0) = u_S^\varepsilon,\  \gamma(1) = u_W^\varepsilon
\right\rbrace ,$ and define the minimax level $
c_\varepsilon := \inf_{\gamma \in \Gamma_\varepsilon} \sup_{t\in[0,1]}
G_{  \varepsilon}(\gamma(t)).$ Then there exists $u_{MP}^\varepsilon\in H^1(\mathbb S^2)$ such that
$$
G_{\varepsilon}'(u_{MP}^\varepsilon)=0, \qquad  G_{\varepsilon}(u_{MP}^\varepsilon)=c_\varepsilon.
$$
Moreover $ c_\varepsilon> \max \left\lbrace  G_{\varepsilon}(u_S^\varepsilon), G_{  \varepsilon}(u_W^\varepsilon) \right\rbrace ,$ and therefore $u_{MP}^\varepsilon \notin \{ u_S^\varepsilon, u_W^\varepsilon\}.$
\end{cor}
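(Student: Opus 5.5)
The plan mirrors the proof of \autoref{cor:mountain-pass-between-branches-degenerate}: verify the hypotheses of \autoref{cor:two-local-minima-mpt} for $\Phi=G_\varepsilon$ on $X=\rH^1(\mathbb S^2)$, with $u_0=u_S^\varepsilon$ and $u_1=u_W^\varepsilon$. The Palais--Smale condition is supplied by \autoref{prop: Palais Smale sphere}, since $\kappa\in\rW^{1,\infty}\subset\rL^\infty$, $\kappa\ge k_*$ and $Q_\varepsilon\in\rC(\mathbb S^2)\subset\rL^\infty$. The distinctness of the two minimisers follows from \autoref{thm: three climate sphere}: $u_S^\varepsilon\to\theta_S$ and $u_W^\varepsilon\to\theta_W$ in $\rH^2(\mathbb S^2)\hookrightarrow\rH^1(\mathbb S^2)$ with $\theta_S<\theta_W$. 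In particular, for $|\varepsilon|$ small, $\|u_S^\varepsilon-u_W^\varepsilon\|_{\rH^1}\ge \tfrac12|\mathbb S^2|^{1/2}(\theta_W-\theta_S)=:d>0$.

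The remaining task, and the only genuinely quantitative step, is to produce the energy barriers on small spheres. For $i\in\{S,W\}$ I would Taylor expand
\[
G_\varepsilon(u_i^\varepsilon+v)=G_\varepsilon(u_i^\varepsilon)+\tfrac12 D^2G_\varepsilon(u_i^\varepsilon)[v,v]+\mathcal R_i(v).
\]
Here $G_\varepsilon'(u_i^\varepsilon)=0$, and \autoref{thm: three climate sphere} gives $D^2G_\varepsilon(u_i^\varepsilon)[v,v]\ge c_i\|v\|_{\rH^1}^2$. To control the remainder I would use $\mathcal R_i(v)=\int_0^1(1-s)\big(D^2G_\varepsilon(u_i^\varepsilon+sv)-D^2G_\varepsilon(u_i^\varepsilon)\big)[v,v]\,ds$. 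The Hessian difference is an integral of $\big(4|u+sv|^3-4|u|^3\big)v^2$ together with $Q_\varepsilon\big(\beta'(u)-\beta'(u+sv)\big)v^2$.

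The first contribution can be bounded using the uniform $\rL^\infty$ bound on $u_i^\varepsilon$ (from the $\rC^0$ convergence), Hölder's inequality, and the embedding $\rH^1(\mathbb S^2)\hookrightarrow\rL^p$. This gives a bound $\le C\|v\|_{\rH^1}^3(1+\|v\|_{\rH^1}^2)$.

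The second contribution is where I expect the main obstacle, because $\beta'$ is only continuous, not Lipschitz. My plan is to split the domain into $\{|v|\le\delta\}$ and $\{|v|>\delta\}$. On $\{|v|\le\delta\}$, uniform continuity of $\beta'$ on a compact interval gives a factor $\omega(\delta)\|v\|_2^2$. On $\{|v|>\delta\}$, I would use $2\|\beta'\|_\infty\|Q_\varepsilon\|_\infty\int_{|v|>\delta}v^2\le C\delta^{-2}\|v\|_4^4\le C\delta^{-2}\|v\|_{\rH^1}^4$. Choosing first $\delta$ small and then $\|v\|_{\rH^1}$ small yields $|\mathcal R_i(v)|=o(\|v\|_{\rH^1}^2)$. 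This also confirms that $G_\varepsilon\in\rC^2$ in the sense used earlier.

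With the remainder controlled, I would pick $r_S=r_W=r<d/2$ so small that $|\mathcal R_i(v)|\le \tfrac{c_i}{4}\|v\|^2$ for $\|v\|\le r$. This gives $G_\varepsilon\ge G_\varepsilon(u_i^\varepsilon)+\tfrac{c_i}{4}r^2=:G_\varepsilon(u_i^\varepsilon)+\alpha_i$ on $\partial B_r(u_i^\varepsilon)$, and the two balls are disjoint. \autoref{cor:two-local-minima-mpt} then delivers $c_\varepsilon>\max\{G_\varepsilon(u_S^\varepsilon),G_\varepsilon(u_W^\varepsilon)\}$ together with a critical point $u_{MP}^\varepsilon$ satisfying $G_\varepsilon(u_{MP}^\varepsilon)=c_\varepsilon$. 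Since the minimax level strictly exceeds both minimum values, $u_{MP}^\varepsilon\notin\{u_S^\varepsilon,u_W^\varepsilon\}$.
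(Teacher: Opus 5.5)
Your proposal is correct and follows the paper's route. Coercivity of the Hessian from \autoref{thm: three climate sphere} gives energy barriers on small disjoint spheres, \autoref{prop: Palais Smale sphere} supplies (PS), and \autoref{cor:two-local-minima-mpt} produces the critical point at level $c_\varepsilon$. Beyond the paper, you also control the second-order Taylor remainder explicitly, handling the merely continuous $\beta'$ by splitting into $\{|v|\le\delta\}$ and $\{|v|>\delta\}$; the paper simply attributes this step to the $\rC^2$ regularity of $G_\varepsilon$.
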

\begin{proof}
By \autoref{thm: three climate sphere}, the second variation at $u_S^\varepsilon$ and $u_W^\varepsilon$ is coercive. Hence, by the
$\rC^2$ regularity of the functional in neighbourhoods of these points, there
exist $r_S,r_W>0$ such that
$$
B_{r_S}(u_S^\varepsilon)\cap B_{r_W}(u_W^\varepsilon) = \emptyset,
$$
and constants $\alpha_S,\alpha_W>0$ such that
$$
G_{\varepsilon}(u) \geq  G_{\varepsilon}(u_S^\varepsilon)+\alpha_S \qquad
\text{for every }u\in\partial B_{r_S}(u_S^\varepsilon),
$$
and
$$
G_{\varepsilon}(u) \geq G_{\varepsilon}(u_W^\varepsilon)+\alpha_W \qquad \text{for every }u\in\partial B_{r_W}(u_W^\varepsilon).
$$
Every continuous path connecting $u_S^\varepsilon$ to $u_W^\varepsilon$ must cross
both boundaries $\partial B_{r_S}(u_S^\varepsilon)$ and
$\partial B_{r_W}(u_W^\varepsilon)$. Then it must hold
$$
c_\varepsilon> \max \left\lbrace  G_{\varepsilon}(u_S^\varepsilon), G_{\varepsilon}(u_W^\varepsilon)
\right\rbrace .
$$

For $\abs{\varepsilon}$ sufficiently small, $Q_\varepsilon$ is continuous and
strictly positive. Therefore $G_{\varepsilon}$ satisfies the
Palais-Smale condition by \autoref{prop: Palais Smale sphere}. The
Mountain-Pass Theorem, applied in its two-local-minima formulation, gives a
critical point $u_{MP}^\varepsilon\in \rH^1(\mathbb S^2)$ at the level
$c_\varepsilon$. The strict inequality above shows that
$u_{MP}^\varepsilon$ is distinct from the two local minimisers.
\end{proof}

\subsection{Identification of the spherical perturbative saddle with the mountain-pass point}
\label{subsec: three climate sphere}
\noindent
The perturbative middle branch and the mountain-pass point are constructed by different methods, so they do not automatically coincide. To identify them, we use, as in the one-dimensional case, the \autoref{ass:homogeneous-mountain-pass-isolation} on the homogeneous problem. It states the middle (homogeneous) equilibrium $u^0_M$attains the minimax level and is the only critical point at that level.

The proof of identification has then three steps: convergence of the minimax
levels, convergence of the corresponding mountain-pass points in $\rH^1(\mathbb{S}^2)$, and an upgrade to the operator-domain topology.
\begin{lem}[Continuity of the spherical mountain-pass level]
\label{lem: MP levele continuity}
Assume the hypotheses of \autoref{thm: three climate sphere}. Let $c_\varepsilon$ be the
mountain-pass level defined in \autoref{cor:MP point between min sphere}, and let $c_0$ be the homogeneous
mountain-pass level defined in \eqref{eq:main-results-unified-homogeneous-minimax-level}. Then
$$
c_\varepsilon \to c_0 \qquad \text{as } \varepsilon \to 0.
$$
\end{lem}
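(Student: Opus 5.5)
The proof follows the one-dimensional argument of \autoref{lem:mp-level-continuity-degenerate}, with $\rH^1(\S^2)$ in place of $\rH^1_\kappa(\mathcal I)$. Two inputs are used throughout. First, by \autoref{thm: three climate sphere}, $u_S^\varepsilon\to\theta_S$ and $u_W^\varepsilon\to\theta_W$ in $\rH^2(\S^2)$, hence in $\rH^1(\S^2)$. Second, the difference of the functionals is explicit:
$$
G_\varepsilon(u)-G_0(u)=-\varepsilon\int_{\S^2}Q_1B(u)\d\omega,
$$
so $|G_\varepsilon(u)-G_0(u)|\le |\varepsilon|\,\|Q_1\|_\infty\bigl(|B_0||\S^2|+|\S^2|^{1/2}\|u\|_2\bigr)$ by \eqref{eq: inequality B(u)}. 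This is $o(1)$ uniformly on $\rH^1$-bounded sets. We also need the homogeneous barrier $c_0>\max\{G_0(u_S^0),G_0(u_W^0)\}$, which follows from \autoref{lem:homogeneous-equilibria-morse-sphere} exactly as in \autoref{cor:MP point between min sphere}.

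\emph{Upper bound.} Fix $\eta>0$ and take $\gamma_0\in\Gamma_0$ with $\sup_t G_0(\gamma_0(t))\le c_0+\eta$. Build $\gamma_\varepsilon\in\Gamma_\varepsilon$ by concatenating three pieces: the segment from $u_S^\varepsilon$ to $u_S^0$, then $\gamma_0$, then the segment from $u_W^0$ to $u_W^\varepsilon$. The segments shrink to points in $\rH^1$, and $G_\varepsilon$ is continuous on $\rH^1(\S^2)$ (Sobolev embedding into $\rL^5$), with $G_\varepsilon-G_0\to0$ uniformly on bounded sets. Hence the energy on the segments is close to $G_0(u_S^0)$ and $G_0(u_W^0)$, both of which are $<c_0$. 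The set $\gamma_0([0,1])$ is compact, so it is bounded, and the uniform estimate gives $\sup_t G_\varepsilon(\gamma_0(t))\le c_0+\eta+o(1)$. Therefore $\limsup_{\varepsilon\to 0} c_\varepsilon\le c_0$.

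\emph{Lower bound.} Take $\varepsilon_n\to0$ and $\gamma_n\in\Gamma_{\varepsilon_n}$ with $\sup_tG_{\varepsilon_n}(\gamma_n(t))\le c_{\varepsilon_n}+1/n$. The upper bound makes these suprema bounded above uniformly in $n$. The main obstacle is an $\rH^1$ bound on $\gamma_n(t)$ that is uniform in both $n$ and $t$, since otherwise the perturbation estimate cannot be applied along the whole path. We get it as in \eqref{eq: spehere var problem bound integral}, using that $\|Q_{\varepsilon}\|_\infty$ is uniformly bounded for small $|\varepsilon|$:
$$
G_\varepsilon(u)\ge c_2\|u\|_5^5+\tfrac{k_*}{2}\|\nabla_{\S^2}u\|_2^2-C,
$$
with constants independent of $\varepsilon$. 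This bounds $\|\gamma_n(t)\|_5$ and $\|\nabla_{\S^2}\gamma_n(t)\|_2$. Bounding the mean by the $\rL^5$ norm and applying the Poincar\'e inequality on $\S^2$ then bounds $\|\gamma_n(t)\|_{\rH^1}\le R$. Consequently $\sup_t|G_{\varepsilon_n}(\gamma_n(t))-G_0(\gamma_n(t))|\to0$.

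To conclude, concatenate $\gamma_n$ with the short segments from $u_S^0$ to $u_S^{\varepsilon_n}$ and from $u_W^{\varepsilon_n}$ to $u_W^0$. This gives $\tilde\gamma_n\in\Gamma_0$ whose endpoint segments have energy below $c_0$ for large $n$. Then
$$
c_0\le\sup_t G_0(\tilde\gamma_n(t))\le \max\{c_{\varepsilon_n}+o(1),\,c_0-\delta\}.
$$
It follows that $c_0\le\liminf_n c_{\varepsilon_n}$. Since the sequence $\varepsilon_n\to0$ was arbitrary, $c_\varepsilon\to c_0$.
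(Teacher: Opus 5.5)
Your proposal is correct and follows the paper's proof essentially step by step. For the upper bound, you concatenate a near-optimal homogeneous path with short segments to $u_S^\varepsilon$ and $u_W^\varepsilon$ and use the explicit difference $G_\varepsilon-G_0=-\varepsilon\int_{\S^2}Q_1B(u)\d\omega$. For the lower bound, you use the $\varepsilon$-uniform coercive estimate to get an $\rH^1$ bound along the near-optimal paths $\gamma_n$, and conclude with the same concatenation and the same inequality $c_0\le\max\{c_{\varepsilon_n}+o(1),c_0-\delta\}$. The only difference is cosmetic: you control $\|\gamma_n(t)\|_2$ through the mean and the Poincar\'e inequality, whereas the paper uses $\rL^5(\S^2)\hookrightarrow\rL^2(\S^2)$ directly, which already suffices on its own.
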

\begin{proof}
We first prove that $\limsup_{\varepsilon\to0}c_\varepsilon\leq c_0.$ Let $\eta>0$. By the definition of $c_0$,
there exists $\gamma_0\in\Gamma_0$ such that
$$
\sup_{t\in[0,1]} G_{ 0}(\gamma_0(t)) \leq c_0 + \eta.
$$
Since, thanks to \autoref{thm: three climate sphere}, it holds
$$
u_S^\varepsilon\to u_S^0, \qquad u_W^\varepsilon\to u_W^0 \qquad
\text{in } \rH^1(\mathbb{S}^2),
$$
we can concatenate $\gamma_0$ with the short line segments joining
$u_S^\varepsilon$ to $u_S^0$ and $u_W^0$ to $u_W^\varepsilon$. This gives a
path $\gamma_\varepsilon \in \Gamma_\varepsilon$. Along the fixed path $\gamma_0$, we have
\begin{equation}
    G_{  \varepsilon} ( u ) -G_{0}(u) = -\varepsilon \int_{\mathbb{S}^2} Q_1( \omega ) B(u (\omega))\d\omega.
\label{eq: difference G eps 0 and non zero}
\end{equation}
Since $B$ has at most linear growth and $\gamma_0([0,1])$ is compact in
$\rH^1(\mathbb{S}^2)$, the right-hand side is $o(1)$ uniformly for
$u\in\gamma_0([0,1])$. Hence
$$
\sup_{t\in[0,1]} G_{\varepsilon}(\gamma_0(t)) \leq c_0 + \eta+o(1).
$$
On the two short endpoint segments, the energies converge to the corresponding
endpoint energies. Since $u_S^0$ and $u_W^0$ are strict local minimisers and
$$
c_0> \max\{G_{0}(u_S^0), G_{0}(u_W^0)\},
$$
the maximum energy on these short segments is strictly below $c_0+\eta$ for
$\abs{\varepsilon}$ sufficiently small. Therefore
$$
\limsup_{\varepsilon \to 0} c_\varepsilon \leq c_0 + \eta.
$$
Letting $\eta\downarrow0$, we obtain $
\limsup_{\varepsilon\to0}c_\varepsilon\leq c_0.$

We now prove that $\liminf_{\varepsilon\to0}c_\varepsilon\geq c_0.$ Let $\varepsilon_n \to 0$, and choose
$\gamma_n \in \Gamma_{\varepsilon_n}$ such that
$$
\sup_{t \in [0,1]} G_{ \varepsilon_n} (\gamma_n(t)) \leq  c_{\varepsilon_n} + \frac{1}{n}.
$$
The upper bound just proved implies that $(c_{\varepsilon_n})_n$ is bounded
from above. Hence
$$
\sup_{t\in[0,1]} G_{ \varepsilon_n }(\gamma_n(t)) \leq C
$$
for some constant $C>0$ independent of $n$.

We claim that the paths $\gamma_n$ are uniformly bounded in
$H^1(\mathbb S^2)$. Indeed, since
$Q_{\varepsilon_n}=Q_*+\varepsilon_nQ_1$ and $Q_1\in \rC(\mathbb{S}^2)$, the
quantities $\| Q_{\varepsilon_n} \|_{\infty }$ are uniformly
bounded for $n$ large. Using the growth bound
$\vert B(s) \vert \leq |B_0|+|s|$, there exist constants $c,C>0$, independent of
$n$, such that
$$
G_{  \varepsilon_n}(u) \geq \frac{1}{2} \int_{ \mathbb{S}^2} \kappa \vert  \nabla_{ \mathbb{S}^2} u \vert ^2\d \omega + c \| u \|_{5}^5 - C
$$
for every $u \in \rH^1(\mathbb{S}^2)$. Applying this to
$ u = \gamma_n(t)$ gives uniform bounds on
$$
\int_{\mathbb{S}^2}  \kappa| \nabla_{\mathbb{S}^2} \gamma_n(t)|^2 \d \omega
\qquad \text{and} \qquad \| \gamma_n(t) \|_{5}.
$$
Since $\kappa \geq k_*>0$ and
$L^5(\mathbb{S}^2) \hookrightarrow L^2(\mathbb{S}^2)$, there exists
$R>0$ such that
$$
\|\gamma_n(t)\|_{H^1(\mathbb{S}^2)} \leq R,
$$
for every $t \in [0,1]$ and every $n$. Further, on the ball $\{u\in \rH^1(\mathbb S^2):\|u\|_{\rH^1}\leq R\}$, it holds that, thanks to \eqref{eq: difference G eps 0 and non zero} and H\"older inequality
$$
\left|  G_{\varepsilon_n}(u)- G_{0}(u) \right| \leq |\varepsilon_n| \|Q_1\|_{\infty} \int_{\mathbb{S}^2}|B(u)|\d \omega
\leq C_R| \varepsilon_n|.
$$
Consequently, as $n \to +\infty$, we have
$$
\sup_{t\in[0,1]} \left| G_{  \varepsilon_n}(\gamma_n(t)) - G_{0}(\gamma_n(t)) \right| \to 0.
$$

Now concatenate $\gamma_n$ with the short line segments joining
$u_S^0$ to $u_S^{\varepsilon_n}$ and
$u_W^{\varepsilon_n}$ to $u_W^0$. This produces a path
$\tilde{\gamma}_n\in \Gamma_0 $. The contribution of the short endpoint segments,
computed with the homogeneous functional $ G_{0}$, has maximum
strictly below $c_0$ for $n$ sufficiently large, because the endpoints converge
to the strict local minimisers $u_S^0$ and $u_W^0$. Therefore
$$
c_0 \leq \sup_{t\in[0,1]} G_{0}(\tilde\gamma_n(t)) \leq
\max\left\{ c_{\varepsilon_n}+o(1),
c_0-\delta \right\}
$$
for some $\delta>0$ and all sufficiently large $n$. It follows that
$
c_0\leq c_{\varepsilon_n}+o(1).
$
Thus
$
c_0 \leq   \liminf_{n\to\infty}c_{\varepsilon_n}.
$
Since the sequence $\varepsilon_n \to0$ is arbitrary, we conclude that
$
\liminf_{\varepsilon\to0} c_\varepsilon \geq c_0.
$
\end{proof}
\noindent
We next use the continuity of the minimax levels and the Palais-Smale
condition to identify the possible limits of the perturbed mountain-pass
points.
\begin{lem}[Convergence of spherical mountain-pass critical points]
\label{lem:mp-points-converge-sphere}
Assume the hypotheses of \autoref{thm: three climate sphere} and \autoref{ass:homogeneous-mountain-pass-isolation}. Let
$\varepsilon_n\to0$, and let
$u_n:=u_{MP}^{\varepsilon_n}$ be mountain-pass critical points satisfying
$$
G _{ \varepsilon_n}'(u_n)=0, \qquad G_{ \varepsilon_n}(u_n) = c_{\varepsilon_n}.
$$
Then, up to a subsequence,
$$
u_n \to u_M^0 \qquad \text{strongly in }\rH^1(\mathbb S^2).
$$
\end{lem}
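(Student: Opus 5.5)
The plan mirrors the one-dimensional argument of \autoref{lem:mp-points-homogeneous-ps-degenerate}: show that $(u_n)_n$ is a Palais-Smale sequence for the homogeneous functional $G_0$ at level $c_0$, extract a strongly convergent subsequence, and then use \autoref{ass:homogeneous-mountain-pass-isolation} to identify the limit.

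\emph{Energy levels and uniform bounds.} By \autoref{lem: MP levele continuity}, $c_{\varepsilon_n}\to c_0$, so $G_{\varepsilon_n}(u_n)=c_{\varepsilon_n}$ is bounded. For $n$ large, $\|Q_{\varepsilon_n}\|_\infty$ is bounded uniformly in $n$. The lower bound used in the proof of \autoref{lem: MP levele continuity},
$$G_{\varepsilon_n}(u)\geq \tfrac12\int_{\S^2}\kappa|\nabla_{\S^2}u|^2\d\omega + c\|u\|_5^5 - C,$$
then holds with $c,C$ independent of $n$. Combined with $\kappa\geq k_*$ and $\rL^5(\S^2)\hookrightarrow \rL^2(\S^2)$, it gives $\|u_n\|_{\rH^1(\S^2)}\leq R$ for all $n$.

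\emph{Transfer to $G_0$.} From \eqref{eq: difference G eps 0 and non zero} and $|B(s)|\leq |B_0|+|s|$,
$$|G_0(u_n)-G_{\varepsilon_n}(u_n)|\leq |\varepsilon_n|\,\|Q_1\|_\infty \int_{\S^2}|B(u_n)|\d\omega\leq C_R|\varepsilon_n|,$$
so $G_0(u_n)\to c_0$. For the derivatives, since $G_{\varepsilon_n}'(u_n)=0$,
$$\langle G_0'(u_n),v\rangle=\varepsilon_n\int_{\S^2}Q_1\beta(u_n)v\d\omega .$$
Using $0\leq\beta\leq 1$, this gives $\|G_0'(u_n)\|_{(\rH^1)^*}\leq |\varepsilon_n|\,\|Q_1\|_\infty|\S^2|^{1/2}\to 0$. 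Hence $(u_n)_n$ is a Palais-Smale sequence for $G_0$.

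\emph{Compactness and identification.} By \autoref{prop: Palais Smale sphere} applied with $Q_*$ constant, a subsequence satisfies $u_n\to u_*$ strongly in $\rH^1(\S^2)$. Since $G_0\in \rC^1$, continuity gives $G_0'(u_*)=0$ and $G_0(u_*)=c_0$. The level-isolation condition \eqref{eq:main-results-unified-level-isolation} then forces $u_*=u_M^0$, which proves the claim.

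The main point requiring care is the uniformity of the coercivity constants in $n$, which is exactly what yields the $\rH^1$ bound. This follows as in \autoref{lem: MP levele continuity}, since $Q_{\varepsilon_n}$ is uniformly bounded. The rest is routine.
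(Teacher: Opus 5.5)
Your proposal is correct and follows essentially the same route as the paper. You obtain the uniform $\rH^1$ bound from the $n$-independent coercive estimate, show via the $O(|\varepsilon_n|)$ bounds on energy and derivative that $(u_n)_n$ is a Palais--Smale sequence for $G_0$ at level $c_0$, and then conclude by the Palais--Smale condition on $\mathbb{S}^2$ and the level-isolation assumption.
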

\begin{proof}
By \autoref{lem: MP levele continuity}, we have $
c_{\varepsilon_n}\to c_0.$ Since $
G_{\varepsilon_n}(u_n)=c_{\varepsilon_n},
$ the energy levels $G_{\varepsilon_n}(u_n)$ are bounded. The uniform
coercive estimate used in the proof of \autoref{lem: MP levele continuity} therefore gives that $(u_n)_n$ is bounded
in $\rH^1(\mathbb S^2)$.

We first show that $(u_n)_n$ is a Palais-Smale sequence for the homogeneous
functional $G_{0}$ at level $c_0$. Since
$$
G_{0}( u_n )-G_{  \varepsilon_n}( u_n) = \varepsilon_n \int_{\mathbb{ S}^2} Q_1(\omega) B(u_n(\omega)) \d \omega,
$$
and since $B$ has at most linear growth while $(u_n)_n$ is bounded in
$\rH^1(\mathbb{S}^2)$, we get
$
G_{ 0}( u_n)-G_{ \varepsilon_n} (u_n) \to 0.
$ Therefore $ G_{0}( u_n ) \to c_0.
$ It remains to check the derivative. For every $v\in \rH^1(\mathbb S^2)$,
$$
\left \langle G_{  0}'( u_n)  - G_{ \varepsilon_n}'(u_n) , v \right \rangle
= \varepsilon_n \int_{\mathbb{S}^2} Q_1(\omega) \beta(u_n)v\d \omega.
$$
Since $\beta$ is bounded and $Q_1\in \rL^\infty( \mathbb{S}^2 )$, we have
$$
\left| \left \langle G_{0}'(u_n) -  G_{\varepsilon_n}'(u_n), v \right \rangle \right|
\leq C | \varepsilon_n| \| v \|_{2} \leq C| \varepsilon_n |\| v \|_{\rH^1(\mathbb{S}^2)}.
$$
Since $G_{  \varepsilon_n}'(u_n)=0$, it follows that
$$
G_{0}'( u_n) \to 0 \qquad \text{in }(\rH^1(\mathbb S^2))^*.
$$
Thus $(u_n)_n$ is a Palais-Smale sequence for $G_{0}$ for the level $c_0$.

By the Palais-Smale condition for $ G_{0}$ on $\rH^1(\mathbb S^2)$, after passing to a subsequence there exists $u_* \in \rH^1 (\mathbb{S}^2)$ such
that
$
u_n\to u_*$ in $\rH^1(\mathbb S^2)$. Since $G_{  0 }$ is $\rC^1$ on $\rH^1(\mathbb S^2)$, we obtain
$$
G_{0}'(u_*)=0, \qquad G_{ 0}( u_*) = c_0.
$$
By \autoref{ass:homogeneous-mountain-pass-isolation}, the only critical point of $ G_{0}$ at level $c_0$ is $u_M^0$. Hence $ u_*=u_M^0. $ In conclusion $ u_n\to u_M^0 $
in $\rH^1(\mathbb S^2).$
\end{proof}
\noindent
To apply the local uniqueness supplied by the implicit function theorem, we
upgrade the convergence from $\rH^1(\mathbb{S}^2)$ to $D(A_\kappa)$.
\begin{lem}[Upgrade of critical-point convergence on $\mathbb S^2$]
\label{lem:critical-convergence-upgrade-sphere}
Let $\kappa\in \rW^{1,\infty}(\mathbb S^2)$, with
$\kappa\ge k_*>0$. Assume that $\varepsilon_n \to 0$, and let $u_n$ be critical points of
$G_{\varepsilon_n}$. Assume that
$$
u_n\to u_M^0 \qquad \text{in } \rH^1(\mathbb{S}^2).
$$
Then $
u_n\to u_M^0$ in $ D( A_\kappa) = \rH^2(\mathbb{S}^2).$
\end{lem}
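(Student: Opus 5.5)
The plan follows the one-dimensional argument of \autoref{lem:critical-convergence-upgrade-degenerate}, with elliptic $\rH^2$ regularity on $\S^2$ (\autoref{rem: H2 = dom A_k sphere}) replacing the weighted domain bound. The goal is to show that $A_\kappa u_n \to A_\kappa u_M^0$ in $\rL^2(\S^2)$. Since $u_n \to u_M^0$ in $\rL^2$ as well, this gives convergence in the graph norm, which is equivalent to the $\rH^2(\S^2)$ norm.

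\emph{Step 1: uniform $\rL^\infty$ bound.} Take $\mathfrak e=1$ and choose $M>0$ with
$$
M^4>\sup_n\|Q_{\varepsilon_n}\|_\infty+|q|,
$$
which is possible since $Q_{\varepsilon_n}=Q_*+\varepsilon_nQ_1$ is uniformly bounded. Test the weak Euler--Lagrange equation, as written in \eqref{eq:EL-derivative-sphere}, with $(u_n-M)^+$ and with $-(u_n+M)^-$. Both truncations lie in $\rH^1(\S^2)$ by the Lipschitz chain rule. Each test yields a sum of two non-negative terms equal to zero: the gradient term $\int\kappa|\nabla(u_n\mp M)^\pm|^2$, and a reaction term of definite sign. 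Hence $|u_n|\le M$ a.e., uniformly in $n$.

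\emph{Step 2: $u_n\in D(A_\kappa)$ and the equation in $\rL^2$.} By Step 1 the function $f_n:=Q_{\varepsilon_n}\beta(u_n)+q-u_n|u_n|^3$ is bounded in $\rL^\infty$. By the definition of the operator associated with the form, $u_n\in D(A_\kappa)$ and $A_\kappa u_n=f_n$ in $\rL^2$. (Alternatively, one can argue as in Step 4 of \autoref{prop:minimiser-sphere-regularity}, via $\rH^1\hookrightarrow \rL^8$.) Similarly,
$$
A_\kappa u_M^0=Q_*\beta(\theta_M)+q-\theta_M|\theta_M|^3=0.
$$

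\emph{Step 3: convergence of the right-hand sides.} The convergence $u_n\to u_M^0$ in $\rH^1$ gives convergence in $\rL^2$. On $[-M,M]$ the maps $s\mapsto s|s|^3$ and $\beta$ are Lipschitz, so
$$
\|u_n|u_n|^3-u_M^0|u_M^0|^3\|_2\le 4M^3\|u_n-u_M^0\|_2\to0,
$$
and
$$
\|Q_{\varepsilon_n}\beta(u_n)-Q_*\beta(u_M^0)\|_2\le|\varepsilon_n|\,\|Q_1\|_\infty|\S^2|^{1/2}+Q_*\|\beta'\|_\infty\|u_n-u_M^0\|_2\to0.
$$
Therefore $A_\kappa u_n\to A_\kappa u_M^0$ in $\rL^2$. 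Combined with the $\rL^2$ convergence, this gives convergence in the graph norm, and by \autoref{rem: H2 = dom A_k sphere} convergence in $\rH^2(\S^2)$.

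The only point needing care is Step 1. The truncation tests must be legitimate in $\rH^1(\S^2)$, and the reaction term must have the correct sign even though $q$ may be negative and $u_n$ is not assumed non-negative. The choice of $M$ above handles both signs, so I expect this step to go through as in the interval case. If it did not, I would fall back on $\rH^1(\S^2)\hookrightarrow\rL^p$ for all $p<\infty$. Together with the $\rH^1$ convergence, this gives $u_n|u_n|^3\to u_M^0|u_M^0|^3$ in $\rL^2$ directly via H\"older's inequality and the $\rL^8$ bound, with no $\rL^\infty$ estimate needed.
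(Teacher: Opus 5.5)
Your argument is correct, and its skeleton is the paper's. You write the critical-point equation as $A_\kappa u_n = Q_{\varepsilon_n}\beta(u_n)+q-u_n|u_n|^3$ in $\rL^2(\S^2)$. You then show that the right-hand sides converge in $\rL^2$ to $A_\kappa u_M^0$, and pass from the graph norm to $\rH^2(\S^2)$ via \autoref{rem: H2 = dom A_k sphere}.

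You differ in how you control the nonlinear term $u_n|u_n|^3$. The paper proves no $\rL^\infty$ bound on the sphere. It uses the two-dimensional embedding $\rH^1(\S^2)\hookrightarrow \rL^8(\S^2)$, so $u_n\to u_M^0$ in $\rL^8$ and hence $u_n|u_n|^3\to u_M^0|u_M^0|^3$ in $\rL^2$. This is exactly the fallback in your last paragraph. The fact that $u_n\in D(A_\kappa)$ rests on the same embedding, as in Step 4 of \autoref{prop:minimiser-sphere-regularity}.

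Your main route instead transplants the truncation argument of \autoref{lem:critical-convergence-upgrade-degenerate}: you get $\|u_n\|_\infty\le M$ uniformly and then use Lipschitz bounds on $[-M,M]$. This is one step longer. In exchange, it relies only on the sign structure of the reaction term rather than on a subcritical Sobolev embedding. It also makes both the membership in $D(A_\kappa)$ and the $\rL^2$ estimates fully explicit. Your global Lipschitz bound for $\beta$ is more direct than the paper's appeal to dominated convergence. The paper's route is shorter precisely because $\S^2$ is two-dimensional.

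One detail you leave implicit: the estimate with constant $4M^3$ also needs $|\theta_M|\le M$. This follows because $|u_n|\le M$ a.e.\ and $u_n\to\theta_M$ in $\rL^2$, so a subsequence converges a.e.\ to $\theta_M$.
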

\begin{proof}
Assume for simplicity that $\mathfrak{e} = 1$. Since $u_n$ is a critical point of $G_{\varepsilon_n}$, it satisfies
$$
A_\kappa u_n = Q_{ \varepsilon_n } \beta ( u_n ) + q - u_n |u_n|^3 \qquad \text{in }\rL^2(\mathbb{S}^2).
$$
The homogeneous equilibrium $u_M^0$ satisfies
$$
A_\kappa u_M^0 = Q_* \beta (u_M^0)+q-u_M^0|u_M^0|^3 \qquad \text{in }\rL^2(\mathbb{S}^2).
$$
We show that the right-hand sides converge strongly in $\rL^2(\mathbb{S}^2)$.

First, since $Q_{\varepsilon_n}\to Q_*$ in $\rL^\infty(\S^2)$,
$\beta$ is bounded and continuous, and $u_n\to u_M^0$ strongly in
$\rH^1(\mathbb S^2)$, we have
$$
Q_{\varepsilon_n}\beta (u_n) \to
Q_* \beta(u_M^0) \qquad \text{in }\rL^2(\mathbb{S}^2).
$$
Indeed, the term involving $Q_{\varepsilon_n} - Q_*$ is controlled by $\| Q_{\varepsilon_n}-Q_*\|_{\infty}$ and the boundedness of
$\beta$, while the term
$\beta(u_n)-\beta(u_M^0)$ converges to zero in $\rL^2$ by dominated
convergence.

Second, since $\rH^1(\mathbb{S}^2)\hookrightarrow \rL^p(\mathbb{S}^2)$ for any $p < \infty$, we have
$
u_n\to u_M^0
$
in $\rL^8(\mathbb{S}^2)$, which implies
$
u_n|u_n|^3 \to u_M^0|u_M^0|^3 $ in $ \rL^2(\mathbb{S}^2).$ Consequently $
A_\kappa u_n \to A_\kappa u_M^0$ in $\rL^2(\S^2).$ Together with $u_n\to u_M^0$ in $\rL^2(\mathbb S^2)$, this gives
$$
\|u_n-u_M^0\|_{D(A_\kappa)} = \|u_n-u_M^0\|_{L^2(\mathbb S^2)} + \|A_\kappa(u_n-u_M^0)\|_{L^2(\mathbb S^2)} \to 0.
$$
Thus $u_n\to u_M^0$ strongly in $D(A_\kappa)$. Since
$D(A_\kappa)=\rH^2(\mathbb S^2)$, see \autoref{rem: H2 = dom A_k sphere}, this concludes the proof.
\end{proof}
\noindent
The preceding lemmas now allow us to identify the mountain-pass point with
the perturbative middle branch.
\begin{prop}[Identification of the perturbative saddle with the mountain-pass saddle on $\mathbb S^2$]
\label{prop:identification-mp-morse-sphere}
Assume the hypotheses of \autoref{thm: three climate sphere} and \autoref{ass:homogeneous-mountain-pass-isolation}. Then,
for every
sufficiently small $|\varepsilon|$, it holds
$$
u_{MP}^\varepsilon=u_M^\varepsilon,
$$
where $u_{MP}^\varepsilon$ is the mountain-pass point from \autoref{cor:MP point between min sphere} and $u_M^\varepsilon$ is the Morse-index one equilibrium from \autoref{thm: three climate sphere}.
\end{prop}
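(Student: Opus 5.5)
The plan mirrors the one-dimensional argument of \autoref{prop:identification-mp-morse-degenerate} and argues by contradiction. Suppose the claim fails. Then there are $\varepsilon_n\to0$ and mountain-pass critical points $u_n:=u_{MP}^{\varepsilon_n}$ of $G_{\varepsilon_n}$ with
$$
G_{\varepsilon_n}(u_n)=c_{\varepsilon_n}, \qquad u_n\neq u_M^{\varepsilon_n} \quad \text{for every } n.
$$
All these points exist for $n$ large by \autoref{cor:MP point between min sphere}.

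First, \autoref{lem:mp-points-converge-sphere} applies under \autoref{ass:homogeneous-mountain-pass-isolation}. It gives, after passing to a subsequence, $u_n\to u_M^0$ strongly in $\rH^1(\S^2)$. That lemma already contains the two ingredients doing the work here: continuity of the levels from \autoref{lem: MP levele continuity}, and the Palais-Smale condition for $G_0$ from \autoref{prop: Palais Smale sphere}. Together they make the limit a critical point of $G_0$ at level $c_0$, which the isolation assumption identifies with $u_M^0$.

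Next, each $u_n$ is a critical point of $G_{\varepsilon_n}$. So \autoref{lem:critical-convergence-upgrade-sphere} upgrades the convergence to $u_n\to u_M^0$ in $D(A_\kappa)=\rH^2(\S^2)$, with graph norm equivalent to the $\rH^2$ norm by \autoref{rem: H2 = dom A_k sphere}.

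Then I will invoke the local uniqueness from the implicit function theorem in the proof of \autoref{thm: three climate sphere}. The map $\mathcal F(\varepsilon,u)=A_\kappa u+u|u|^3-Q_\varepsilon\beta(u)-q$ is $\rC^1$ near $(0,\theta_M)$, and $D_u\mathcal F(0,\theta_M)=A_\kappa+\mu_M$ is an isomorphism $D(A_\kappa)\to\rL^2(\S^2)$. Hence there exist $\varepsilon_1>0$ and a neighbourhood $\mathcal U$ of $u_M^0$ in $D(A_\kappa)$ such that, for $|\varepsilon|<\varepsilon_1$, the only zero of $\mathcal F(\varepsilon,\cdot)$ in $\mathcal U$ is $u_M^\varepsilon$.

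The step needing care is checking that $u_n$ really is a zero of $\mathcal F(\varepsilon_n,\cdot)$, i.e.\ that a weak $\rH^1$ critical point lies in $D(A_\kappa)$ and satisfies the strong equation. This follows as in Step 4 of \autoref{prop:minimiser-sphere-regularity}. The right-hand side $Q_{\varepsilon_n}\beta(u_n)+q-u_n|u_n|^3$ lies in $\rL^2$, because $\rH^1(\S^2)\hookrightarrow\rL^8(\S^2)$, so $u_n\in D(A_\kappa)$ and $\mathcal F(\varepsilon_n,u_n)=0$. This is in fact already implicit in \autoref{lem:critical-convergence-upgrade-sphere}.

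For $n$ large we then have $|\varepsilon_n|<\varepsilon_1$ and $u_n\in\mathcal U$, which forces $u_n=u_M^{\varepsilon_n}$ and contradicts the choice of the sequence. The contradiction shows there is $\varepsilon_0>0$ such that for $|\varepsilon|<\varepsilon_0$ every mountain-pass critical point at level $c_\varepsilon$ equals $u_M^\varepsilon$.

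Given the preceding lemmas, I expect no real obstacle. The main point is to phrase the contradiction over all choices of mountain-pass points, since these need not be unique a priori, and to make sure the topology used for uniqueness (the graph norm of $D(A_\kappa)$) matches the topology of convergence obtained.
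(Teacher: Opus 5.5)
Your proposal is correct and follows essentially the same route as the paper. Both argue by contradiction along $\varepsilon_n\to0$, obtain $\rH^1$ convergence to $u_M^0$ via \autoref{lem:mp-points-converge-sphere}, upgrade it to $D(A_\kappa)$ via \autoref{lem:critical-convergence-upgrade-sphere}, and conclude with the local uniqueness from the implicit function theorem; your explicit check that each $u_n$ lies in $D(A_\kappa)$ and is a zero of $\mathcal F(\varepsilon_n,\cdot)$ is a detail the paper leaves implicit, and you handle it correctly.
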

\begin{proof}
We argue by contradiction. Suppose that there exist $\varepsilon_n\to0$ and
mountain-pass critical points $u_{MP}^{\varepsilon_n}$ such that $
u_{MP}^{\varepsilon_n}\neq u_M^{\varepsilon_n} $
for every $n$. Set $u_n:=u_{MP}^{\varepsilon_n}. $
Then
$$
 G_{\varepsilon_n}'(u_n)=0,
\qquad  G_{\varepsilon_n}(u_n)=c_{\varepsilon_n}.
$$
By \autoref{lem:mp-points-converge-sphere}, after passing to a subsequence,
we have
$
u_n\to u_M^0$ in $\rH^1(\S^2).$ Since each $u_n$ is a critical point of $ G_{\varepsilon_n}$, \autoref{lem:critical-convergence-upgrade-sphere} gives the stronger convergence
$$
u_n \to u_M^0 \qquad \text{strongly in }D(A_\kappa).
$$
On the other hand, by the implicit function theorem applied in the proof of \autoref{thm: three climate sphere} also guarantees that there exist a
neighbourhood $\mathcal{U}$ of $u_M^0$ in $D(A_\kappa)$ and a number
$\varepsilon_1 > 0$ such that, for every $\abs{\varepsilon} < \varepsilon_1$, the
only critical point of $G_{\varepsilon}$ in $\mathcal U$ is $u_M^\varepsilon$. Since $u_n \to u_M^0$ in $ D(A_\kappa)$, we have
$u_n\in\mathcal{U}$ for all sufficiently large $n$. Therefore
$$
u_{MP}^{\varepsilon_n} = u_M^{\varepsilon_n}
$$
for all sufficiently large $n$, contradicting the choice of the sequence.
\end{proof}

\begin{rem}
\autoref{ass:homogeneous-mountain-pass-isolation} is a global
assumption. The Morse-index-one property of $u_M^0$ is local and does not, by
itself, imply that $u_M^0$ is the unique critical point at the homogeneous
mountain-pass level. Thus the identification
$u_{MP}^\varepsilon=u_M^\varepsilon$ requires this additional minimax
isolation input.
\end{rem}

\begin{cor}[Perturbative tri-stability on $\mathbb{S}^2$]
Under the assumptions of \autoref{thm: three climate sphere}, for every sufficiently small
$\abs{\varepsilon}$ the functional $ G_{\varepsilon}$ has at least
three distinct critical points in $\rH^1(\mathbb S^2)$. In particular, it has
two strict local minimisers $
u_S^\varepsilon, u_W^\varepsilon,$ and one non-degenerate
Morse-index-one critical point
$
u_M^\varepsilon.
$
In addition, by the
Palais-Smale condition and the Mountain-Pass Theorem, there exists a
mountain-pass critical point
$
u_{MP}^\varepsilon \notin \{ u_S^\varepsilon, u_W^\varepsilon \}
$
If \autoref{ass:homogeneous-mountain-pass-isolation} also holds, then, for all
sufficiently small $\abs{\varepsilon}$,
$
u_{MP}^\varepsilon=u_M^\varepsilon.
$
\end{cor}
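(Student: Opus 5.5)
The corollary is an assembly of results already proved in this section, so the proof will cite them in order and check only that the smallness thresholds in $\varepsilon$ are compatible.

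\emph{Step 1: the three critical points.} By \autoref{thm: three climate sphere} there is $\varepsilon_0>0$ such that for $|\varepsilon|<\varepsilon_0$ the branches $u_S^\varepsilon,u_M^\varepsilon,u_W^\varepsilon\in \rH^2(\mathbb S^2)\subset \rH^1(\mathbb S^2)$ solve the stationary equation. Since they lie in $D(A_\kappa)$, the equation holds in $\rL^2$, and testing against $\phi\in\rH^1(\mathbb S^2)$ gives $G_\varepsilon'(u_i^\varepsilon)=0$. The same theorem says that $u_S^\varepsilon,u_W^\varepsilon$ are strict local minimisers and that $u_M^\varepsilon$ is non-degenerate with Morse index one.

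\emph{Step 2: distinctness.} The branches converge to $\theta_S<\theta_M<\theta_W$ in $\rH^2(\mathbb S^2)\hookrightarrow \rC^0(\mathbb S^2)$. So after shrinking $\varepsilon_0$ we can take $\|u_i^\varepsilon-\theta_i\|_\infty<\tfrac12\min\{\theta_M-\theta_S,\theta_W-\theta_M\}$, which makes the three functions pairwise distinct. One could also argue via Morse indices for $u_M^\varepsilon$ against the minimisers, but the sup-norm separation is simpler and also separates $u_S^\varepsilon$ from $u_W^\varepsilon$.

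\emph{Step 3: the mountain-pass point.} \autoref{cor:MP point between min sphere} gives $u_{MP}^\varepsilon$ at level $c_\varepsilon>\max\{G_\varepsilon(u_S^\varepsilon),G_\varepsilon(u_W^\varepsilon)\}$, so $u_{MP}^\varepsilon\notin\{u_S^\varepsilon,u_W^\varepsilon\}$. That corollary rests on \autoref{prop: Palais Smale sphere}. The Palais--Smale proposition needs $Q_\varepsilon\in\rL^\infty$, which holds since $Q_1\in\rC(\mathbb S^2)$. The corollary additionally uses $Q_\varepsilon>0$ for small $|\varepsilon|$, which holds because $Q_*>0$.

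\emph{Step 4: identification.} Under \autoref{ass:homogeneous-mountain-pass-isolation}, \autoref{prop:identification-mp-morse-sphere} yields $u_{MP}^\varepsilon=u_M^\varepsilon$ for $|\varepsilon|$ below some threshold. We replace $\varepsilon_0$ by the minimum of the finitely many thresholds used in Steps 1--4.

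The only delicate point is keeping a single common $\varepsilon_0$. The identification threshold comes from a contradiction argument rather than an explicit bound, but it is still a fixed positive number, so taking the minimum is legitimate. I do not expect a genuine obstacle: the corollary is a bookkeeping consequence of the preceding results.
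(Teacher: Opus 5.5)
Your proposal is correct and follows the paper's own route. The paper gives no separate proof of this corollary; it is read off from the persistence theorem on $\mathbb S^2$, the mountain-pass corollary between the stable branches, and the identification proposition, assembled in the same order you use. Your sup-norm separation argument for pairwise distinctness and your choice of a single common $\varepsilon_0$ as the minimum of the thresholds make explicit details the paper leaves implicit.
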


\section{Numerical experiments}
\label{sec:numerical section}
\noindent
In this section, we present a numerical illustration of the one-dimensional degenerate problem studied in \autoref{sec: 1D degenerate}. The stationary equation is
\begin{equation}
-\nu \bigl((1-x^2)u'(x)\bigr)'
+\mathfrak e u(x)\abs{u(x)}^3
-Q_\varepsilon(x)\beta(u(x))
-q=0,
\qquad x\in(-1,1),
\label{eq:numerical-dimensional-stationary}
\end{equation}
where $\nu>0$ is the meridional heat-diffusion coefficient. The analytical
part corresponds to the normalised choice $\nu=1$. For any fixed
$\nu>0$, all the preceding arguments remain valid after replacing the
quadratic form $a_\kappa$ and the associated operator $A_\kappa$ by
$\nu a_\kappa$ and $\nu A_\kappa$, respectively. In particular, the
first positive eigenvalue is rescaled from $\lambda_1^\kappa$ to
$\nu\lambda_1^\kappa$. 

In this section, we assume the heterogeneous insolation of the form
\begin{equation}
Q_\varepsilon(x) = Q_* + \varepsilon Q_1(x), \qquad Q_1(x) = \frac{1}{2}(1-3x^2)
 = -P_2 (x),
\label{eq:numerical-insolation-parametrisation}
\end{equation}
where $P_2$ is the second Legendre polynomial, see \autoref{subsec:numerical-method-1d} for more properties. The use of the second
Legendre polynomial to describe the principal equator-to-pole variation of
annual-mean insolation is standard in 1D EBMs, see \cite{North1975,Bonetti2019}. The co-albedo is parametrised by the smooth logistic function (see 
\cite{Kooloth2024})
\begin{equation}
\beta(\theta  )
=
\beta_{\mathrm{low}}
+
\frac{\beta_{\mathrm{high}}-\beta_{\mathrm{low}}}
{1+\exp\!\left(-k_\beta(\theta  -\theta _\beta)\right)}.
\label{eq:numerical-coalbedo-parametrisation}
\end{equation}
 We choose the primitive of $\beta $ satisfying $B(0)=0$, namely
$$
B(\theta )
=
\beta_{\mathrm{low}}\theta 
+
\frac{\beta_{\mathrm{high}}-\beta_{\mathrm{low}}}{k_\beta}
\biggl[
\log\!\left(1+\exp\!\left(k_\beta(\theta -\theta _\beta)\right)\right)
-
\log\!\left(1+\exp(-k_\beta \theta _\beta)\right)
\biggr].
$$
The functional associated to \eqref{eq:numerical-dimensional-stationary} is
\begin{equation}
 G_{\varepsilon}(u) = \int_{-1}^1 \left( \frac{\mathfrak e}{5}\abs{u}^5 -Q_\varepsilon(x)B(u) -qu
\right)\d x + \frac \nu 2\int_{-1}^1(1-x^2)(u')^2 \d x.
\label{eq:numerical-dimensional-energy}
\end{equation}
In the following subsections, we describe how to approximate numerically the three stationary solutions $u_i^\varepsilon$, $i \in \{S,M,W\}$, whose existence and qualitative properties have been established in \autoref{sec: 1D degenerate}. The physical and model parameters used in the numerical experiments in this section are described in
\autoref{tab:numerical-model-parameters}.

\begin{table}[t]
\centering
\small
\renewcommand{\arraystretch}{1.15}
\begin{tabularx}{\textwidth}{@{}l X l l X@{}}
\hline
Symbol & Meaning & Value & Unit  \\
\hline
\(\nu\)
& Meridional diffusivity
& \(0.65\)
& \(\mathrm{W\,m^{-2}K^{-1}}\)
\\

\(\sigma_{\mathrm{SB}}\)
& Stefan--Boltzmann constant
& \(5.67\times10^{-8}\)
& \(\mathrm{W\,m^{-2}K^{-4}}\)
\\

\(\varepsilon_{\mathrm{em}}\)
& Emissivity constant
& \(0.61\)
& None
\\

\(Q_*\)
& Global-mean incoming solar flux
& \(340\)
& \(\mathrm{W\,m^{-2}}\)
\\

\(\bar\varepsilon\)
& Final heterogeneity amplitude
& \(20\)
& \(\mathrm{W\,m^{-2}}\)
\\

\(q\)
& Additional radiative forcing
& \(0\)
& \(\mathrm{W\,m^{-2}}\)
\\

\(\beta_{\mathrm{low}}\)
& Low-temperature co-albedo
& \(0.30\)
& None
\\

\(\beta_{\mathrm{high}}\)
& High-temperature co-albedo
& \(0.72\)
& None
\\

\(k_\beta\)
& Co-albedo transition steepness
& \(0.11\)
& \(\mathrm{K^{-1}}\)
\\

\(\theta _\beta\)
& Co-albedo transition temperature
& \(263\)
& \(\mathrm K\)
\\
\hline
\end{tabularx}
\caption{Physical and model parameters used in the numerical experiments.}
\label{tab:numerical-model-parameters}
\end{table}

\subsection{Legendre-Galerkin discretisation}
\label{subsec:numerical-method-1d}

Let $P_n$ denote the Legendre polynomial of degree $n$ defined recursively by
\begin{equation}
\left \lbrace 
\begin{aligned}
P_0 (x) & = 1, \\
P_1(x)&=x,\\
P_{n+1}(x)
&=
\frac{2n+1}{n+1}xP_n(x)-\frac{n}{n+1}P_{n-1}(x),
\qquad n\geq1.
\label{eq:legendre-three-term-recurrence}
\end{aligned}
\right.
\end{equation}
For the basic properties of Legendre polynomials and their use in spectral
Galerkin methods, we refer to \cite{Shen2011}.
Let $N \geq 1$. We approximate the stationary temperature by
\begin{equation}
u_N(x)=\sum_{n=0}^N c_nP_n(x).
\label{eq:numerical-legendre-expansion}
\end{equation}
The Legendre basis is particularly well suited to the degenerate diffusion
operator in \eqref{eq:numerical-dimensional-stationary}, since
\begin{equation}
-\bigl((1-x^2)P_n'(x)\bigr)'
=
n(n+1)P_n(x),
\qquad n\geq0,
\label{eq:legendre-degenerate-eigenfunctions}
\end{equation}
and
$$
\int_{-1}^1P_n(x)P_m(x)\d x
=
M_n\delta_{nm},
\qquad
M_n=\frac{2}{2n+1}.
$$
Consequently, the diffusion term is diagonal in Legendre coefficient space.
Substituting \eqref{eq:numerical-legendre-expansion} into
\eqref{eq:numerical-dimensional-stationary} and testing the resulting
equation against $P_m$, for $m=0,\ldots,N$, gives the following system of $N+1$
non-linear equations for the $N+1$ coefficients
$c=(c_0,\ldots,c_N)^\top$,
\begin{equation}
\begin{split}
0=R_m(c) :={}& \nu  m(m+1)M_m c_m + \int_{-1}^1 \left(
\mathfrak e u_N\abs{u_N}^3 -Q_\varepsilon(x)\beta(u_N) -q \right)P_m(x)\d x, \qquad 0\leq m\leq N.
\end{split}
\label{eq:numerical-galerkin-system}
\end{equation}
Writing
$
\mathbf R(c) := \bigl(R_0(c),\ldots,R_N(c)\bigr)^\top \in \R^{N+1},
$
the discrete stationary problem in \eqref{eq:numerical-galerkin-system} is therefore reduced to the finite-dimensional
non-linear system
\begin{equation}
\mathbf R(c)=0
\qquad\text{in }\mathbb R^{N+1}.
\label{eq:numerical-vector-residual}
\end{equation}
We approximate the integrals in
\eqref{eq:numerical-galerkin-system} and in \eqref{eq:numerical-dimensional-energy}, by the
$N_q$-point Gauss-Legendre quadrature rule, see \cite[Chapter 1.4]{Gautschi2004}. This rule is exact for polynomials of degree at most $2N_q -1$. More precisely, denoting by
$((x_\ell,w_\ell)_{\ell=1}^{N_q})$ the Gauss-Legendre nodes and weights, then
$$
\int_{-1}^1 f(x)\d x \approx \sum_{\ell=1}^{N_q}w_\ell f(x_\ell).
$$
The values of the Legendre basis functions at the quadrature nodes are precomputed in the 
Vandermonde matrix
$$
V_{\ell n}=P_n(x_\ell), \qquad \ell=1,\ldots,N_q, \quad n=0,\ldots,N.
$$
Hence, if $c=(c_0,\ldots,c_N)^\top$, the vector of nodal values of the
Galerkin approximation is
$$
\bigl(u_N(x_1),\ldots,u_N(x_{N_q})\bigr)^\top=Vc.
$$
To write the quadrature-based residual compactly, introduce
$$
M:=\operatorname{diag}(M_0,\ldots,M_N), \qquad \Lambda:=\operatorname{diag}\bigl(0,2,\ldots,N(N+1)\bigr), \qquad 
W:=\operatorname{diag}(w_1,\ldots,w_{N_q}).
$$
For a nodal vector $T=(T_1,\ldots,T_{N_q})^\top$, define the non-linear
radiative imbalance
$$
\mathcal F_\varepsilon(T)_\ell := \mathfrak e T_\ell\abs{T_\ell}^3
-Q_\varepsilon(x_\ell)\beta(T_\ell)-q.
$$
Thus \eqref{eq:numerical-vector-residual} can be compactly rewritten as
\begin{equation}
\mathbf R(c) = \nu  M\Lambda c + V^\top W\mathcal F_\varepsilon(Vc) =0.
\label{eq:numerical-residual-matrix-form}
\end{equation}

The non-linear system \eqref{eq:numerical-residual-matrix-form} is solved by
a Newton method with backtracking, see for instance \cite[Chapter 6]{Dennis1996}.
Given a current approximation \(c^{(k)}\), the Newton correction
\(\delta^{(k)}\) is obtained by solving
\begin{equation}
J\bigl(c^{(k)}\bigr)\delta^{(k)}
=
-\mathbf R\bigl(c^{(k)}\bigr),
\label{eq:numerical-newton-linear-system}
\end{equation}
where
$
J(c):=D \mathbf{R}(c) = \nu M \Lambda + V^\top W D_\varepsilon(V c) V
$ is the Jacobian matrix of the discrete residual. Here $D_\varepsilon(T)$ is the diagonal matrix defined by
$$
\bigl[ D_\varepsilon(T) \bigr ]_{ \ell \ell} = 4 \mathfrak{e} |T_\ell|^3 - Q_\varepsilon(x_\ell)\beta'(T_\ell).
$$
The
coefficient vector is then updated according to
\begin{equation}
c^{(k+1)} = c^{(k)}+\alpha_k\delta^{(k)}, \qquad 0<\alpha_k\leq1.
\label{eq:numerical-damped-newton-update}
\end{equation}
In the implementation, $\alpha_k$ is selected by a backtracking line search:
the full Newton step $\alpha_k=1$ is tried first and, if it does not decrease
the Euclidean residual norm, the step length is successively halved until
$$
\left| \mathbf R\bigl(c^{(k)}+\alpha_k\delta^{(k)}\bigr) \right|_2 < \left|\mathbf R\bigl(c^{(k)}\bigr)\right|_2.
$$

\subsection{Continuation, discrete energy, and Morse-index computation}
\label{subsec:numerical-continuation-energy-morse}
At $\varepsilon=0$, the insolation is spatially homogeneous,
$Q_\varepsilon\equiv Q_*$, and the constant stationary states are determined
by the scalar radiative-balance equation
$$
\mathfrak e\theta\abs{\theta}^3 -Q_*\beta(\theta)-q=0.
$$
For the parameters considered in \autoref{tab:numerical-model-parameters}, this equation has three positive roots $
\theta_S<\theta_M<\theta_W,$ corresponding to the cold, middle, and warm homogeneous states. Since
$P_0\equiv1$, their Legendre coefficient vectors are
$$
c_i^{(0)}=(\theta_i,0,\ldots,0)^\top, \qquad i\in\{S,M,W\}.
$$
Starting from these states, we perform a parameter continuation in
$\varepsilon$. Given a final value $\bar\varepsilon>0$, we introduce the
uniform mesh
$
\varepsilon_j=\frac{j}{K}\bar\varepsilon,$ for $ j=0,\ldots,K.$ For each branch $i\in\{S,M,W\}$, the solution $c_i^{(j-1)}$ computed at
$\varepsilon_{j-1}$ is used as the initial guess for the damped Newton
method applied to
\begin{equation}
\mathbf R_{\varepsilon_j}(c)=0.
\label{eq:numerical-continuation-corrector}
\end{equation}
The resulting solution $c_i^{(j)}$ is then used at the next continuation
step. Thus, the previous branch state acts as a zeroth-order predictor and
Newton's method as a corrector. If the corrector fails, the continuation mesh
must be refined.

For $c=(c_0,\ldots,c_N)^\top$, the discrete  energy is
\begin{equation}
\begin{split}
 G_{\varepsilon,N}(c) :={}& \frac{\nu}{2} \sum_{n=0}^N n(n+1)M_n c_n^2 + \sum_{\ell=1}^{N_q}w_\ell
\left[ \frac{\mathfrak e}{5}\abs{(Vc)_\ell}^5-Q_\varepsilon(x_\ell)B\bigl((Vc)_\ell\bigr)
-q(Vc)_\ell \right].
\end{split}
\label{eq:numerical-discrete-energy}
\end{equation}
The diffusion contribution, equivalently written as
$\frac{\nu}{2}c^\top M\Lambda c$, is evaluated exactly in coefficient
space, while the potential term is computed by Gauss--Legendre quadrature.
The energy is evaluated at the three final stationary states in order to
compare their energy levels.

For a stationary coefficient vector $c$, let $T=Vc$ denote the
corresponding values of $u_N$ at the quadrature nodes. The Hessian matrix
of the discrete energy \eqref{eq:numerical-discrete-energy} is
\begin{equation}
H_\varepsilon(c) = \nu M\Lambda + V^\top W D_\varepsilon(T)V,
\label{eq:numerical-discrete-hessian}
\end{equation}
where $D_\varepsilon(T)$ is the diagonal matrix with entries
$$
\bigl[D_\varepsilon(T)\bigr]_{\ell\ell} = 4\mathfrak e\abs{T_\ell}^3 -Q_\varepsilon(x_\ell)\beta'(T_\ell).
$$
To determine the stability of the stationary state, we consider the second
variation of the energy on the finite-dimensional space
$
X_N:=\operatorname{span}\{P_0,\ldots,P_N\}.$ More precisely, we look for $v_N\in X_N\setminus\{0\}$ and
$\lambda\in\mathbb R$ such that
\begin{equation}
\bigl( G_{\varepsilon}\bigr)'' (u_N)[v_N,w_N] = \lambda (v_N,w_N)_{\mathrm L^2(-1,1)} \qquad \text{for every }w_N\in X_N.
\label{eq:numerical-variational-eigenproblem}
\end{equation}
Writing
$
v_N=\sum_{n=0}^N z_nP_n,$ and $ w_N=\sum_{n=0}^N y_nP_n,
$ then \eqref{eq:numerical-variational-eigenproblem} takes the form
$$
y^\top H_\varepsilon(c)z =  \lambda y^\top Mz,
$$
Since this identity must hold for every coefficient vector $y$, the
spectral problem reduces to
\begin{equation}
H_\varepsilon(c)z = \lambda Mz.
\label{eq:numerical-generalised-hessian}
\end{equation}
The numerical Morse index is the number of eigenvalues of
\eqref{eq:numerical-generalised-hessian} satisfying
$
\lambda<-\tau_{\mathrm{Morse}}.
$

\subsection{Numerical results}
\label{subsec:numerical-results-1d}
In this section we illustrate the continuation of the three homogeneous
stationary states under the introduction of spatially heterogeneous
insolation. In addition to the final temperature profiles, we examine the departure
of each branch from its homogeneous state, the spectrum of the discrete
Hessian, and the geometry of the Galerkin energy on a two-dimensional
coefficient-space section.

\autoref{fig:numerics-model} collects the main ingredients of the model.
Panel (a) shows the heterogeneous insolation
$Q_\varepsilon(x)=Q_* + \varepsilon Q_1(x)$, which is larger near the equator
and smaller near the poles for the choice of $Q_1$ considered here.
Panel (b) displays the smooth co-albedo function $\beta$, whose transition
between its low- and high-temperature regimes produces the non-monotone
radiative balance shown in panel~(c). The intersections of the outgoing and
absorbed radiation curves are precisely the three homogeneous equilibria
$
\theta_S<\theta_M<\theta_W.
$
These states are the critical points of the homogeneous scalar
potential
$
R(\theta ) = \frac{\mathfrak e}{5}\abs{\theta }^5 -Q_* B(\theta )-q\theta ,
$
displayed in panel~(d). The cold and warm equilibria correspond to its two
local minima, while the middle equilibrium lies at the intervening
local maximum.

\begin{figure}[t]
    \centering

    \begin{subfigure}[t]{0.35\textwidth}
        \centering
        \includegraphics[width=\linewidth]
        {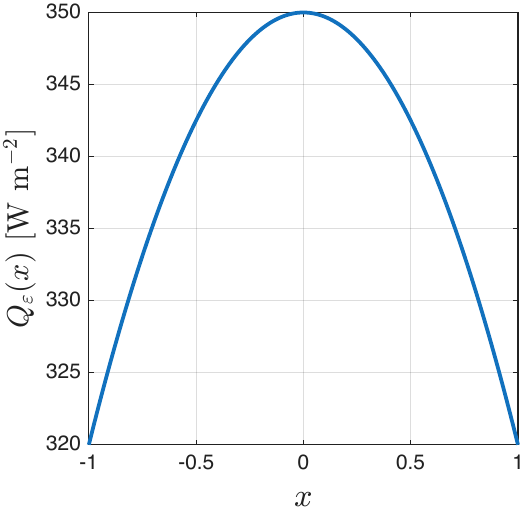}
        \caption{Insolation $Q_\varepsilon (x)$}
        \label{fig:numerics-model-insolation}
    \end{subfigure}
    \begin{subfigure}[t]{0.35\textwidth}
        \centering
        \includegraphics[width=\linewidth]
        {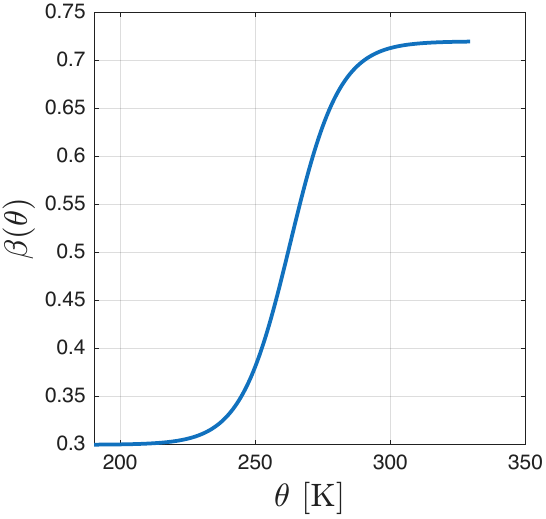}
        \caption{Co-albedo $\beta(\theta )$}
        \label{fig:numerics-model-coalbedo}
    \end{subfigure}

    \begin{subfigure}[t]{0.35\textwidth}
        \centering
        \includegraphics[width=\linewidth]
        {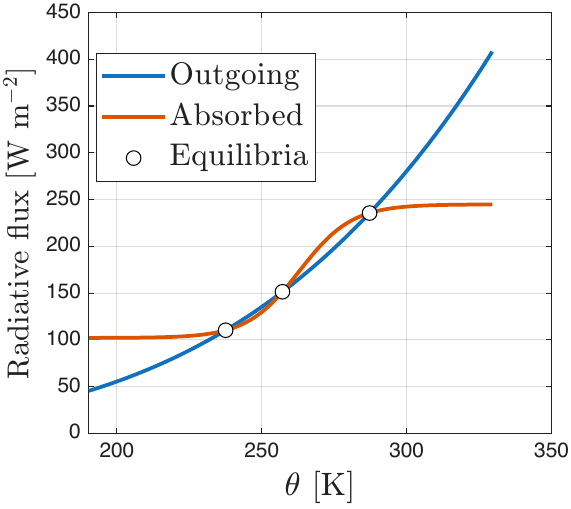}
        \caption{Radiation balance}
        \label{fig:numerics-model-balance}
    \end{subfigure}
    \begin{subfigure}[t]{0.35\textwidth}
        \centering
        \includegraphics[width=\linewidth]
        {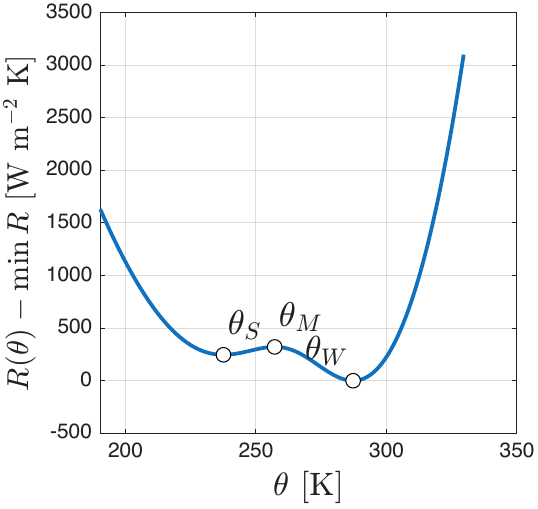}
        \caption{Shifted scalar potential $R$}
        \label{fig:numerics-model-potential}
    \end{subfigure}
    \caption{Model parametrisation and homogeneous three-climate structure. (A) Heterogeneous insolation $Q_\varepsilon (x)$ (B) Co-albedo $\beta (\theta )$ (C) Outgoing and absorbed radiation in the homogeneous problem. The intersections of the curves determine the three homogeneous
    equilibria $\theta_S<\theta_M<\theta_W$, which correspond to the critical
    points of the potential (D) Homogeneous scalar potential $R$, shifted by its minimum }
    \label{fig:numerics-model}
\end{figure}

Starting from the coefficient vectors associated with
$\theta_S$, $\theta_M$, and $\theta_W$, the continuation procedure described
in \autoref{subsec:numerical-continuation-energy-morse} produces the three
heterogeneous stationary states displayed in
\autoref{fig:numerics-profiles-and-graph-norm}\subref{fig:numerics-profiles}.
The profiles
$u_S^{\overline\varepsilon}$, $u_M^{\overline\varepsilon}$, and $u_W^{\overline\varepsilon}$
represent, respectively, the continued cold, middle, and warm climate
configurations corresponding to the three stationary solutions obtained in
\autoref{cor: corllary 1D tri-stability}.

\autoref{fig:numerics-graph-norm} shows the plot of $\varepsilon \mapsto \| u_i^{\varepsilon} - \theta_i\|_{D(A_\kappa)}$ for $i = \{S,M,W\}.$  The approximately linear growth observed is
consistent with the implicit-function construction underlying
\autoref{thm: existence three climate degenerate small eps}. Indeed, the
resulting $\rC^1$ branches admit the expansion
$$
u_i^\varepsilon = \theta_i+\varepsilon\phi_i+o(\varepsilon) \quad\text{in }D(A_\kappa),
$$
and hence $
\|u_i^\varepsilon-\theta_i\|_{D(A_\kappa)} = \varepsilon \|\phi_i\|_{D(A_\kappa)}+o(\varepsilon)
$

\begin{figure}[t]
    \centering

    \begin{subfigure}[t]{0.48\textwidth}
        \centering
        \includegraphics[width=\linewidth]
        {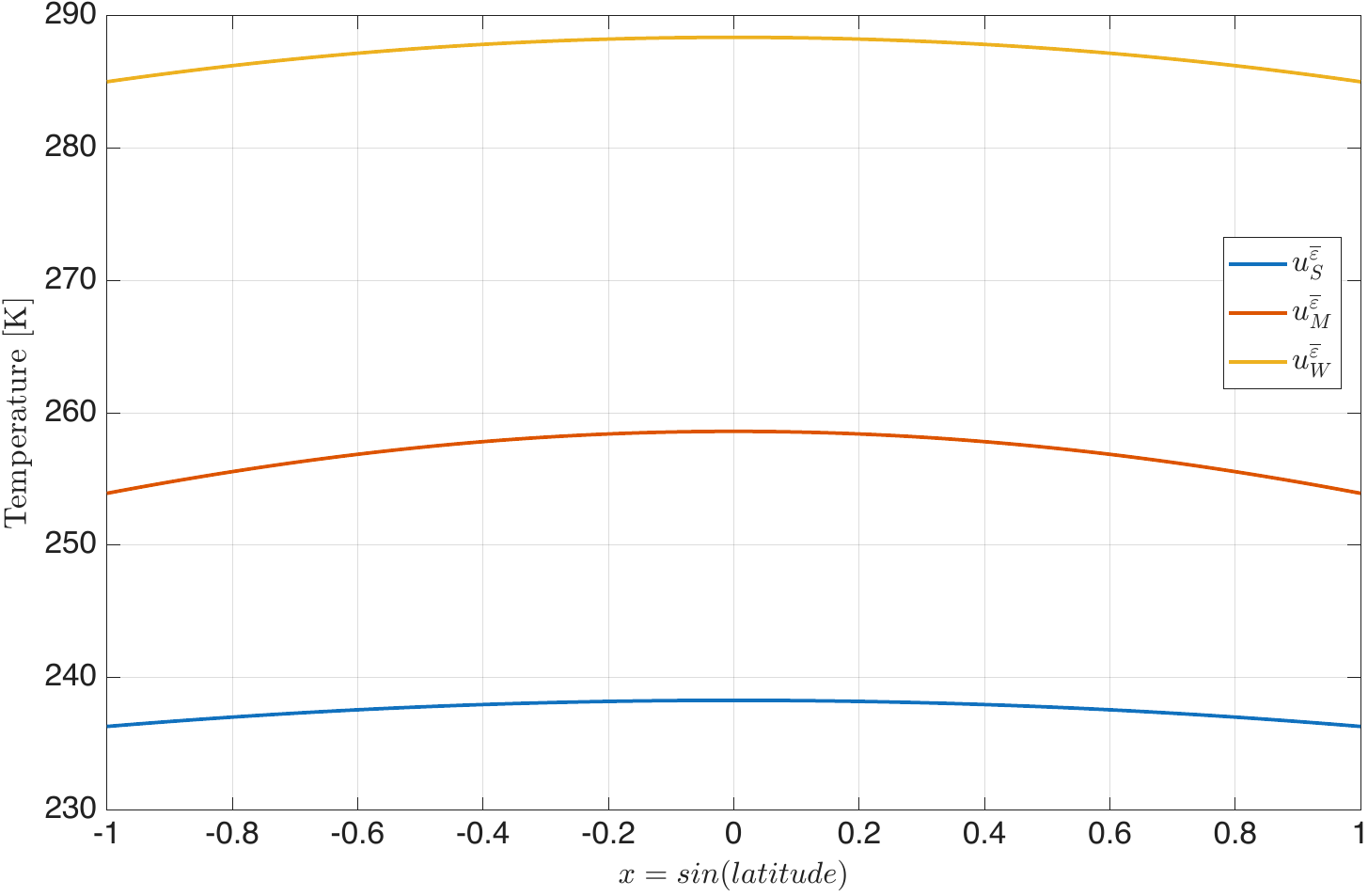}
        \caption{Stationary EBM solutions for $\overline \varepsilon = 20$}
        \label{fig:numerics-profiles}
    \end{subfigure}
    \hfill
    \begin{subfigure}[t]{0.48\textwidth}
        \centering
        \includegraphics[width=\linewidth]
        {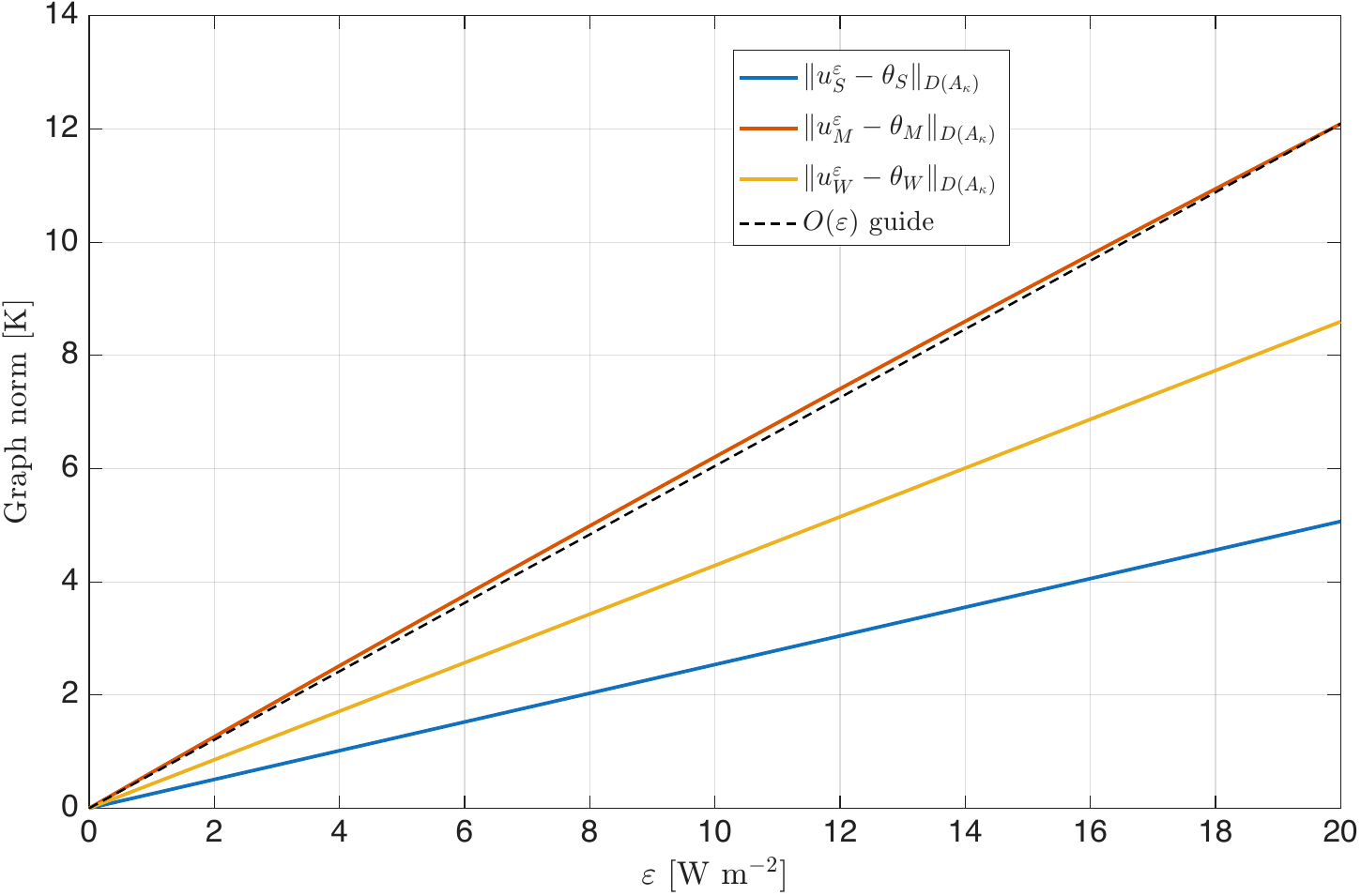}
        \caption{Map $\varepsilon \mapsto \| u_{i}^\varepsilon -\theta_i \|_{D(A_\kappa)}$, $i \in \{S,M,W\}$}
        \label{fig:numerics-graph-norm}
    \end{subfigure}

    \caption{Continuation of the three homogeneous climate states under
    heterogeneous insolation. (A) Stationary temperature profiles for $\varepsilon = 20$ (B) Graph norm distance of $u_i^\varepsilon$ from the corresponding homogeneous equilibrium $\theta_i$, i.e. $\| u_i^\varepsilon - \theta_i \|_{D(A_\kappa)} $ for $i \in \{S,M, W\}$. The dashed
    line is an
    $\mathcal O(\varepsilon)$ visual guide.}
    \label{fig:numerics-profiles-and-graph-norm}
\end{figure}

\autoref{fig:numerics-hessian} shows the lowest eigenvalues of the
discrete Hessian along the three branches, computed through
\eqref{eq:numerical-generalised-hessian} and the procedure described in \autoref{subsec:numerical-continuation-energy-morse}. Along the cold and warm branches,
the lowest eigenvalue remains positive throughout the displayed
continuation interval. Along the middle branch, the first eigenvalue
remains negative and the second remains positive. The resulting discrete
Morse-indices are thus $0,$ $1$ and $0$, for the cold, middle, and warm states, respectively. 

\begin{figure}[t]
    \centering

    \begin{subfigure}[t]{0.28\textwidth}
        \centering
        \includegraphics[width=\linewidth]
        {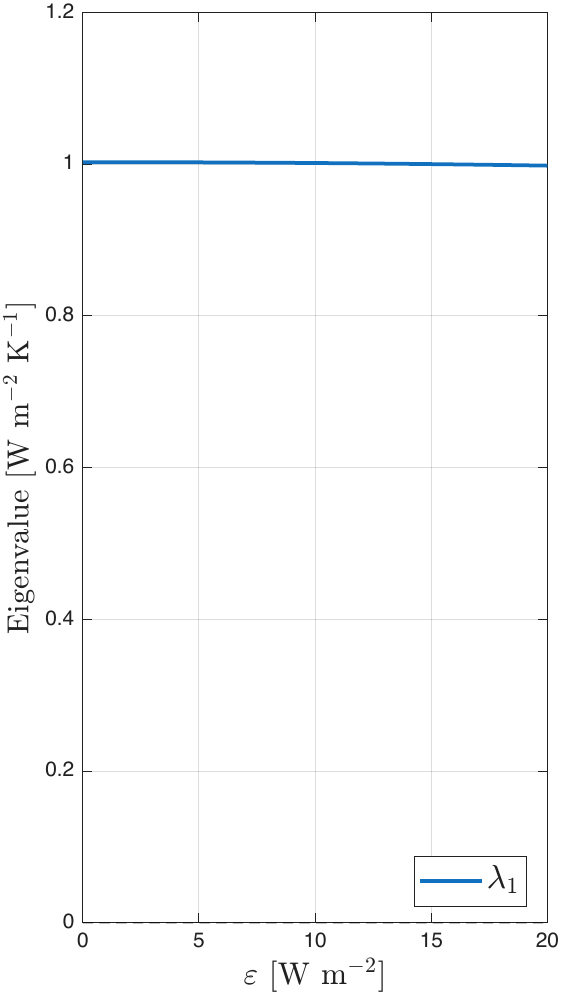}
        \caption{Cold branch.}
        \label{fig:numerics-hessian-cold}
    \end{subfigure}
    \begin{subfigure}[t]{0.27\textwidth}
        \centering
        \includegraphics[width=\linewidth]
        {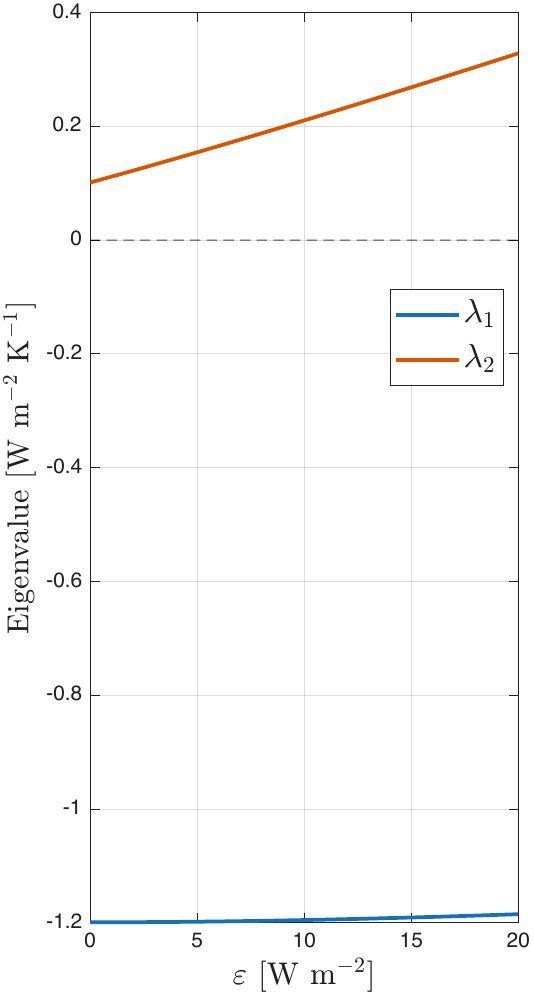}
        \caption{Middle branch.}
        \label{fig:numerics-hessian-middle}
    \end{subfigure}
    \begin{subfigure}[t]{0.27\textwidth}
        \centering
        \includegraphics[width=\linewidth]
        {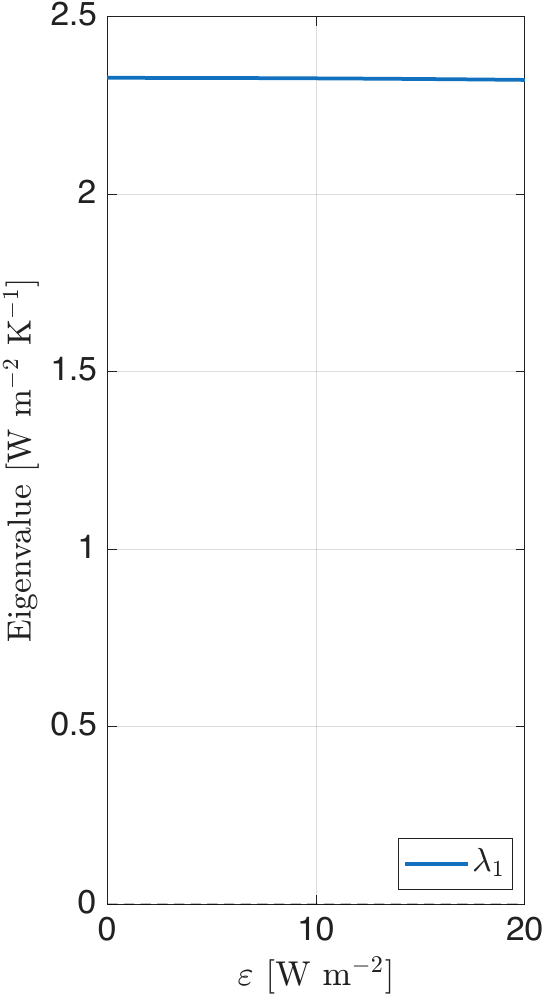}
        \caption{Warm branch.}
        \label{fig:numerics-hessian-warm}
    \end{subfigure}

    \caption{Smallest eigenvalues of the discrete Hessian along
    the three continued stationary branches. Their signs give the numerical check that the Morse-indices are $(0,1,0)$ along the branches $\varepsilon \mapsto (u_S^\varepsilon, u_M^\varepsilon, u_W^\varepsilon)$ (A) Smallest eigenvalue along the cold branch (B) Two smallest eigenvalues along the middle branch (C) Smallest eigenvalue along the warm branch.}
    \label{fig:numerics-hessian}
\end{figure}

Finally, \autoref{fig:numerics-energy-abstract} displays the complete
Galerkin energy \eqref{eq:numerical-discrete-energy} for $\varepsilon = {\overline\varepsilon}$ restricted to the
two-dimensional affine section
$$
c(a,b)=c_S^{\overline\varepsilon}+a e_1+b e_2.
$$
Let $
\langle c,d\rangle_M:=c^\top Md,$ $
\|c\|_M:=\langle c,c\rangle_M^{1/2},$ where we recall that $M=\mathrm{diag}(M_0,\ldots,M_N)$ is the Legendre mass matrix.
The first direction is the $M$-normalised cold-to-warm displacement,
$$
e_1 := \frac{c_W^{\overline\varepsilon}-c_S^{\overline\varepsilon}} {\|c_W^{\overline\varepsilon}-c_S^{\overline\varepsilon}\|_M}.
$$
Writing $v_M:=c_M^{\overline\varepsilon}-c_S^{\overline\varepsilon}$, the second direction is
obtained by subtracting from $v_M$ its $M$-orthogonal projection onto
$e_1$:
$$
\widetilde{e}_2 := v_M-\langle v_M,e_1\rangle_M e_1, \qquad e_2 := \frac{\widetilde e_2}{\|\widetilde{e}_2\|_M}.
$$
Thus $e_1$ and $e_2$ are $M$-orthonormal, and, whenever
$\widetilde e_2\neq0$, the section coincides with the affine plane
$$
c_S^{\overline\varepsilon} +\mathrm{span}\{ c_M^{\overline\varepsilon}-c_S^{\overline\varepsilon},\,
c_W^{\overline\varepsilon}-c_S^{\overline\varepsilon} \}
$$
generated by the three computed stationary states.

\begin{figure}[t]
\centering
\includegraphics[width=.7\textwidth]{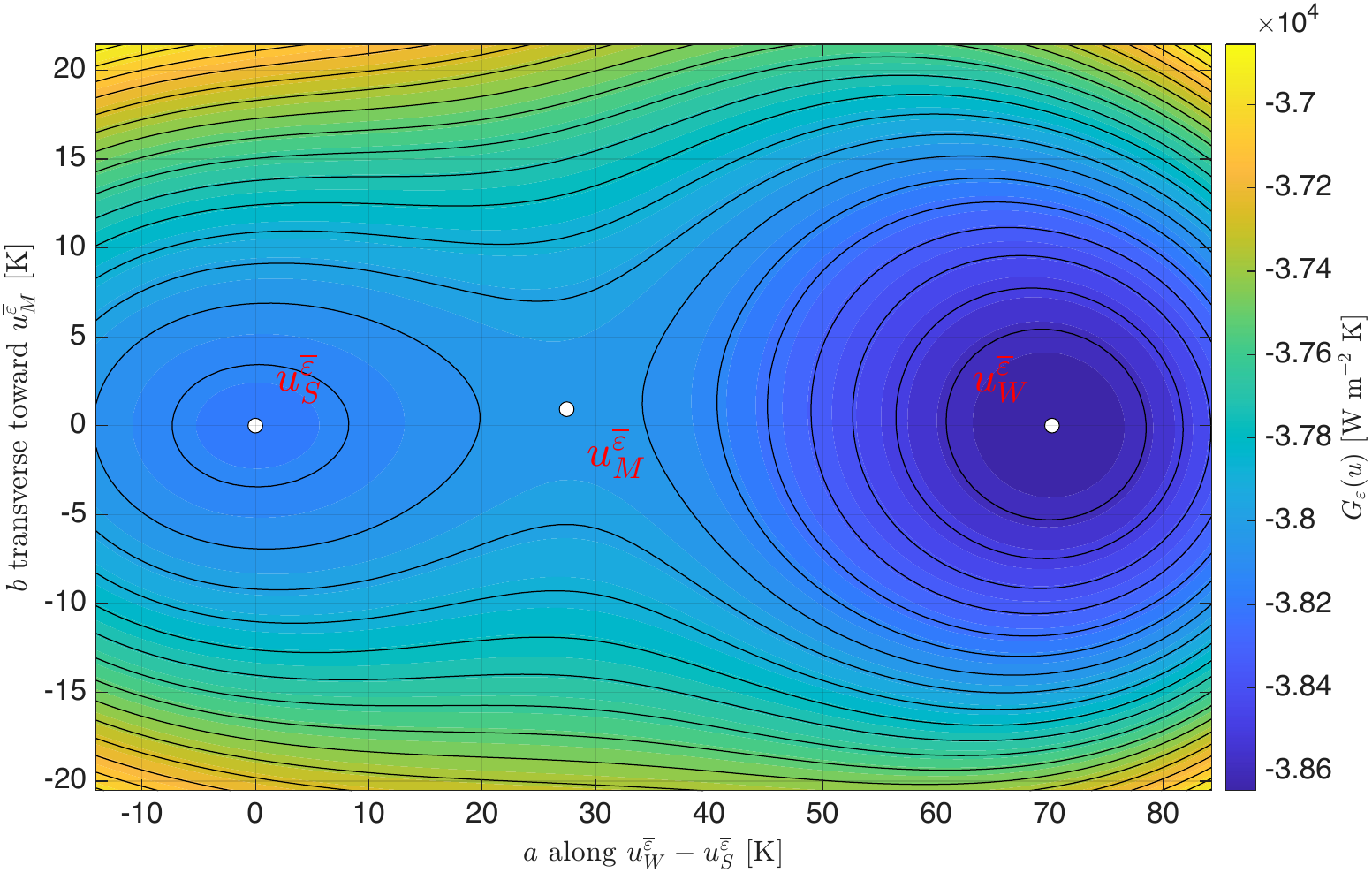}
\caption{The discrete Galerkin energy $ G_{\overline \varepsilon,N}$ restricted to the abstract affine
section $c_S^{\overline\varepsilon}+\operatorname{span}\{e_1,e_2\}$. The marked points are the cold, middle, and warm stationary states.}
\label{fig:numerics-energy-abstract}
\end{figure}

{\bf Data availability.} The MATLAB code (version R2026a) used to perform the numerical experiments
presented in \autoref{sec:numerical section} is publicly available on Zenodo at \url{https://doi.org/10.5281/zenodo.23622497}.

{\bf Acknowledgements. }{\small G.D.S. acknowledges support from the DFG project FOR~5528. Part of this research was carried out during visits of G.D.S. to La Sapienza in Rome, whose generous support and hospitality are gratefully acknowledged.
}

\bibliographystyle{plain}
\bibliography{biblio}

\end{document}